\newcommand{\bydef}{:=}
\newcommand{\id}{\mathrm{id}}%identity map
\newcommand{\proj}{\mathrm{pr}}%projection map
\DeclareMathOperator{\rank}{\mathrm{rank}} %rank
\newcommand{\bi}{\mathbf{i}}%imaginary unit
\newcommand{\cA}{\mathcal{A}}%algebras
\newcommand{\cH}{\mathcal{H}}
\newcommand{\cL}{\mathcal{L}}
\newcommand{\cT}{\mathcal{T}}
\newcommand{\frg}{{\mathfrak g}}%Lie algebras
\newcommand{\ZZ}{\mathbb{Z}}
\newcommand{\FF}{\mathbb{F}} 
\DeclareMathOperator{\rad}{\mathrm{rad}}%nilradical
\DeclareMathOperator{\Hom}{\mathrm{Hom}}
\DeclareMathOperator{\im}{\mathrm{im}\,}
\DeclareMathOperator{\Aut}{\mathrm{Aut}}%automorphism group
\DeclareMathOperator{\Stab}{\mathrm{Stab}}%stabilizer
\DeclareMathOperator{\Diag}{\mathrm{Diag}}
\DeclareMathOperator{\supp}{\mathrm{Supp}}
\newcommand{\frsl}{{\mathfrak{sl}}}
\newcommand{\GL}{\mathrm{GL}} 
\newcommand{\SL}{\mathrm{SL}}
\newcommand{\Ort}{\mathrm{O}}
\newcommand{\SP}{\mathrm{Sp}}
\DeclareFontFamily{U}{matha}{\hyphenchar\font45}
\DeclareFontShape{U}{matha}{m}{n}{
      <5> <6> <7> <8> <9> <10> gen * matha
      <10.95> matha10 <12> <14.4> <17.28> <20.74> <24.88> matha12
      }{}
\DeclareSymbolFont{matha}{U}{matha}{m}{n}
\DeclareFontFamily{U}{mathx}{\hyphenchar\font45}
\DeclareFontShape{U}{mathx}{m}{n}{
      <5> <6> <7> <8> <9> <10>
      <10.95> <12> <14.4> <17.28> <20.74> <24.88>
      mathx10
      }{}
\DeclareSymbolFont{mathx}{U}{mathx}{m}{n}
\DeclareMathSymbol{\obot}         {2}{matha}{"6B}
\DeclareMathSymbol{\bigobot}       {1}{mathx}{"CB}
\newenvironment{romanenumerate} 
{\begin{enumerate}

}{\end{enumerate}}
\newcommand{\buno}{\mathbf{1}}
\newcommand{\oR}{\overline{R}}
\newcommand{\oE}{\overline{E}}
\newcommand{\oF}{\overline{F}}
\newcommand{\oW}{\overline{W}}
\newcommand{\oG}{\overline{G}}
\newcommand{\oH}{\overline{H}}
\newcommand{\oT}{\overline{T}}
\newcommand{\oGamma}{\overline{\Gamma}}
\newcommand{\opi}{\overline{\pi}}
\DeclareMathOperator{\Arf}{\mathrm{Arf}}
\newcommand{\bcero}{\mathbf{0}}
\newcommand{\bc}{\mathbf{c}}
\newcommand{\cF}{\mathcal{F}}
\DeclareMathOperator{\Inv}{\mathrm{Inv}}
\newtheorem{theorem}{Theorem}[section]
\newtheorem{proposition}[theorem]{Proposition}
\newtheorem{lemma}[theorem]{Lemma}
\newtheorem{corollary}[theorem]{Corollary}
\theoremstyle{definition} 
\newtheorem{definition}[theorem]{Definition}
\newtheorem{example}[theorem]{Example}
\theoremstyle{remark} \newtheorem{remark}[theorem]{Remark}
\numberwithin{equation}{section}
\def\bigstrut{\vrule height 13pt width 0ptdepth 5pt}
\begin{document}

\title%[]%
{Special pure gradings on simple Lie algebras of types $E_6$, $E_7$, $E_8$}

\author[C.~Draper]{Cristina Draper}
\address[C.D.]{Departamento de \'Algebra, Geometr{\'\i}a y Topolog{\'\i}a,
Universidad de M\'alaga, 29071 M\'alaga, Spain}
\email{cdf@uma.es}
\thanks{C.D. has been supported  by the Spanish Ministerio de Ciencia e Innovaci\'on   through 
project  PID2023-152673NB-I00 and by the Junta de Andaluc\'{\i}a  through project  PPRO-FQM336-G-2023, 
 all of them with FEDER funds }

\author[A.~Elduque]{Alberto Elduque} 
\address[A.E.]{Departamento de
Matem\'{a}ticas e Instituto Universitario de Matem\'aticas y
Aplicaciones, Universidad de Zaragoza, 50009 Zaragoza, Spain}
\email{elduque@unizar.es} 
\thanks{A.E. has been supported by grant
PID2021-123461NB-C21, funded by 
MCIN/AEI/10.13039/ 501100011033 and by
 ``ERDF A way of making Europe''; and by grant 
E22\_20R (Gobierno de Arag\'on).}

\author[M.~Kochetov]{Mikhail Kochetov}
\address[M.K.]{Department of Mathematics and Statistics, 
	Memorial University of Newfoundland, St.~John's, NL, A1C5S7, Canada}
\email{mikhail@mun.ca}
\thanks{M.K. has been supported by Discovery Grant 2018-04883 of the Natural Sciences and Engineering Research Council (NSERC) of Canada.}

\subjclass[2020]{Primary 17B70; Secondary 17B25; 17B40}

\keywords{Graded algebras, exceptional Lie algebras, pure gradings, special gradings, Weyl group, quadratic form, Arf invariant}

%\date{}

\begin{abstract}
A group grading on a semisimple Lie algebra over an 
algebraically closed field of characteristic zero is 
\emph{special} if its identity component is zero; it is 
\emph{pure} if at least one of its components, other than the identity 
component, contains a Cartan subalgebra.
We classify special pure gradings on Lie algebras of types 
$E_6$, $E_7$, $E_8$ up to equivalence and up to isomorphism. To 
this end, we use quadratic forms over the field of two 
elements to show that there are exactly three equivalence 
classes for $E_6$, four for $E_7$, and five for $E_8$. The 
computation of the corresponding Weyl groups
and their actions on the universal groups yields a set of 
invariants that allow us to distinguish the isomorphism classes.
\end{abstract}

\maketitle

\section{Introduction}

Since the emergence of the theory of Lie algebras in the late 
19th and early 20th centuries, gradings have been a key tool 
for studying the algebras themselves as well as their 
representations. 
After the classical root space decomposition of any semisimple 
Lie algebra---which provides a grading over the root lattice 
associated with a Cartan subalgebra---the study of gradings on 
Lie algebras has developed into a rich area of research. Over 
the past several decades, the most frequently occurring 
gradings in the literature have been those by the cyclic groups 
\( \mathbb{Z}_2 \) and \( \mathbb{Z} \):
\( \mathbb{Z}_2 \)-gradings appear prominently in the theory of 
symmetric spaces in Differential Geometry (see the book 
\cite{helgason}),  
while 
\( \mathbb{Z} \)-gradings naturally arise in filtered Lie 
algebras, in the context of Kac-Moody algebras, and in the 
study of the Virasoro and Heisenberg algebras. 
For other infinite-dimensional Lie algebras, gradings by 
$\ZZ^n$ as well as other finitely generated abelian groups help 
in constructing and classifying deformations as well as in the 
study of their representations.

 Patera and Zassenhaus \cite{PaZa} initiated the task of 
 systematically developing  the theory of gradings on Lie 
 algebras from an algebraic perspective. They formulated the 
 problem of determining fine gradings in addition to the root 
 space decomposition. 
 For a group $G$, a $G$-grading on an algebra $\cA$ is a 
 decomposition, $\cA=\oplus_{g\in G}\cA_g$, into direct sum of 
 subspaces, labeled by the elements of $G$ and called 
 \emph{homogeneous components}, such that the product in the 
 algebra agrees with the group operation: 
 $\cA_g\cA_h\subset\cA_{gh}$. A grading is called \emph{fine} if its 
 homogeneous components are as small as possible, i.e., it does 
 not have a proper refinement.
 
 The classification of fine gradings on simple Lie algebras 
 over algebraically closed fields of characteristic zero became 
 a major milestone, ultimately compiled in the monograph 
 \cite{EKmon}. This book presents many classification results for 
 gradings on simple Lie algebras, due to the contributions of 
 many authors (see the references in \cite{EKmon} and, in 
 addition, \cite{YuEs}).
 More precisely, one can consider two types of classification 
 for gradings: up to equivalence and up to isomorphism.
 Two gradings on $\cA$ are said to be \emph{equivalent} (respectively, 
 \emph{isomorphic}) if there exists an automorphism of $\cA$ that maps 
 each component of the first grading onto some component 
 (respectively, the component with the same label) of the 
 second grading.
 The classification up to isomorphism is particularly relevant 
 in applications where the specific labeling of components 
 plays a role, as is the case when the group $G$ carries some 
 additional structure such as the commutation factor in the 
 theory of Lie color algebras.

Since any grading can be obtained by joining some of the 
components of a fine grading through a process known as 
\emph{coarsening}, the classification of fine gradings up to equivalence 
can serve as the starting point for a complete classification of gradings. 
 However, implementing this in practice is far from 
 straightforward, one difficulty being that the same grading 
 may arise as a coarsening of several nonequivalent fine 
 gradings. 
Indeed, the classification of all gradings up to isomorphism 
for classical simple Lie algebras was obtained in a different 
way, namely, by transferring the problem to certain associative 
algebras and then using the structure theory of graded 
associative algebras. The classification of all gradings up to 
isomorphism for the exceptional Lie algebras $G_2$ and $F_4$ 
was derived from fine gradings by ad hoc arguments, but the 
situation is far more complex for $E_6$, $E_7$ and $E_8$ (each having 
$14$ nonequivalent fine gradings), so the problem remains open 
for these types, even over algebraically closed fields of 
characteristic zero. This is a significant problem that should 
be resolved, since the classification of graded-simple finite-dimensional 
Lie algebras would follow from the classification 
of gradings on all simple finite-dimensional Lie algebras via 
the so-called loop construction (see \cite{loopconstruction}). 
It should also be noted that the classification of all gradings 
on classical simple Lie superalgebras (with nonzero odd part) 
is now known \cite{H_thesis,AKY,HK}, so completing the $E$ 
types would also yield the classification of simple Lie color 
algebras via the so-called discoloration of Scheunert.  

Due to the non-uniqueness issue mentioned above, the 
classification of gradings up to isomorphism has not yet been 
attempted for the $E$ types. A new approach was recently 
suggested in \cite{EK_almostfine}, using what the authors 
called \emph{almost fine gradings}. The definition is not 
immediately intuitive, but the idea is that a grading is almost 
fine if it cannot be refined while keeping the same toral rank 
(for instance, any fine grading is almost fine). 
The advantage of almost fine gradings is that any $G$-grading 
can be obtained by coarsening from an almost fine grading in an 
essentially unique way.
To obtain a classification of $G$-gradings using this approach, one 
needs a list of the equivalence classes of almost fine gradings 
as well as their Weyl groups together with the action on their 
universal groups.

The purpose of this paper is to initiate the classification of gradings 
up to isomorphism on the simple Lie algebras of the $E$ types by classifying, 
both up to isomorphism and up to equivalence, the special pure gradings 
on these algebras. 
A group grading on a semisimple Lie algebra $\cL$ over an 
algebraically closed field of characteristic zero is 
\emph{pure} if at least one of its components, other than the 
identity component, contains a Cartan 
subalgebra of $\cL$. This concept was introduced in 
\cite{Hesselink}, motivated by the so-called \emph{Jordan 
gradings} (see \cite{Alek,Jordangradings}). The particular 
symmetry of these  gradings makes them especially useful for 
constructing manageable models of exceptional Lie algebras (for 
instance, the one in \cite{CristinaG2}).
A grading on $\cL$ is \emph{special} if its identity component 
is zero; these are precisely the almost fine gradings on $\cL$ 
that have toral rank $0$. 

Partial information on almost fine gradings and their Weyl 
groups for simple Lie algebras of the $E$ types appears in 
\cite{Yu,YuEs}, where the objectives and methods are quite 
different from ours: the author classifies certain closed 
abelian subgroups of exceptional compact real simple Lie groups 
of adjoint type. The diagonalization of the action of these 
subgroups on the complexified tangent Lie algebra gives the 
almost fine gradings on it. We do not rely on these works, 
but we will point out some connections with our approach.

The structure of the paper is the following. Section~\ref{se_preliminares} contains preliminaries on 
quadratic forms in characteristic $2$, and how some of the orthogonal groups over the field of two 
elements, $\FF_2$, appear in relation with the Weyl groups of the simple Lie algebras of the $E$ types. 
These quadratic forms (as well as those induced by them on certain subquotients) will appear later in 
the explicit computations of the Weyl groups of the special pure gradings on these algebras. 
We also review the main concepts related to gradings (universal groups, Weyl groups, almost 
fine gradings) that are essential for achieving a classification up to isomorphism. 
Section~\ref{se:SpecialPure} reviews the work \cite{Hesselink}, aiming to adapt its results 
to our notation and goals. It also addresses general properties of special pure gradings 
and their Weyl groups---specifically, the subgroup that preserves a homogeneous component 
which is a Cartan subalgebra.
Section~\ref{se:SpecialpureE} is devoted to the classification, up to equivalence, of special pure 
gradings on the simple Lie algebras of the $E$ types, based on the quadratic forms mentioned above.
The main results here are Theorems~\ref{th:E8}, \ref{th:E7}, and \ref{th:E6}. Further information 
about the Weyl groups of the special pure gradings on these algebras is presented in
 Section~\ref{se:WeylSpecialPureE}. 
The goal is to determine what is preserved by these groups, so as to define a set of invariants 
for the gradings in Definition~\ref{df:inv}.
This leads to a classification up to isomorphism, presented in Theorem~\ref{th:iso}.

%%%%%%%%%%%%%%%%%%%%%%%%%%%%%%%%%%%%%%%%%%%%%%%

\bigskip

\section{Preliminaries}\label{se_preliminares}

\subsection{Quadratic forms in characteristic 2 }\label{subse_quadratic}

In this work, we will need to understand the Weyl groups of 
certain gradings on the simple Lie algebras of the $E$-types, 
including their usual Weyl groups, which turn out to be closely 
related to certain orthogonal groups over the field of two 
elements, $\FF_2$. Since quadratic forms in characteristic $2$ 
are perhaps not so well-known, we start with reviewing them 
briefly. The material in this section is standard and has been 
extracted from the textbook \cite{Grove}. 

If $V$ is a (finite-dimensional) vector space over a field 
$\FF$, recall that $Q\colon V\to\FF$ is a \emph{quadratic form} if 
$Q(cx)=c^2Q(x)$ for any $c\in\FF$ and 
$B_Q(x,y):=Q(x+y)-Q(x)-Q(y)$ is a bilinear form, called the 
\emph{polarization} of $Q$. Observe that $B_Q$ is necessarily 
symmetric (i.e., $B_Q(x,y)=B_Q(y,x)$) and, moreover, 
alternating in characteristic $2$ (i.e., $B_Q(x,x)=0$). 
In characteristic $2$, $Q$ cannot be recovered from $B_Q$. 

If $U$ and $V$ are two vector spaces endowed with
quadratic forms $Q_U$ and $Q_V$, their orthogonal sum, denoted by
$U\obot V$, is the direct sum $U\oplus V$, endowed with
the quadratic form $Q(u+v)\bydef Q_U(u)+Q_V(v)$ for all 
$u\in U$ and $v\in V$. Then $U$ and $V$ are orthogonal to each
other with respect to the polarization $B_Q$. We will also  write
$Q=Q_U\obot Q_V$.

A symmetric bilinear form $B$ is said to be 
\emph{nondegenerate} if its radical $\rad B=\{x\in V\mid 
B(x,V)=0\}$ vanishes. A quadratic form $Q$ is nondegenerate if $B_Q$ is nondegenerate.
In characteristic $2$, this concept is extended, and we will use 
the terms \emph{regular} or \emph{nonsingular} quadratic form to avoid 
possible confusion. 
Note that the restriction $Q\vert_{\rad B_Q}$ is semilinear 
(i.e., $Q(x+cy)=Q(x)+c^2Q(y)$ for $x,y\in \rad B_Q$ and 
$c\in\FF$), and the quadratic form is said to be \emph{regular} 
if the radical of $Q$, defined as $\rad Q\bydef\ker(Q\vert_{\rad B_Q})$ is trivial. 
For perfect fields, the regularity forces that $\dim\rad B_Q$ 
is 0 or 1. Moreover, the possibilities for a regular $Q$ are 
not many.
According to \cite[Theorem 12.9]{Grove}, 
\begin{enumerate}
\item If $\dim V=2m+1$, there is a basis $\{v_i\}$ of $V$ such 
that $Q(\sum_ix_iv_i)=x_1x_2+x_3x_4+\dots+x_{2m-1}x_{2m}+x_{2m+1}^2$.

(In this case $ \rad B_Q=\FF v_{2m+1}$ and $Q(v_{2m+1})=1$.)
\item If $\dim V=2m$, there is a basis $\{v_i\}$ of $V$ such that either
\begin{itemize}
\item[\rm(a)] $Q(\sum_ix_iv_i)=\sum_{i=1}^mx_{2i-1}x_{2i}$, or
\item[\rm(b)] $Q(\sum_ix_iv_i)=\sum_{i=1}^mx_{2i-1}x_{2i}+x_{2m-1}^2+cx_{2m}^2$, 
for some $c\in\FF$ such that $x^2+x+c$ is 
irreducible in $\FF[x]$.

(In this case the polarization $B_Q$ is nondegenerate.)
\end{itemize}
\end{enumerate}

In characteristic 2,  a \emph{hyperbolic plane} $H$ is spanned 
by $u,v\in V$ with $B_Q(u,v)=1$ and $Q(u)=Q(v)=0$. 
The maximum $r$ such that $V=H_1\obot H_2\obot \dots \obot H_r\obot W$, 
with each $H_i$ a hyperbolic plane, is called the 
\emph{Witt index}, and coincides with the dimension of a 
maximal totally isotropic subspace of $V$ (in fact, any two of 
these are isometric, due to Witt's Extension Theorem). The Witt 
index equals $m$ in case (1), and $m$ and $m-1$, 
respectively, in cases (2a) and (2b).

The equivalence class of a regular quadratic form $Q$ on $V$ of 
even dimension is also determined by the so-called \emph{Arf 
invariant}, $\Arf(Q)$, which has an especially simple 
definition for the field $\FF_2$: it tells us whether there are
more elements $v\in V$ with $Q(v) = 0$ (isotropic) or 
$Q(v) = 1$ (nonisotropic).
In case (1) half of the elements are isotropic, so $\Arf(Q)$ is 
undefined, while in case (2a) there are $\binom{2^m+1}{2}$ 
isotropic elements and $\binom{2^m }{2}$ nonisotropic, so   
$\Arf(Q)=0$ (or trivial), and in case (2b) there are 
$\binom{2^m}{2}$ isotropic elements and $\binom{2^m+1 }{2}$ 
nonisotropic, so $\Arf(Q)=1$. 

We will later need the following property of the Arf 
invariant. If $Q_1$ and $Q_2$ are regular quadratic forms on spaces of even dimension, 
then   $\Arf(Q_1\obot Q_2)=\Arf(Q_1)+\Arf(Q_2)$.

Since in characteristic $2$ the polarization of a quadratic 
form is alternating, there is a close relationship between 
symplectic and orthogonal groups. For a regular quadratic form 
$Q$, we have $\Ort(V,Q)\le \SP(V,B_Q)$, where the latter 
denotes the stabilizer of $B_Q$ (which is nondegenerate in even 
dimension). %, although the groups do not coincide. 
Besides, over a perfect field, if $V$ has odd dimension and $W$ 
is a complementary subspace of the one-dimensional radical of 
$B_Q$, then $B_Q\vert_W$ is nondegenerate and, by 
\cite[Theorems~14.1 and 14.2]{Grove}, the map 
$\Ort(V,Q)\to \SP(W,B_Q\vert_W)$, 
$\sigma\to\proj_W\circ\sigma\vert_W$, is an isomorphism. 

\smallskip

%%%%%%%%%%%%%%%%%%%%%%%%%%%%%%%%%%%%%%%%%%%%%%%%%%%%%%%%%%%%%

\subsection{Weyl groups of Lie algebras of the 
\texorpdfstring{$E$}{E} types}

We will now recall why the Weyl groups of the simple Lie 
algebras of types $E_6$, $E_7$ and $E_8$ are closely related to 
orthogonal groups over the field $\FF_2$.
Let $\cL$ be one of these Lie algebras over an algebraically 
closed field $\FF$
of characteristic $0$, let $\cH$ be a Cartan subalgebra
of $\cL$, and $\Phi$ the root system associated to $\cH$. That 
is, we have the root space decomposition
\begin{equation}\label{eq:CartanGrading}
\cL=\cH\oplus\bigl(\bigoplus_{\alpha\in\Phi}\cL_\alpha\bigr),
\end{equation}
where $\cL_\alpha=\{x\in\cL\mid [h,x]=\alpha(h)x\;\forall h\in \cH\}$.
 Let $(.\,|\, .)$
be the bilinear form on $\cH^*$, dual to the bilinear form 
obtained from the restriction of the Killing form to $\cH$, 
normalized so that $(\alpha\,|\,\alpha)=2$ for any 
$\alpha\in\Phi$. This bilinear form
restricts to a $\ZZ$-bilinear form on the root lattice 
$R=\ZZ\Phi$. Consider the associated 
$\ZZ$-valued quadratic 
form $\frac{1}{2}(r\,|\, r)$, whose polarization is 
$(.\,|\, .)$. This quadratic form induces
a quadratic form on the vector space $\oR:=R/2R$ over the field 
of two elements $\FF_2$:
\begin{equation}\label{eq:q}
q\colon \oR\longrightarrow \FF_2,\, \bar{r}\mapsto\frac{1}{2}(r\,|\,r) 
\pmod 2,
\end{equation} 
with polarization $b_q$ induced by the bilinear form $(.\,|\, .)$. 
Note that $q(\bar\alpha)=1$ for any root $\alpha\in\Phi$. 
Also, in the cases at hand, if $\beta\ne\pm\alpha$ then $\bar\beta\neq\bar\alpha$.

By straightforward computations with the Cartan matrix, the 
quadratic form $q\colon\oR\rightarrow \FF_2$ is regular, and we 
get the following possibilities,   depending on the
type of the simple Lie algebra $\cL$:

\begin{description}
\item[$E_8$] $q$ is regular with trivial Arf invariant: 
$\Arf(q)=0$. Moreover, in this case the nonisotropic vectors 
are exactly the vectors $\bar\alpha$ for $\alpha\in \Phi$.

\item[$E_7$] $q$ is regular. As the dimension is odd in this 
case, the radical of the polarization has
dimension $1$: $\rad b_q=\FF_2 a$. In this case, the 
nonisotropic vectors are $a$ and the elements
$\bar\alpha$ for $\alpha\in \Phi$.

\item[$E_6$] $q$ is regular with $\Arf(q)=1$. As for $E_8$, the 
nonisotropic vectors are exactly the vectors $\bar\alpha$ for 
$\alpha\in \Phi$.
\end{description}

The following description of the automorphism groups of the 
root systems, $\Aut(\Phi)$, is known (see, e.g., 
\cite[p.~104]{Wilson}), but we will give a proof for completeness, 
and also because we are going to generalize this result in the next section.
Any automorphism of $\Phi$ preserves $2R$, hence there is an 
induced group homomorphism
\begin{equation}\label{eq:rhoAutPhiGLR}
\rho\colon \Aut(\Phi)\longrightarrow \GL(\oR).
\end{equation}

\begin{proposition}\label{pr:CartanDieudonne}  
Let $\cL$ be a simple Lie algebra of type $E_6$, $E_7$, or 
$E_8$ over an algebraically closed field $\FF$ of 
characteristic $0$.
Then the image of the homomorphism $\rho$ is the orthogonal 
group $\Ort(q):=\Ort(\bar R,q)$, and the kernel is $\{\pm I\}$. 
Thus, the Weyl group is isomorphic to
\begin{itemize}
    \item $\Ort(q)$ in type $E_6$; 
    \item $\Ort(q)\times\mathrm{C}_2$ in type $E_7$; 
    \item a central extension of $\Ort(q)$ by $\mathrm{C}_2$ in type $E_8$. 
\end{itemize}
\end{proposition}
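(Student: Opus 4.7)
The plan is to verify, in sequence: (a) $\im\rho\subseteq\Ort(q)$, (b) $\ker\rho=\{\pm I\}$, (c) surjectivity via the Cartan--Dieudonn\'e theorem, and (d) the structure of $W(\Phi)$ in each type.

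For (a), any $\phi\in\Aut(\Phi)$ permutes the (simply-laced) roots and hence preserves the bilinear form $(.\,|\,.)$ on the root lattice $R$ (for Weyl elements this is obvious; for the diagram automorphism in $E_6$ it follows from preservation of the Cartan matrix of the simple roots). So $\phi$ preserves $\tfrac12(r\,|\,r)$ and its reduction modulo $2R$ preserves $q$. For (b), the inclusion $\{\pm I\}\subseteq\ker\rho$ is immediate. Conversely, if $\phi\in\ker\rho$, then for each root $\alpha$ the root $\phi(\alpha)$ is congruent to $\alpha$ modulo $2R$, and by the observation preceding the proposition the only such roots are $\pm\alpha$; hence $\phi(\alpha)=\veps(\alpha)\alpha$ with $\veps(\alpha)\in\{\pm1\}$. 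If $\alpha,\beta$ are simple roots joined in the Dynkin diagram then $\alpha+\beta\in\Phi$, and linearity of $\phi$ forces $\veps(\alpha)=\veps(\beta)$; connectedness of the diagram then gives $\phi=\pm I$.

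For (c), I will use that the Weyl reflection $s_\alpha(r)=r-(r\,|\,\alpha)\alpha$ reduces modulo $2R$ to the orthogonal transvection
\[
t_{\bar\alpha}\colon\bar r\longmapsto\bar r+b_q(\bar r,\bar\alpha)\bar\alpha,
\]
which is well defined since $q(\bar\alpha)=1$ and $b_q(\bar r,\bar\alpha)\equiv(r\,|\,\alpha)\pmod 2$. By the Cartan--Dieudonn\'e theorem for regular quadratic forms over the perfect field $\FF_2$, $\Ort(q)$ is generated by such transvections $t_v$ attached to nonisotropic $v$. In types $E_6$ and $E_8$ every nonisotropic $v$ equals some $\bar\alpha$, so each generator of $\Ort(q)$ lifts to $s_\alpha\in W(\Phi)\subseteq\Aut(\Phi)$. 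In $E_7$ the only additional nonisotropic vector is $a\in\rad b_q$, but $b_q(\cdot,a)\equiv 0$ forces $t_a=\id$, so again every nontrivial generator lifts and $\im\rho=\Ort(q)$. I expect this step, in particular locating the right form of the Cartan--Dieudonn\'e theorem in characteristic $2$ and correctly handling the radical direction in $E_7$, to be the main technical point.

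For (d), I rely on the standard fact that $-I=w_0$ lies in $W(\Phi)$ in types $E_7$ and $E_8$, while in $E_6$ one has $-I=w_0\sigma$ with $\sigma$ the nontrivial diagram automorphism, so $-I\notin W(E_6)$. Thus in $E_6$, $\rho|_{W(\Phi)}$ is injective and, by an order count, an isomorphism onto $\Ort(q)$. In $E_7$ and $E_8$, $\Aut(\Phi)=W(\Phi)$ and the short exact sequence restricts to a central extension
\[
1\longrightarrow\{\pm I\}\longrightarrow W(\Phi)\xrightarrow{\ \rho\ }\Ort(q)\longrightarrow 1.
\]
For $E_7$, the subgroup $W(E_7)^+=\ker\det$ has index $2$ and contains no element of $\ker\rho$, since $\det(-I)=(-1)^7=-1$; it therefore maps isomorphically onto $\Ort(q)$ and splits the sequence, giving $W(E_7)\cong\{\pm I\}\times\Ort(q)$. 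For $E_8$, $\det(-I)=1$, so this splitting trick fails, leaving the central extension of $\Ort(q)$ by $\mathrm{C}_2$ stated in the proposition.
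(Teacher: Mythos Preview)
Your proof is correct and follows essentially the same route as the paper's: both show $\im\rho\subseteq\Ort(q)$ via preservation of the integral quadratic form, compute $\ker\rho=\{\pm I\}$ from the fact that $\bar\alpha=\bar\beta$ forces $\beta=\pm\alpha$ together with connectedness of the Dynkin diagram, obtain surjectivity by lifting the orthogonal transvections that generate $\Ort(q)$ (Cartan--Dieudonn\'e) to Weyl reflections, and then read off the structure of $W(\Phi)$ from where $-I$ sits. The only noteworthy difference is in the odd-dimensional case $E_7$: the paper invokes the isomorphism $\Ort(q)\cong\SP(\oW,b_q|_{\oW})$ and the fact that symplectic groups are generated by transvections, whereas you observe directly that the unique ``extra'' nonisotropic vector $a\in\rad b_q$ gives $t_a=\id$, so only transvections of the form $t_{\bar\alpha}$ contribute. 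Both arguments are valid; yours is slightly more direct here, while the paper's makes the general structure in odd dimension explicit. One small point: the paper also notes the known exception to Cartan--Dieudonn\'e in characteristic $2$ (dimension $4$, $\Arf=0$, $\FF_2$) and checks it does not occur; you should do the same when you write this up formally.
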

\begin{proof}
Any automorphism of the root system $\Phi$ preserves the 
quadratic form $\frac{1}{2}(.\,|\, .)$ on the
root lattice $R=\ZZ\Phi$, and hence the image of $\rho$ is 
contained in $\Ort(q)$. Now, the Weyl group is generated by the 
reflections relative to roots, and $\rho$ maps them to the 
reflections (sometimes called orthogonal transvections in
characteristic $2$) relative to the nonisotropic vectors of 
$\oR$.
Recall that Cartan-Dieudonn\'e Theorem \cite{Dieudonne} says 
that, with one exception, reflections generate the orthogonal 
group of a regular quadratic form.
To be precise, for even-dimensional regular quadratic forms 
over a field of characteristic $2$, the orthogonal group is 
generated by reflections unless the dimension is $4$, the Arf 
invariant is $0$ and the ground field is $\FF_2$ (see, e.g., 
\cite[Theorem 14.16]{Grove}), and we have dimensions $6$ and 
$8$ in our cases. In odd dimension, the orthogonal group is 
isomorphic to a symplectic group, so the orthogonal group can 
be shown to be generated by reflections using the fact that the 
symplectic group is generated by transvections 
(\cite[Theorem 3.4]{Grove}). This completes the proof of the 
surjectivity of $\rho$.

If $\mu\in\Aut(\Phi)$ induces the identity map on $\oR$, then 
it must map each root $\alpha$ to $\pm\alpha$, and it is clear 
that the sign must be the same for all simple roots, because 
the Dynkin diagram is connected. Hence $\ker\rho=\{\pm I\}$. 

Finally, the Weyl group is $\Aut(\Phi)$ for types $E_7$ and 
$E_8$, whereas for type $E_6$ the Weyl group is a complement of 
$\{\pm I\}$ in $\Aut(\Phi)$. Also, for type $E_7$, $\Aut(\Phi)$ 
is the direct product of $\{\pm I\}$ and the subgroup of 
elements
of determinant $1$, as transformations of the euclidean vector space spanned by the roots. 
\end{proof}

%%%%%%%%%%%%%%%%%%%%%%%%%%%%%%%%%%%%%%%%%%%%%%%%%%%%%%%%%%%%%
\smallskip

\subsection{Gradings on Lie algebras}\label{subsec_gradings}

Though the concepts in this section can be considered in a more 
general setting, we will focus on group gradings on Lie 
algebras, in which case it is natural to consider only gradings by
abelian groups. Actually, for simple Lie algebras there is no 
loss of generality in assuming the grading groups are abelian.
All the  algebras will be considered finite-dimensional over an 
algebraically closed field $\FF$ of 
characteristic zero. Most of the material here is standard, and 
can be consulted, for instance, in \cite[Chapter~1]{EKmon}. The notion and 
results on almost fine gradings are extracted from \cite{EK_almostfine}.

\begin{definition}
    If $G$ is an abelian group, and $\cL$ is a Lie algebra, a 
    $G$-\emph{grading} $\Gamma$ on $\cL$ is a decomposition of 
    $\cL$ into a direct sum of subspaces indexed by $G$,
    \begin{equation}\label{eq_gradu}
    \Gamma:\cL=\bigoplus_{g\in G}\cL_g,
    \end{equation}
    such that $\cL_{g}\cL_{h}\subset\cL_{g+h}$ for all 
    $g,h\in G$. The subspace $\cL_g$ is the \emph{homogeneous 
    component} of degree $g$, and its elements are the 
    \emph{homogeneous elements} of degree $g$.
    %It is allowed that $\cL_{g}=0$.
   \end{definition}

 The \emph{support} of $\Gamma$ is the set 
 $\supp \Gamma:=\{g\in G\mid \cL_{g}\ne0\}$. For instance, for any 
 $G$ and any $\cL$, we can consider the \emph{trivial} $G$-grading on $\cL$, 
 given by $\cL_e=\cL$ and $\cL_g=0$ for all $g\ne e$, with support $\{e\}$.
 The \emph{type} of $\Gamma$ is the sequence $(n_1,\ldots,n_m)$, where $n_i$ 
 is the number of homogeneous components of
 $\Gamma$ of dimension $i$, for $i=1,\ldots,m$, with $m$ the maximum
 of these dimensions. Note that $\sum_{1\leq i\leq m}n_ii=\dim\cL$.

Two operations on gradings on $\cL$ are quite natural: 
coarsening and refinement. In general, $\Gamma'$ is said to be a 
\emph{coarsening} of $\Gamma$ if any homogeneous component of 
$\Gamma$ is contained in some homogeneous component of 
$\Gamma'$. The coarsening  is said to be  proper   if at least one of such 
inclusions is proper. 
For each group homomorphism $\alpha\colon G\to H$, we obtain 
 a coarsening ${}^\alpha\Gamma$ of the grading $\Gamma$ in 
\eqref{eq_gradu} as follows: 
\[
{}^\alpha\Gamma:\cL=\bigoplus_{h\in H}{}^\alpha\cL_h \quad 
\textrm{  for  } 
\quad{}^\alpha\cL_h\bydef\bigoplus_{g\in\alpha^{-1}(h)}\cL_g.
\]
Note that ${}^\alpha\Gamma$ is a proper coarsening of $\Gamma$ if 
and only if the restriction of the map $\alpha$ to 
$\supp \Gamma$ is not injective.

If $\Gamma'$ is a coarsening (respectively, a proper 
coarsening) of $\Gamma$, it is also said that $\Gamma$ is a 
\emph{refinement} (respectively, a  proper refinement) of 
$\Gamma'$.    A grading is said to be \emph{fine} if it admits 
no proper refinement.

\begin{example}\label{ex_sl1}
The simple Lie algebra $\cL=\mathfrak{sl}(2,\FF)$ has the following grading by
$G=\ZZ_2^2$:
\[
\Gamma_1: \quad \cL_{(\bar0,\bar1)}=
\FF\left(\begin{smallmatrix}0&1\\1&0\end{smallmatrix}\right),\quad
\cL_{(\bar1,\bar1)}=\FF\left({\begin{smallmatrix}1&0\\0&-1\end{smallmatrix}}
\right),\quad
\cL_{(\bar1,\bar0)}
=\FF\left(\begin{smallmatrix}0&1\\-1&0\end{smallmatrix}\right).
\]
Since all the homogeneous components have dimension 1, $\Gamma_1$ is clearly fine.
\end{example}

\begin{definition}\label{def_equivalent}
   Two gradings on $\cL$,  $\Gamma:\cL=\bigoplus_{g\in G}\cL_g$ and 
   $\Gamma':\cL=\bigoplus_{h\in H}\cL'_h$, are said to be \emph{equivalent} 
   if there is an automorphism $\varphi\in\Aut(\cL)$  and a bijection 
   $\alpha_{\varphi}\colon \supp \Gamma\to \supp \Gamma'$   such 
   that $\varphi(\cL_g)=\cL'_{\alpha_{\varphi}(g)}$ for all 
   $g\in \supp \Gamma$.
   In case $G=H$, $\Gamma$ and $\Gamma'$ are said to be \emph{isomorphic} if 
   there exists $\varphi\in\Aut(\cL)$ with $\alpha_{\varphi}=\id$,
   i.e., $\varphi(\cL_g)=\cL'_g$ for all $g\in G$.  
   \end{definition}

\begin{example}\label{ex_sl23}
The following (fine) gradings on the Lie algebra 
$\cL=\mathfrak{sl}(2,\FF)$ are equivalent: the $\ZZ$-grading
\[
\Gamma_2: \quad
\cL_{-1}=\FF\left(\begin{smallmatrix}0&0\\1&0\end{smallmatrix}\right),\quad
\cL_{0}=\FF\left(\begin{smallmatrix}1&0\\0&-1\end{smallmatrix}\right),\quad
\cL_{1}=\FF\left(\begin{smallmatrix}0&1\\0&0\end{smallmatrix}\right),
\]
and the $\ZZ_3$-grading
\[
\Gamma_3: \quad \cL_{\bar 2}
=\FF\left(\begin{smallmatrix}0&0\\1&0\end{smallmatrix}\right),\quad
\cL_{\bar 0}
=\FF\left(\begin{smallmatrix}1&0\\0&-1\end{smallmatrix}\right),\quad
\cL_{\bar 1}
=\FF\left(\begin{smallmatrix}0&1\\0&0\end{smallmatrix}\right).
\]
\end{example}

As we see from this example, the same set of subspaces of $\cL$ can be regarded as a grading by 
different groups $G$, even if $G$ is generated by the support. 
It turns out that $\ZZ$ is the ``natural'' group for this grading in the sense that it is not only 
generated by the support,
but also the set of relations imposed on the generators is the minimum required to satisfy the definition of grading.
More precisely, consider the grading \eqref{eq_gradu} as a grading by the set $S\bydef\supp\Gamma$, 
$\cL=\bigoplus_{s\in S}\cL_s$, and let
\[
S_*\bydef\{(s_1,s_2)\in S\times S\mid [\cL_{s_1},\cL_{s_2}]\ne 0\}.
\]
For any $(s_1,s_2)\in S_*$, denote by $s_1*s_2$ the unique element of $S$ such that 
$[\cL_{s_1},\cL_{s_2}]\subset\cL_{s_1*s_2}$.

\begin{definition}[\cite{PaZa}]\label{df:univ_group_by_relations}
The abelian group $U=U(\Gamma)$ generated by the set $S$ subject only to the relations 
$s_1+s_2=s_1*s_2$ for $(s_1,s_2)\in S_*$, 
is called the \emph{universal group} of the grading $\cL=\bigoplus_{s\in S}\cL_s$.
\end{definition}

By construction, whenever $S$ is embedded in a group $G$ to make $\cL=\bigoplus_{s\in S}\cL_s$ a $G$-grading, 
there is a unique group homomorphism $U\to G$ that extends the embedding $S\to G$.
This universal property determines $U$ up to isomorphism.

In Example~\ref{ex_sl23}, the universal group is $U(\Gamma_2)=U(\Gamma_3)=\ZZ$ 
(equivalent gradings have isomorphic universal groups). In Example~\ref{ex_sl1}, $U(\Gamma_1)=\ZZ_2^2$.

The universal group also has the following key property. For 
any coarsening $\Gamma':\cL=\bigoplus_{h\in H}\cL'_h$ of $\Gamma$, there exists a unique 
group homomomorphism $\alpha\colon U\to H $ such that 
$\Gamma'={}^\alpha\Gamma$. It is clear that every grading is a 
coarsening of at least one fine grading. As mentioned in the Introduction, this can be used 
to obtain all possible group gradings on a fixed Lie algebra starting from 
the classification of fine gradings up to equivalence, although there is a problem due to the 
lack of uniqueness.

\begin{example}\label{ex_sl4}
Consider again $\cL=\mathfrak{sl}(2,\FF)$, endowed with the $\ZZ_2$-grading
\[
\Gamma_4:  \quad
\cL_{\bar0}
=\FF\left(\begin{smallmatrix}1&0\\0&-1\end{smallmatrix}\right),\quad
\cL_{\bar 1}
=\FF\left(\begin{smallmatrix}0&1\\0&0\end{smallmatrix}\right)\oplus 
\FF\left(\begin{smallmatrix}0&0\\1&0\end{smallmatrix}\right).
\]
Thus $\Gamma_4={}^\alpha\Gamma_1$ for $\alpha\colon \ZZ_2^2\to \ZZ_2$, 
$(\bar a,\bar b)\mapsto \bar a+\bar b$, 
and also $\Gamma_4={}^\beta\Gamma_2$ for 
$\beta\colon \ZZ\to \ZZ_2$, $a\mapsto \bar a$.
\end{example}

In order to assign to each grading only one grading that is ``fine'' in some sense 
(see Definition~\ref{def_almostfine}) we need first to review some groups attached to a grading. 

\begin{definition}[\cite{PaZa}]
    Given a grading $\Gamma$ as in \eqref{eq_gradu}, consider the groups
    \begin{itemize}
        \item $\Diag(\Gamma)\bydef\{\varphi\in\Aut(\cL)\mid \forall g\in G\;\varphi\vert_{\cL_g}\in\FF^\times\id\}$;
        \item $\Stab(\Gamma)\bydef\{\varphi\in\Aut(\cL)\mid \forall g\in G\;\varphi(\cL_g)\subseteq \cL_g\}$;
        \item $\Aut(\Gamma)\bydef\{\varphi\in\Aut(\cL)\mid
                         \forall g\in G\;\exists h\in G\;\varphi( \cL_g)\subseteq \cL_h\}$.
    \end{itemize}
   Note that $\Diag(\Gamma)\trianglelefteq\Stab(\Gamma)\trianglelefteq\Aut(\Gamma)$. 
   The   \emph{Weyl group} of the grading $\Gamma$ is defined by 
$W(\Gamma):=\Aut(\Gamma)/\Stab(\Gamma)$. 
\end{definition}

The Weyl group $W(\Gamma)$ provides a measure of the symmetry of the grading $\Gamma$. 
Note that, for the root space decomposition of a semisimple Lie algebra regarded as a grading 
by the root lattice, this definition gives $W(\Gamma)\cong\Aut(\Phi)$, which may be larger 
than the classical Weyl group.

The grading $\Gamma$ (more precisely, the set of its homogeneous components) 
can be recovered as the simultaneous eigenspace decomposition with respect 
to  $\Diag(\Gamma)$. This  is a diagonalizable algebraic group, hence 
isomorphic to the direct product of a torus and a finite abelian group, 
usually called a \emph{quasitorus}. Recall that the rank of an algebraic 
group is the dimension of any of its maximal tori (which are known to be 
conjugate over an algebraically closed field). Remarkably, this diagonal 
group is isomorphic to the group of characters 
$\widehat{U}=\Hom(U,\FF^\times)$ 
of the universal group $U=U(\Gamma)$. In fact, the map which sends any $\chi\in\widehat{U}$ 
to the automorphism $\varphi_\chi\in\Aut(\cL)$  defined by 
$\varphi_\chi\vert_{\cL_g}=\chi(g)\,\id$ for all $g\in U$, is an isomorphism.
If $U\cong\ZZ^l\times H$ for $H$ a finite abelian group, then 
$\Diag(\Gamma)\cong (\FF^\times)^l\times H$ has dimension $l$, equal to the rank of the universal group. 

Each $\varphi\in\Aut(\Gamma)$  produces a grading isomorphic to $\Gamma$, given by 
$\cL'_g\bydef\varphi(\cL_g)$, and the bijection $\alpha_\varphi$ in Definition~\ref{def_equivalent} 
is naturally extended to a group automorphism of the universal group $U=U(\Gamma)$. 
This gives a group homomorphism $\Aut(\Gamma)\to \Aut(U)$,
whose kernel is precisely $\Stab(\Gamma)$, so the Weyl group of the grading can be seen as a subgroup of 
$\Aut(U)$. 

\begin{example}\label{ex_grupos}
    Consider, for $\cL=\mathfrak{sl}(2,\FF)$ and each $c\in\FF^\times$, the automorphism 
    $\varphi_c\colon\cL\to\cL$ defined by 
    $\varphi_c\left(\begin{smallmatrix}x&y\\z&-x\end{smallmatrix}\right)
     =\left(\begin{smallmatrix}x&cy\\c^{-1}z&-x\end{smallmatrix}\right)$ and the one-dimensional torus 
     $\cT=\{ \varphi_{c}\mid c\in\FF^\times\}$. Take also the order two automorphism $\sigma$ of $\cL$ given by
    $\sigma\left(\begin{smallmatrix}x&y\\z&-x\end{smallmatrix}\right)
      =\left(\begin{smallmatrix}-x&-z\\-y&x\end{smallmatrix}\right)$. 
    
    For $\Gamma_4$ as in Example~\ref{ex_sl4}, it is easily computed that 
    $\Diag(\Gamma_4)=\{\id,\varphi_{-1}\}$ while 
    $\Stab(\Gamma_4)=\Aut(\Gamma_4)=\cT\cup\cT\sigma\cong\cT\rtimes\mathrm{C}_2$, since 
    $\sigma\varphi_c=\varphi_{c^{-1}}\sigma$.
     In particular, the Weyl group $W(\Gamma_4)$ is trivial.  
    
    For $\Gamma_2$ as in Example~\ref{ex_sl23}, the automorphism $\sigma$
    above permutes the homogeneous components. Note that $\Diag(\Gamma_2)=\Stab(\Gamma_2)=\cT$, 
    while $\Aut(\Gamma_2)\cong\cT\rtimes\mathrm{C}_2$. Here $W(\Gamma_2)\cong\mathrm{C}_2$.

    For $\Gamma_1$ as in Example~\ref{ex_sl1}, 
    $\Diag(\Gamma_1)=\Stab(\Gamma_1)=\langle \varphi_{-1},\sigma\rangle$ is isomorphic to the 
    universal group $\ZZ_2^2$, while the Weyl group 
    $W(\Gamma_1)=\Aut(\ZZ_2^2)\cong \mathrm{S}_3$, since we can easily find 
    automorphisms interchanging any pair of homogeneous components.
\end{example}

The \emph{toral rank} of $\Gamma$ is defined as the rank of the algebraic group $\Stab(\Gamma)$, 
which is denoted by $\mathrm{tor.rank}(\Gamma)$. Thus the toral rank is at least the rank of the 
finitely generated abelian group $U(\Gamma)$. For instance, the toral rank of $\Gamma_4$ is 1, 
although the rank of the universal group is 0.

\begin{definition}\label{def_almostfine}
    A grading $\Gamma$ is \emph{almost fine} if $\rank(U(\Gamma))=\mathrm{tor.rank}(\Gamma)$ 
    or, in other words, if the maximal torus contained in $\Diag(\Gamma)$ is also maximal in $\Stab(\Gamma)$.
\end{definition}

If $\Gamma$ is a fine grading, then $\Diag(\Gamma)$ is a maximal quasitorus of $\Aut(\cL)$ and 
hence $\Gamma$  is almost fine, but the converse is not true. There are no examples of this fact 
in the case of $\cL=\mathfrak{sl}(2,\FF)$, where $\Gamma_1$ and $\Gamma_2$ are the only 
almost fine gradings up to equivalence, 
but we will later see for $\cL$ of the $E$ types several examples of almost fine gradings that are not fine.

If $\Gamma'$ is a coarsening of $\Gamma$, then the automorphisms of $\cL$ preserving the 
homogeneous components of $\Gamma$ will also preserve those of $\Gamma'$, so    
$\Stab(\Gamma)\subseteq \Stab(\Gamma')$. 
This implies that refinements do not increase the toral rank. 
For almost fine gradings, any refinement preserves the toral rank. 

For semisimple Lie algebras, an almost fine grading admits  quite a simple characterization: 
$\rank(U(\Gamma))=\dim\cL_e$.  We always
have $\rank(U(\Gamma))\leq \dim\cL_e$ so, in particular, $\cL_e=0$ if and
only if $\Gamma$ is almost fine and has finite universal group.
 The gradings satisfying $ \cL_e=0$ are called \emph{special} gradings. 
 One example that appeared earlier is $\Gamma_1$.
In a special grading, every homogeneous element is semisimple
\cite[3.6 Proposition]{Hesselink}, which gives another reason to study 
them: they provide 
nice bases of semisimple Lie algebras.

For our purposes, the reason to consider almost fine gradings in detail is that every grading admits 
a canonical almost fine refinement (thus avoiding duplications in the classification process). 
More precisely, if $\Gamma$ is the $G$-grading in \eqref{eq_gradu}, and we pick a maximal 
torus $\cT$ in $\Stab(\Gamma)$, consider the refinement $\Gamma^*$ obtained by 
decomposing each homogeneous component $\cL_g$ with respect to the action of $\cT$:
\begin{equation}\label{eq_gradu*}
    \Gamma^*:\cL
    =\hspace*{-12pt}\bigoplus_{(g,\lambda)\in G\times\widehat{\cT}}\hspace*{-12pt}\cL_{(g,\lambda)},
    \textrm{ for }\cL_{(g,\lambda)}:=\{x\in\cL_g\mid \tau(x)=\lambda(\tau)x\ \forall \tau\in\cT\}.
    \end{equation} 
    The grading $\Gamma^*$ is almost fine, with the same toral rank as $\Gamma$, 
    and any other grading satisfying these properties is equivalent to $\Gamma^*$. %(CREO).
    In the above examples, $\Gamma_4$ is not almost fine and $\Gamma_4^*=\Gamma_2$. 
    After this discussion, it becomes clear that the idea for achieving a classification consists 
    in studying the coarsenings of almost fine gradings that preserve their toral rank. The suitable 
    coarsenings are given by \emph{admissible} homomorphisms 
    $\alpha\colon U\to H$ as in \cite[\S4]{EK_almostfine}.

A grading is \emph{pure} if there is a Cartan subalgebra contained in some homogeneous 
component distinct from the neutral homogeneous component.
For instance, $\Gamma_1$ and $\Gamma_4$ are pure gradings, but only $\Gamma_1$ is special. 
In this paper, we will focus on special pure gradings, since 
they are almost fine and serve as the first step towards a classification of gradings on 
the simple Lie algebras of the $E$ types.

There is a nice  special pure  grading on any semisimple   $\cL$, given by the following example.

\begin{example}\label{ex_finas}
  Let $\cL$ be a semisimple Lie algebra of rank $l$. Consider  
  a Cartan subalgebra $\cH$ of $\cL$, the root system $\Phi$ associated to $\cH$, 
  and the root space decomposition as in \eqref{eq:CartanGrading}. As in 
  Example~\ref{ex_grupos}, there exists an order
$2$ automorphism $\sigma$ of $\cL$ which is $-\id$ on $\cH$, and hence
takes any root space $\cL_\alpha$ to $\cL_{-\alpha}$. Then $\sigma$
commutes with the elements of order $2$ in the torus $\cT$ associated 
to $\cH$. The eigenspaces of the quasitorus $\cT_2\times\langle\sigma\rangle$, where 
$\cT_2\bydef\{\tau\in\cT\mid \tau^2=\id\}$, are the homogeneous components of a
 special pure grading on $\cL$, whose universal group is isomorphic to $\ZZ_2^{l+1}$. 
\end{example}

We are going to generalize this example in the next section, where it will correspond to the case 
$E=2R$. The homogeneous components are given by \eqref{eq:homogeneousGamma}.
For a simple Lie algebra $\cL$ with a simply laced root system, the nonzero 
homogeneous components of this grading have dimension $1$, with the sole exception of the
Cartan subalgebra $\cH$, which implies that the grading is fine. For $\frsl(2,\FF)$, it 
is the grading in Example~\ref{ex_sl1}.

It turns out that all the special pure gradings are coarsenings of the
grading in this example, as we will see in Section~\ref{se:SpecialPure}.

\bigskip

%%%%%%%%%%%%%%%%%%%%%%%%%%%%%%%%%%%%%%%%%%%%%%%%%%%%%%%

\section{Weyl groups of special pure gradings}\label{se:SpecialPure}

This section will be devoted to describing the special pure gradings of semisimple Lie algebras and to 
begin our study of the Weyl groups, following
the work of Hesselink \cite{Hesselink}.

\smallskip

Let $\cL$ be a finite-dimensional semisimple Lie algebra over an algebraically closed ground field $\FF$ of
characteristic $0$,
 and let 
\[
 \Gamma: \cL=\bigoplus_{g\in G}\cL_g
\] 
 be a pure grading, i.e., there is an element 
$s\in G\setminus\{e\}$ such that $\cL_s$ contains a Cartan subalgebra $\cH$ of $\cL$. Fix the 
element $s$ in what follows.

Assume, from now on, that $G$ is the universal group of $\Gamma$. 
Recall that its 
group of (multiplicative) characters 
$\widehat G$ is
 isomorphic to the diagonal group $\Diag(\Gamma)$, where $\chi\in\Hom(G,\FF^\times)$ corresponds to
the diagonal automorphism $\tau_\chi\colon x\in\cL_g\mapsto \chi(g)x$ for any $g\in G$.

Consider the torus associated to the Cartan subalgebra $\cH$:
\[
\cT=\{\tau\in\Aut(\cL)\mid \tau\vert_\cH=\id\}.
\]
Then (see, e.g., \cite[Proposition 2.2]{EldNonGroup}), 
the diagonal group of $\Gamma$ is
\begin{equation}\label{eq:DiagGamma}
\Diag(\Gamma)=\bigl(\Diag(\Gamma)\cap \cT_2\bigr)\times\langle \sigma\rangle
\end{equation}
for some element $\sigma\in\Aut(\cL)$ such that $\sigma^2=\id$ and $\sigma\vert_\cH=-\id$, and 
where $\cT_2$ is 
the $2$-periodic part of $\cT$: $\cT_2=\{\tau\in\cT\mid \tau^2=\id\}$. In particular, 
$\Diag(\Gamma)$ is an elementary abelian $2$-group, and so is $G$.

For the sake of completeness, we include a sketch of the proof of \eqref{eq:DiagGamma}. 
If $\chi$ is a character of $G$ with 
$\chi(s)\ne 1$, the automorphism $\sigma=\tau_\chi$ restricts
to $\chi(s)\id$
on $\cL_s$ and hence on $\cH$. This forces $\chi(s)^{-1}\alpha$ to be a root for any root $\alpha$
relative to $\cH$. It follows that $\chi(s)=-1$, which forces $\sigma^2=\id$ 
(\cite[Lemma 2.1]{EldNonGroup}). This same argument shows 
that $\chi(s)=1$ or $-1$ for 
any $\chi\in\widehat G$, and hence $\Diag(\Gamma)$ is contained in the centralizer
of $\sigma$ in $\{\varphi\in\Aut(\cL)\mid \varphi\vert_\cH=\pm \id\}=\cT\rtimes\langle\sigma\rangle$,
and this centralizer is $\cT_2\times\langle\sigma\rangle$ (\cite[Lemma 2.1]{EldNonGroup}). It follows
that $\Diag(\Gamma)=\bigl(\Diag(\Gamma)\cap \cT_2\bigr)\times\langle \sigma\rangle$.

Let $\Phi$ be the root system associated to $\cH$: 
\begin{equation*}%\label{eq:CartanGrading}
\cL=\cH\oplus\bigl(\bigoplus_{\alpha\in\Phi}\cL_\alpha\bigr),
\end{equation*}
and let $R=\ZZ \Phi$ be the root lattice. Write, as before, 
$\overline{R}\bydef  R/2R$. Then, identifying
any character of $\overline{R}$ with a character of $R$ trivial on $2R$, we have 
\[
\cT_2=\{\tau_\chi\mid \chi\in \Hom(\oR,\{\pm 1\})\},
\] 
where 
$\tau_\chi\colon x\in\cL_\alpha\mapsto \chi(\overline{\alpha})x$ for any $\alpha\in\Phi\cup\{0\}$.

\begin{remark}\label{re:involution_types}
Recall that, for $\cL$ of types $E_6$, $E_7$, $E_8$, we have defined in \eqref{eq:q} a 
regular quadratic form $q$ on $\oR$, regarded as a vector space over $\FF_2$.
Since $\cT_2$ is identified with the character group of $\oR$, which can be considered as 
the dual space of $\oR$, we can transport the quadratic form $q$ in the cases $E_6$ and 
$E_8$ along the isomorphism $\oR\to\cT_2$ defined by the nondegenerate bilinear form $b_q$. 
The resulting quadratic form $\hat{q}$ on $\cT_2$ has the following meaning. For $\cL$ of type $E_8$, 
there are two conjugacy classes of automorphisms of order $2$, which are distinguished by the 
subalgebra of fixed points, and $\hat{q}$ takes value $0$ on the nontrivial elements $\tau\in\cT_2$ 
for which $\cL^\tau$ is of type $D_8$ and value $1$ on those for which $\cL^\tau$ is of type $E_7+A_1$. 
Moreover, the elements of $\cT_{2}\times\langle\sigma\rangle$ that are not in $\cT_2$ give type $D_8$. 
For $\cL$ of type $E_6$, there are two conjugacy classes of inner automorphisms of order $2$, and 
$\hat{q}$ takes value $0$ when $\cL^\tau$ is of type $D_5+Z$ (where $Z$ stands for the 
$1$-dimensional center) and value $1$ when $\cL^\tau$ is of type $A_5+A_1$. 
Moreover, the elements of $\cT_{2}\times\langle\sigma\rangle$ that are not in $\cT_2$ are 
outer automorphisms (with $\cL^\tau$ of type $C_4$).
\end{remark}

Now consider the quotient of $G$ by the subgroup generated by the fixed element $s$: 
$\oG=G/\langle s\rangle$, and the associated coarsening:
\[
\oGamma:\cL=\bigoplus_{\bar g\in\oG}\cL_{\bar g},
\]
where, for any $g\in G$, $\bar g$ denotes its coset $g\langle s\rangle$, and 
$\cL_{\bar g}=\cL_g\oplus\cL_{gs}$. As the Cartan subalgebra $\cH$ is contained in the neutral
homogeneous component $\cL_{\bar s}=\cL_{\bar e}$, the grading $\oGamma$ is a coarsening of the
Cartan grading \eqref{eq:CartanGrading} and this gives a surjective group homomorphism
\begin{equation}\label{eq:pi}
\pi\colon R\longrightarrow \oG
\end{equation}
such that for any root $\alpha\in\Phi$, $\pi(\alpha)=\bar g$ if $\cL_\alpha\subseteq \cL_{\bar g}$.

Let $E\bydef \ker\pi$ be the kernel of $\pi$, which is called the \emph{complementary lattice} in 
\cite[1.4]{Hesselink}. 
As $G$ is $2$-elementary, $2R$ is contained in $E$. Also, $\pi$ induces an isomorphism
\begin{equation}\label{eq:piE}
\pi_E\colon R/E\longrightarrow \oG.
\end{equation}

Besides, $G$ being the universal group of $\Gamma$, and $\oGamma$ being a proper coarsening of 
$\Gamma$, there is a unique surjective map of $G$ onto the universal group of $\oGamma$, with 
$s$ in its kernel, and hence, $\oG$ is the universal group of $\oGamma$. We therefore have:
\begin{equation}\label{eq:Ecirc}
E= 2R +\ZZ(\Phi^+\cap E)+\ZZ\{\alpha-\beta\mid \alpha,\beta\in\Phi^+\ 
\text{and}\ \alpha-\beta\in E\},
\end{equation}
or, in the notation of 
\cite[(2.4)]{EldNonGroup}, $E=E^\circ$. This is because the subgroup on
the right hand side of \eqref{eq:Ecirc} is contained in $E$, and the quotient
of $R$ by this subgroup gives the same coarsening $\overline\Gamma$, up to equivalence. (For
details, see \cite[\S2]{EldNonGroup}.)

Moreover, the diagonal group of $\oGamma$ is
\[
\Diag(\oGamma)=\Diag(\Gamma)\cap \cT_2=\{\tau_\chi\mid \chi\in\Hom(R/E,\{\pm 1\})\}.
\]

A word of caution is needed here. Hesselink does not consider universal groups, although the grading
groups appearing in \cite{Hesselink} are universal in many cases.

The automorphism $\sigma$ in \eqref{eq:DiagGamma} is not unique. If we fix one of them, and consider
the corresponding character $\chi\in \widehat G$, so that $\chi(g)=\pm 1$ according to 
$\sigma\vert_{\cL_g}=\pm\id$, we can identify our grading group $G$ with $R/E\times \FF_2$:
\begin{equation}\label{eq:GRE2}
\begin{split}G&\simeq R/E\times \FF_2\\
                 g&\leftrightarrow \bigl(\pi_E^{-1}(\bar g), i\bigr),
\end{split}
\end{equation}
where $i= 0$ if $\chi(g)=1$, and $i= 1$ if $\chi(g)=-1$. Under this identification, the element $s$ 
corresponds to $(0, 1)\in R/E\times\FF_2$
and the homogeneous components of $\Gamma$ are the following (\cite[(2.3)]{EldNonGroup}):
\begin{equation}\label{eq:homogeneousGamma}
\begin{split}
\cL_{(r+E, 0)}
 &=\bigoplus_{\alpha\in\Phi^+\cap(r+E)}\FF\left(x_\alpha+\sigma(x_\alpha)\right),\\
\cL_{(r+E, 1)}
  &=\begin{cases}
   \bigoplus_{\alpha\in\Phi^+\cap(r+E)}\FF\left(x_\alpha-\sigma(x_\alpha)\right)
       &\text{if $r+E\neq E$,}\\
  \cH\oplus\left(\bigoplus_{\alpha\in\Phi^+\cap(r+E)}
           \FF\left(x_\alpha-\sigma(x_\alpha)\right)\right)
           &\text{if $r+E=E$,}
        \end{cases}
\end{split}
\end{equation}
while the homogeneous components of $\oGamma$ are the following:
\begin{equation}\label{eq:homogeneousGammabar}
\begin{split}
\overline\cL_{r+E}
 &=\bigoplus_{\alpha\in\Phi^+\cap(r+E)}\left(\cL_\alpha\oplus\cL_{-\alpha}\right),\qquad\text{if $r\not\in E$,}\\
\overline\cL_{E}&=
  \cH\oplus\left(\bigoplus_{\alpha\in\Phi^+\cap E}\left(\cL_\alpha\oplus\cL_{-\alpha}\right)\right).
\end{split}
\end{equation}
Here we have fixed a system of simple roots, which decomposes $\Phi$ into the set of positive roots 
$\Phi^+$ and negative roots $\Phi^-=-\Phi^+$, and have picked nonzero elements $x_\alpha\in\cL_\alpha$
for any $\alpha\in\Phi^+$.

Equation~\eqref{eq:homogeneousGamma} gives at once the following result.

\begin{proposition}\label{pr:Phi_cap_E}
Let $\Gamma$ be a pure  grading on a semisimple Lie algebra $\cL$ 
over an algebraically closed field of characteristic $0$, with 
universal group
$G$. Let $s\in G$ be an element such that $\cL_s$ contains a Cartan subalgebra $\cH$. Let $\Phi$ be the
root system relative to $\cH$, $R=\ZZ\Phi$ the root lattice, and  $E$ the complementary lattice of $\Gamma$. Then $\Gamma$ 
is special if and only if $\Phi\cap E=\emptyset$.
\end{proposition}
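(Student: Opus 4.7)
The plan is to read off the identity component $\cL_e$ directly from the explicit description in \eqref{eq:homogeneousGamma} and check when it vanishes. Under the identification \eqref{eq:GRE2}, the identity element $e\in G$ corresponds to $(0+E,0)\in R/E\times\FF_2$, which falls into the case $r+E=E$ with the second coordinate equal to $0$. Therefore
\[
\cL_e=\cL_{(E,0)}=\bigoplus_{\alpha\in\Phi^+\cap E}\FF\bigl(x_\alpha+\sigma(x_\alpha)\bigr),
\]
since the Cartan subalgebra $\cH$ sits in the $(E,1)$-component (which is $\cL_s$, matching the hypothesis that $s=(0,1)$ labels the Cartan).

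The next step is to note that each summand is one-dimensional. Indeed, $\sigma$ sends $\cL_\alpha$ to $\cL_{-\alpha}$, so $\sigma(x_\alpha)\in\cL_{-\alpha}$, and since $\cL_\alpha\cap\cL_{-\alpha}=0$ (distinct root spaces), the vector $x_\alpha+\sigma(x_\alpha)$ is nonzero whenever $x_\alpha\neq 0$. Consequently $\cL_e=0$ if and only if the index set $\Phi^+\cap E$ is empty.

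Finally, the condition $\Phi^+\cap E=\emptyset$ is equivalent to $\Phi\cap E=\emptyset$: since $E$ is a subgroup of $R$ and $\Phi=\Phi^+\cup(-\Phi^+)$, we have $\alpha\in E$ iff $-\alpha\in E$. Combining everything, $\Gamma$ is special (i.e., $\cL_e=0$) precisely when $\Phi\cap E=\emptyset$. There is no real obstacle here---the work was done in setting up \eqref{eq:homogeneousGamma}; the proposition is essentially a direct reading of that formula at the identity component.
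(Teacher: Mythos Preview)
Your proof is correct and follows exactly the approach the paper takes: the paper simply states that the result follows at once from \eqref{eq:homogeneousGamma}, and you have spelled out precisely how---identifying $e$ with $(E,0)$, reading off $\cL_e$ as the sum over $\Phi^+\cap E$, and observing that each summand is nonzero. Your additional remarks (that $x_\alpha+\sigma(x_\alpha)\neq 0$ and that $\Phi^+\cap E=\emptyset$ iff $\Phi\cap E=\emptyset$) are the natural details one would fill in, and they are correct.
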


\smallskip

Our aim in this section is, assuming that $\Gamma$ is also special, to compute the stabilizer 
$W_s(\Gamma)$ of $s$ in the Weyl group of $\Gamma$:
\[
W_s(\Gamma)=\{ w\in W(\Gamma)\mid w(s)=s\}.
\]
In many situations, there is a unique $s\in G$ with $L_s$ containing a Cartan subalgebra, so $W_s(\Gamma)$ is the
whole Weyl group $W(\Gamma)$.

Because of \eqref{eq:GRE2}, we may identify $W(\Gamma)$ with a subgroup of 
$\GL(R/E\times \FF_2)\simeq \GL_{l+1}(2)$, where $l$ is the dimension of $R/E$, which is a 
$2$-elementary group,
and hence a vector space over $\FF_2$. As $s$ corresponds to $(0,1)$, $W_s(\Gamma)$ is contained in the
following subgroup:
\begin{equation}\label{eq:GLRE_HomREF2}
\left(
\begin{array}{c|c}
\GL(R/E)&\begin{matrix} 0\\[-3pt] \vdots \\ 0\end{matrix}\\
\hline\\[-8pt]
\Hom(R/E,\FF_2) &1
\end{array}
\right),
\end{equation}
a semidirect product of $\Hom(R/E,\FF_2)$ and $\GL(R/E)$.

\begin{lemma}\label{le:WsGamma}
Let $\Gamma$ be a special pure grading on the semisimple Lie algebra $\cL$.
Under the identification above, the subgroup
\[
\left(
\begin{array}{c|c}
I_l&\begin{matrix} 0\\[-3pt] \vdots \\ 0\end{matrix}\\
\hline\\[-8pt]
\Hom(R/E,\FF_2) &1
\end{array}
\right),
\]
is contained in $W_s(\Gamma)$.
\end{lemma}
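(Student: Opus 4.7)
The plan is to produce, for each $\phi\in\Hom(R/E,\FF_2)$, an automorphism $\psi_\phi\in\Aut(\cL)$ whose class in the Weyl group realizes the required element of \eqref{eq:GLRE_HomREF2}: the one sending $(r+E,i)\mapsto(r+E,i+\phi(r+E))$. Such an element automatically fixes $s=(0,1)$, placing it in $W_s(\Gamma)$.

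My first thought would be to take the diagonal involution $\tau_{\chi_0}\in\cT_2$ associated to the character $\chi_0\in\Hom(\oR,\{\pm 1\})$ with $\chi_0(\bar\alpha)=(-1)^{\phi(\alpha+E)}$; but this automorphism lies in $\Diag(\Gamma)$ and so represents the identity in the Weyl group. The decisive step is to replace $\chi_0$ by a \emph{square root}: since $R$ is free abelian, I can pick $\wt\chi\in\Hom(R,\FF^\times)$ with $\wt\chi(r)^2=(-1)^{\phi(r+E)}$ for every $r\in R$, by assigning to each generator $\beta_i$ of $R$ a value in $\{\pm 1\}$ or $\{\pm i\}$ according to whether $\phi(\beta_i+E)$ is $0$ or $1$. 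Set $\psi_\phi\bydef\tau_{\wt\chi}\in\cT$; it fixes $\cH$ pointwise and acts on each root space as $\psi_\phi(x_\alpha)=\wt\chi(\alpha)\,x_\alpha$.

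To verify that $\psi_\phi$ implements the desired permutation of components, I compute, for $\alpha\in\Phi^+\cap(r+E)$ and using $\sigma(x_\alpha)\in\cL_{-\alpha}$,
\[
\psi_\phi\bigl(x_\alpha\pm\sigma(x_\alpha)\bigr)=\wt\chi(\alpha)\,x_\alpha\pm\wt\chi(\alpha)^{-1}\sigma(x_\alpha).
\]
If $\phi(r+E)=0$, then $\wt\chi(\alpha)^{-1}=\wt\chi(\alpha)$ and $\psi_\phi$ preserves both $\cL_{(r+E,0)}$ and $\cL_{(r+E,1)}$; if $\phi(r+E)=1$, then $\wt\chi(\alpha)^{-1}=-\wt\chi(\alpha)$ and $\psi_\phi$ interchanges them. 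Specialness yields $\Phi\cap E=\emptyset$ (Proposition~\ref{pr:Phi_cap_E}), so $\cL_{(E,0)}=0$ and only $\cL_{(E,1)}\supseteq\cH$ sits over the neutral coset; this component is fixed pointwise. Altogether $\psi_\phi\in\Aut(\Gamma)$ and its induced action on $G$ is the prescribed one.

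The only genuinely nonroutine step in this plan is the recognition that the naive diagonal involution $\tau_{\chi_0}$ is invisible in the Weyl group, and that passing to a square root valued in fourth roots of unity turns it into an honest permutation of components. Everything else reduces to bookkeeping of the scalars $\pm 1,\pm i$ and invoking the hypothesis that $\Gamma$ is special to discard the neutral-coset contribution.
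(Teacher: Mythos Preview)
Your proof is correct and follows essentially the same route as the paper's: both take a square root $\wt\chi\in\Hom(R,\FF^\times)$ of the $\{\pm1\}$-valued character determined by $\phi$, then check that the torus element $\tau_{\wt\chi}$ permutes the homogeneous components as required. Your version is slightly more explicit (you construct $\wt\chi$ on generators and explain why the naive involution fails), but the key mechanism is identical.
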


\begin{proof}
Let $\xi$ be an arbitrary element of $\Hom(R/E,\FF_2)$, and let 
$\tilde\xi\colon R\rightarrow \{\pm 1\}$ be the
character of $R$ given by $\tilde\xi(q)=(-1)^{\xi(r+E)}$. As $\FF$ is algebraically closed, let 
$\chi\in\Hom(R,\FF^\times)$ be a character with $\chi^2=\tilde\xi$, and consider the corresponding 
automorphism $\tau_\chi\in\cT$. For any $\alpha\in \Phi^+$, and $x_\alpha\in\cL_\alpha$,
\[
\tau_\chi(x_\alpha+\sigma(x_\alpha))=\chi(\alpha)x_\alpha +\chi(\alpha)^{-1}\sigma(x_\alpha)
\in\begin{cases}
 \cL_{(\alpha + E,0)}&\text{if $\chi^2(\alpha)=\tilde\xi(\alpha)=1$,}\\
 \cL_{(\alpha + E,1)}&\text{if $\chi^2(\alpha)=\tilde\xi(\alpha)=-1$.}
 \end{cases}
 \]
 Moreover, if $\beta\in\Phi^+$ is another positive root with $\beta-\alpha\in E$, then 
 $\chi^2(\alpha)=\chi^2(\beta)$, because $\tilde\xi(E)=1$. Therefore, because of \eqref{eq:homogeneousGamma},
  $\tau_\chi$ is an automorphism of $\Gamma$ such that 
  $\tau_\chi(\cL_{(r+E,0)})$ is contained in 
 $\cL_{(r+E,\xi(r+E))}$ for any $r\in R$, so that its projection on $W(\Gamma)$ is
 \[
 \left(
 \begin{array}{c|c}
\quad I_l\quad &\begin{matrix} 0\\[-3pt] \vdots \\ 0\end{matrix}\\
\hline\\[-8pt]
\xi &1
\end{array}
\right),
\]
thus proving our result.
\end{proof}

Any automorphism of the root system $\Phi$ preserves $2R$, and hence the induced group 
homomorphism in \eqref{eq:rhoAutPhiGLR}:
\begin{equation*} 
\rho\colon \Aut(\Phi)\longrightarrow \GL(\oR),
\end{equation*}
gives an action of $\Aut(\Phi)$ on $\oR$. Write $\oE=E/2R$.

Any $\mu\in \Stab_{\Aut(\Phi)}(\oE)$ induces an automorphism of $R/E\simeq \oR/\oE$. 
Thus $\rho$ induces a group homomorphism
\begin{equation}\label{eq:rho'}
\begin{split}
\rho'\colon \Stab_{\Aut(\Phi)}(\oE)&\longrightarrow \GL(R/E)\\
   \mu&\mapsto \mu'.
\end{split}
\end{equation}
Consider the composition 
\begin{equation}\label{eq:upsilon}
\upsilon\colon G\xrightarrow{\phantom{nat}}\oG\xrightarrow{\pi_E^{-1}}R/E,
\end{equation} 
where the first map is the natural homomorphism and the second is the inverse of the isomorphism
$\pi_E$ in \eqref{eq:piE}.
It induces a surjective group homomorphism 
\[
\tilde\upsilon\colon \Stab_{\Aut(G)}(s)\rightarrow \GL(R/E).
\]
We get the diagram 
\[
\begin{tikzcd}
W_s(\Gamma) \arrow[r, hook] &\Stab_{\Aut(G)}(s) \arrow[d, "\tilde\upsilon"]\\
\Stab_{\Aut(\Phi)}(\oE) \arrow[r, "{\rho'}"] & \GL(R/E)  
\end{tikzcd}
\]

The next result extends \cite[4.8]{Hesselink}.

\begin{theorem}\label{th:WsGamma}
Let $\Gamma$ be a special pure grading on the semisimple Lie algebra $\cL$,
over an algebraically closed field of characteristic $0$, with universal  group
$G$. Let $s\in G$ be an element such that $\cL_s$ contains a Cartan subalgebra $\cH$. Then, with
the notations above, the stabilizer of $s$ in the Weyl group $W(\Gamma)$ is obtained as follows:
\[
W_s(\Gamma)={\tilde\upsilon}^{-1}(\im \rho'). 
\]
In other words, $W_s(\Gamma)$ consists of the matrices in 
\eqref{eq:GLRE_HomREF2} with arbitrary lower left corner, and where
the upper left corner (in $\GL(R/E)$) is induced by an automorphism of
$\Phi$ that preserves $\oE$.
\end{theorem}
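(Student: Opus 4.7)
The plan is to prove the two inclusions in the claimed equality separately, leveraging the explicit description \eqref{eq:homogeneousGamma} of the homogeneous components and Lemma~\ref{le:WsGamma} for the ``translation'' part of the matrix \eqref{eq:GLRE_HomREF2}. The first step is the structural observation that $\cL_s=\cH$: since $G$ is elementary abelian of exponent $2$ and $\Gamma$ is special, $2s=e$ forces $[\cL_s,\cL_s]\subseteq \cL_{2s}=\cL_e=0$, so $\cL_s$ is abelian; by \cite[3.6~Proposition]{Hesselink} every homogeneous element in a special grading is semisimple, so $\cL_s$ is a toral subalgebra, and since it contains the Cartan subalgebra $\cH$, maximality forces equality.

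For the inclusion $W_s(\Gamma)\subseteq \tilde\upsilon^{-1}(\im\rho')$, I would take a representative $\varphi\in\Aut(\Gamma)$ of an arbitrary $w\in W_s(\Gamma)$. From $w(s)=s$ and $\cL_s=\cH$ we obtain $\varphi(\cH)=\cH$, so $\varphi$ induces some $\mu\in\Aut(\Phi)$ via $\varphi(\cL_\alpha)=\cL_{\mu(\alpha)}$. Because $\varphi$ permutes the homogeneous components of the coarsening $\overline\Gamma$ described in \eqref{eq:homogeneousGammabar}, the map $\mu$ must preserve the equivalence $\alpha\sim\beta\iff \alpha-\beta\in E$ on $\Phi$; combined with \eqref{eq:Ecirc} and $\Phi\cap E=\emptyset$, this yields $\mu(E)=E$, so $\mu\in\Stab_{\Aut(\Phi)}(\oE)$, and tracking the first coordinate in \eqref{eq:GRE2} gives $\tilde\upsilon(w)=\rho'(\mu)\in\im\rho'$.

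For the reverse inclusion, any matrix in $\tilde\upsilon^{-1}(\im\rho')$ factors as $\matr{\mu'&0\\0&1}\matr{I_l&0\\\xi&1}$, and Lemma~\ref{le:WsGamma} supplies the right factor, so it suffices to realize $\matr{\mu'&0\\0&1}$ for each $\mu\in\Stab_{\Aut(\Phi)}(\oE)$. My plan is to start from an arbitrary lift $\varphi_0\in\Aut(\cL)$ of $\mu$ with $\varphi_0(\cH)=\cH$, and then twist it so that the lift commutes with $\sigma$. Both $\sigma$ and $\varphi_0\sigma\varphi_0^{-1}$ restrict to $-\id$ on $\cH$, so $\varphi_0\sigma\varphi_0^{-1}=\tau_0\sigma$ for a unique $\tau_0\in\cT_2$; since $\sigma$ inverts $\cT$, one computes $(\tau_1\varphi_0)\sigma(\tau_1\varphi_0)^{-1}=\tau_1^2\tau_0\sigma$, so choosing $\tau_1\in\cT$ with $\tau_1^2=\tau_0^{-1}$---possible because $\FF$ is algebraically closed---produces the required commuting lift $\varphi\bydef\tau_1\varphi_0$. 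This compatibility twist is the main obstacle; once it is in hand, a short case analysis on whether $\mu(\alpha)$ is positive or negative, using \eqref{eq:homogeneousGamma}, shows that $\varphi$ sends $\cL_{(r+E,i)}$ onto $\cL_{(\mu'(r+E),i)}$ for all $r\in R$ and $i\in\FF_2$, producing precisely the required matrix and fixing $s=(0,1)$.
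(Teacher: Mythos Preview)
Your argument is essentially the same as the paper's: both establish $\cL_s=\cH$, derive $\mu\in\Stab_{\Aut(\Phi)}(\oE)$ from a representative of $w\in W_s(\Gamma)$ via \eqref{eq:Ecirc}, and for the reverse direction lift $\mu$ to some $\varphi_0$ with $\varphi_0(\cH)=\cH$ and twist by a square root in $\cT$ to force commutation with $\sigma$, then invoke Lemma~\ref{le:WsGamma}. One small slip: the element $\tau_0$ with $\varphi_0\sigma\varphi_0^{-1}=\tau_0\sigma$ lies in $\cT$, not necessarily in $\cT_2$ (the relation $(\tau_0\sigma)^2=\id$ gives no constraint because $\sigma\tau_0\sigma^{-1}=\tau_0^{-1}$), but this is harmless since any element of $\cT$ has a square root; also, your observation that the $\sigma$-commuting lift yields exactly $\xi=0$ is correct and slightly sharper than what the paper records in its Step~3.
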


\begin{proof}
Note that $\cL_s=\cH$ holds, because of Proposition~\ref{pr:Phi_cap_E} and 
Equation~\eqref{eq:homogeneousGamma}.
We proceed in several steps.

\smallskip

\noindent\textbf{1)}\quad
Any $\varphi\in\Aut(\Gamma)$ preserving $\cL_s=\cH$ induces an automorphism 
$\mu_\varphi\in\Aut(\Phi)$ with $\varphi(\cL_\alpha)=\cL_{\mu_\varphi(\alpha)}$ for any $\alpha\in\Phi$.

Since $\Gamma$ is special, we have $\Phi^+\cap E=\emptyset$. For any $\alpha,\beta\in\Phi^+$ with 
$\alpha-\beta\in E$, the elements $x_\alpha+\sigma(x_\alpha)$ and $x_\beta+\sigma(x_\beta)$ are in
the same homogeneous component of $\Gamma$, and hence so are their images under $\varphi$. Thus
$\mu_\varphi(\alpha)-\mu_\varphi(\beta)$ is in $E$. Equation~\eqref{eq:Ecirc} shows 
$\mu_\varphi(E) \subseteq E$, which is equivalent to $\mu_\varphi\in \Stab_{\Aut(\Phi)}(\oE)$.

Note that the image under $\tilde\upsilon$ of the element of $W_s(\Gamma)$ determined by $\varphi$
is the image under $\rho'$ of $\mu_\varphi$.

\medskip

\noindent\textbf{2)}\quad
Conversely, for any $\mu\in \Stab_{\Aut(\Phi)}(\oE)$,  take an element $\varphi\in \Stab_{\Aut(\cL)}(\cH)$ with $\mu_\varphi=\mu$ (\cite[14.2]{Humphreys}).
Note that we have $\mu(E)=E$ and $\varphi\in\Aut(\oGamma)$ (recall the homogeneous components
of $\oGamma$ in \eqref{eq:homogeneousGammabar}).

For any $\tau\in\Diag(\Gamma)\cap \cT=\Diag(\Gamma)\cap\cT_2=\Diag(\oGamma)$, we get
$\varphi\tau\varphi^{-1}\in\Diag(\oGamma)$. Consider the automorphism 
$\sigma'=\varphi\sigma\varphi^{-1}$, which satisfies $\sigma'\vert_{\cH}=-\id$, so there exists
an element $\tau\in\cT$ such that $\sigma'=\tau\sigma$. Take $\tilde\tau\in\cT$ with $\tilde\tau^2=\tau$.
Then $\sigma'=\tau\sigma=\tilde\tau^2\sigma=\tilde\tau\sigma\tilde\tau^{-1}$. Therefore we have
$\varphi\sigma\varphi^{-1}=\tilde\tau\sigma\tilde\tau^{-1}$ and $\tilde\tau^{-1}\varphi$ commutes
with $\sigma$, stabilizes $\cH$ and satisfies $\mu_{\tilde\tau^{-1}\varphi}=\mu_\varphi=\mu$.

Replacing $\varphi$ by $\tilde\tau^{-1}\varphi$, we may assume $\varphi\sigma=\sigma\varphi$, and 
hence we get $\varphi\in\Aut(\Gamma)$ with $\varphi(\cL_s)=\cL_s$, because the homogeneous
components of $\Gamma$ (Equation~\eqref{eq:homogeneousGamma}) are the eigenspaces for $\sigma$
of the homogeneous components of $\oGamma$.

\medskip

\noindent\textbf{3)}\quad
For any $\varphi\in\Aut(\Gamma)$ preserving $\cL_s=\cH$ we obtain the following elements:
\[
\mu_\varphi\in\Stab_{\Aut(\Phi)}(\oE),\quad
\rho'(\mu_\varphi)=\mu_\varphi'\in \GL(R/E),
\]
and for any $\alpha\in\Phi^+$ we have
\[
\varphi(\cL_{(\alpha+E,0)})=\cL_{(\mu_\varphi'(\alpha+E),0\ \text{or}\ 1)}.
\]
The associated element in $W_s(\Gamma)$ is then, considered as an element of 
$\GL(R/E\times \FF_2)$,  of the form
\[
 \left(
 \begin{array}{c|c}
\quad \mu_\varphi' \quad &\begin{matrix} 0\\[-3pt] \vdots \\ 0\end{matrix}\\
\hline\\[-8pt]
\xi &1
\end{array}
\right),
\]
for some $\xi\in\Hom(R/E,\FF_2)$. But step \textbf{2)} shows that given any element 
$\mu'=\rho'(\mu)$ with $\mu\in \Stab_{\Aut(\Phi)}(\oE)$, there is an element 
$w\in W_s(\Gamma)$ of the form
\[
 \left(
 \begin{array}{c|c}
\quad \mu' \quad &\begin{matrix} 0\\[-3pt] \vdots \\ 0\end{matrix}\\
\hline\\[-8pt]
\xi &1
\end{array}
\right),
\]
for some $\xi\in\Hom(R/E,\FF_2)$. Now Lemma~\ref{le:WsGamma} completes the proof.
\end{proof}

\bigskip

%%%%%%%%%%%%%%%%%%%%%%%%%%%%

\section{Special pure gradings on simple Lie algebras of types 
\texorpdfstring{$E_6$}{E6}, \texorpdfstring{$E_7$}{E7}, or \texorpdfstring{$E_8$}{E8}}\label{se:SpecialpureE}

The aim of this section is the classification, up to equivalence, of the 
special pure
gradings on the simple Lie algebras of type $E_6$, $E_7$, and $E_8$.
We will make use of Proposition~\ref{pr:CartanDieudonne} that relates
the Weyl groups of   these algebras with suitable orthogonal groups
over the field of $2$ elements. 

We start with $E_8$.

\begin{theorem}\label{th:E8}
Up to equivalence, there are exactly five special pure gradings on a simple Lie algebra of type $E_8$
over an algebraically closed field of characteristic $0$. Their 
universal groups and types are given in the next table:
\begin{center}
\begin{tabular}{ccc}
\bigstrut  & Universal group & Type\\
\hline\hline
\bigstrut  $\Gamma_{E_8}^9$ &$\ZZ_2^9$&$(240,0,0,0,0,0,0,1)$\\
\hline
\bigstrut  $\Gamma_{E_8}^8$ &$\ZZ_2^8$&$(128,56,0,0,0,0,0,1)$\\
\hline
\bigstrut $\Gamma_{E_8}^7$ &$\ZZ_2^7$&$(0,96,0,12,0,0,0,1)$\\
\hline
\bigstrut $\Gamma_{E_8}^6$ &$\ZZ_2^6$&$(0,0,0,56,0,0,0,3)$\\
\hline
\bigstrut $\Gamma_{E_8}^5$ &$\ZZ_2^5$&$(0,0,0,0,0,0,0,31)$\\
\hline
\end{tabular}
\end{center}
\end{theorem}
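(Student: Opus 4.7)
The plan is to translate the classification into orthogonal geometry over $\FF_2$ and apply Witt's extension theorem. By the dictionary established in Section~\ref{se:SpecialPure}, any special pure grading $\Gamma$ on $\cL$ (with universal group $G$ and fixed $s\in G$ such that $\cL_s=\cH$ by Proposition~\ref{pr:Phi_cap_E}) is determined up to equivalence by the complementary lattice $E$ with $2R\subseteq E\subseteq R$ and $\Phi\cap E=\emptyset$, or equivalently by $\overline{E}:=E/2R\subseteq\overline{R}$. In type $E_8$ the nonisotropic vectors of $q$ are exactly the classes $\bar\alpha$ for $\alpha\in\Phi$, so the condition $\Phi\cap E=\emptyset$ is equivalent to $q|_{\overline{E}}=0$, i.e., $\overline{E}$ is \emph{totally singular}. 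Two such gradings are equivalent iff the corresponding subspaces lie in the same orbit under $\Aut(\Phi)$: the conjugacy of Cartan subalgebras together with the flexibility of the involution $\sigma$ (any two choices differ by an element of $\cT_2$, yielding an equivalent grading) gives one direction, and conversely every $\mu\in\Aut(\Phi)$ lifts to an automorphism of $\cL$ preserving $\cH$, which can then be adjusted to commute with $\sigma$ as in step~2 of the proof of Theorem~\ref{th:WsGamma}.

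By Proposition~\ref{pr:CartanDieudonne}, the action of $\Aut(\Phi)$ on $\overline{R}$ factors through $\Ort(q)$, and by Witt's extension theorem the $\Ort(q)$-orbits of totally singular subspaces of $\overline{R}$ are classified by dimension. Since $q$ is regular with $\Arf(q)=0$ in dimension $8$, its Witt index is $4$, so the possible dimensions $k:=\dim_{\FF_2}\overline{E}$ are $0,1,2,3,4$, giving exactly five equivalence classes. The universal group in each case is $G\simeq R/E\times\FF_2\simeq\ZZ_2^{9-k}$, matching the second column of the table.

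The remaining, and most laborious, step is to compute the type in each case. From \eqref{eq:homogeneousGamma}, for each nonzero coset $\bar r+\overline{E}$, the components $\cL_{(r+E,0)}$ and $\cL_{(r+E,1)}$ both have dimension $|\Phi^+\cap(r+E)|$, which equals the number of nonisotropic vectors in the coset $\bar r+\overline{E}$, since in $E_8$ the map $\alpha\mapsto\bar\alpha$ is a bijection from $\Phi^+$ onto the set of nonisotropic vectors of $\overline{R}$ (there are $\binom{2^4}{2}=120$ of them, by Section~\ref{subse_quadratic}). Restricted to a coset, $q$ is the affine function $v\mapsto q(\bar r)+b_q(\bar r,v)$, so the coset contains either $0$, $2^k$, or $2^{k-1}$ nonisotropic vectors according to whether $\bar r\in\overline{E}^\perp$ with $q(\bar r)=0$, with $q(\bar r)=1$, or $\bar r\notin\overline{E}^\perp$.

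Finally, the number of cosets of each kind is determined by the induced quadratic form on $\overline{E}^\perp/\overline{E}$. A Witt decomposition $\overline{R}\simeq(H_1\obot\cdots\obot H_k)\obot W$ with $\overline{E}=\spanfrak{e_1,\ldots,e_k}$ identifies $\overline{E}^\perp/\overline{E}$ with the regular space $W$ of dimension $8-2k$, whose Arf invariant equals $\Arf(q)=0$ (hyperbolic planes have trivial Arf invariant). Using the explicit counts of isotropic and nonisotropic vectors from Section~\ref{subse_quadratic}, the numbers of cosets of $\overline{E}$ in $\overline{E}^\perp$ (resp.\ in $\overline{R}\setminus\overline{E}^\perp$) of each kind are straightforward to compute for every $k\in\{0,1,2,3,4\}$. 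Doubling these counts (to account for the $\FF_2$-component) and prepending $\cL_s=\cH$ of dimension $8$ yields the five types displayed in the table.
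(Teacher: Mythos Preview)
Your proof is correct and follows essentially the same approach as the paper: both reduce to classifying totally isotropic subspaces $\overline{E}\subseteq\overline{R}$ up to the action of $\Ort(q)$, invoke Witt's theorem to get one orbit per dimension $k\in\{0,1,2,3,4\}$, and then count nonisotropic vectors in each coset of $\overline{E}$ to determine the type. The paper carries out the five cases explicitly, while you package the counting into a uniform formula (the trichotomy $0$, $2^k$, $2^{k-1}$ according to whether $\bar r\in\overline{E}^\perp$ and $q(\bar r)$), which is a mild stylistic improvement; conversely, the paper avoids your equivalence argument entirely by simply observing that the five computed types are pairwise distinct, so the five gradings are automatically inequivalent. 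One small LaTeX issue: the macro \verb|\spanfrak| is not defined in the paper.
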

\begin{proof}
The quadratic form $q\colon\oR\rightarrow \FF_2$ in \eqref{eq:q} is regular with trivial Arf invariant. Moreover,
the nonisotropic vectors of $\oR$ are precisely the $120$ vectors $\bar\alpha\bydef\alpha+2R$ for 
$\alpha\in\Phi^+$. By Proposition~\ref{pr:Phi_cap_E},
 the complementary lattice $E$ satisfies that $\oE=E/2R$ is a totally isotropic subspace of 
$\oR$, and hence it has dimension at most $4$. Moreover, for any dimension up to $4$, any two totally 
isotropic subspaces of $\oR$ are conjugate under the action of $\Ort(q)$. As a consequence, we get 
the following possibilities, according to \eqref{eq:homogeneousGamma}:
\begin{description}
\item[$\dim\oE=0$] Here the universal group $R/E\times\FF_2\simeq \oR\times\FF_2$ 
(see \eqref{eq:GRE2}) is isomorphic to $\ZZ_2^9$, the homogeneous space $\cL_{(0,1)}$ is a Cartan 
subalgebra, and all the other homogeneous components are one-dimensional. Hence, the type
is $(240,0,0,0,0,0,0,1)$.

\smallskip

\item[$\dim\oE=1$] In this case the universal  group is isomorphic to $\ZZ_2^8$. There is an
isotropic vector $u$ with $\oE=\FF_2 u$. 
If we fix an 
element $v$ with $b_q(u,v)=1$, then $u$ and $v$ span a hyperbolic plane $H$,
and $\oR=H\obot H^\perp$, so $0=\Arf(q)=\Arf(q\vert_H)+\Arf(q\vert_{H^\perp})=\Arf(q\vert_{H^\perp})$. 
Moreover, $q$ induces a quadratic form on the six-dimensional space
$\oE^\perp/\oE$, which is isometric to $H^\perp$ and hence has
 trivial Arf invariant. Thus, $\oE^\perp/\oE$ contains $28$ nonisotropic vectors. 
 Besides, for any $v\in \oR$ with $q(v)=1$, we have $q(v+u)=1$ if
and only if $b_q(u,v)=0$, that is, $v\in\oE^\perp$, so
 there are $28$ pairs of positive roots $(\alpha,\beta)$ such that $\alpha+E=\beta +E$ 
 ($\alpha+2R=v\in\oE^\perp$, $\beta+2R=u+v$).
This gives $56$ two-dimensional homogeneous components in \eqref{eq:homogeneousGamma}.
For the remaining positive roots, that is, those 
$\gamma\in\Phi^+$ such that $\bar\gamma\not\in \oE^\perp$, the corresponding 
homogeneous components in \eqref{eq:homogeneousGamma} are one-dimensional. Therefore, the 
type is $(128,56,0,0,0,0,0,1)$.

\smallskip

\item[$\dim\oE=2$] Here, if a positive root $\alpha\in\Phi^+$ satisfies $b_q(\bar\alpha,\oE)=0$, then
$\bar\alpha+\oE$ consists of $4$ nonisotropic vectors. Since $\oE^\perp/\oE$ is a regular quadratic
space with trivial Arf invariant of dimension $4$, it contains $6$ nonisotropic vectors, and thus
$\oE^\perp$ contains $6$ left cosets of the form $\bar\alpha+\oE$. This means that there are 
$12$ homogeneous spaces of dimension $4$ in \eqref{eq:homogeneousGamma}.

On the other hand, there are
$120-24=96$ positive roots $\alpha\in\Phi^+$ with $b_q(\bar\alpha,\oE)\neq 0$, and then the left coset
$\bar\alpha+\oE$ contains just two nonisotropic vectors, thus obtaining $48$ pairs of positive roots, which 
give $96$ homogeneous components in \eqref{eq:homogeneousGamma} of dimension $2$. Hence, the
type is $(0,96,0,12,0,0,0,1)$.

\smallskip

\item[$\dim\oE=3$] In this case, the quadratic space $\oE^\perp/\oE$ is a regular quadratic space
with trivial Arf invariant of dimension $2$, so it contains a unique nonisotropic element. Thus, 
if $\alpha\in\Phi^+$ is in $\oE^\perp$, the coset $\bar\alpha+\oE$ consists of the $8$ nonisotropic vectors
in $\oE^\perp$. This means that the homogeneous components $\cL_{(\alpha+E,\bar 0)}$, 
$\cL_{(\alpha+E,\bar 1)}$, and $\cL_{(E,\bar 1)}$ are eight-dimensional, and actually Cartan 
subalgebras, because the homogeneous components in a special grading on a semisimple Lie algebra are always
toral \cite[3.8 Theorem]{Hesselink}.

For any of the remaining $112$ positive roots $\alpha\in\Phi^+$ with 
$b_q(\bar\alpha,\oE)\neq 0$, the coset $\bar\alpha+\oE$ contains $4$ nonisotropic vectors. This forces all the
other homogeneous components in \eqref{eq:homogeneousGamma} to be $4$-dimensional, so 
that the type is $(0,0,0,56,0,0,0,3)$.

\smallskip

\item[$\dim\oE=4$] Here $\oE^\perp=\oE$, and all the cosets $\bar\alpha+\oE$ contain $8$ nonisotropic
vectors. Thus, all homogeneous components have dimension $8$ and the type is
$(0,0,0,0,0,0,0,31)$. \qedhere
\end{description}
\end{proof}

\begin{theorem}\label{th:E7}
Up to equivalence, there are exactly four special pure gradings on a simple Lie algebra of type $E_7$
over an algebraically closed field of characteristic $0$. 
Their universal  groups and types are given in the next table:
\begin{center}
\begin{tabular}{ccc}
\bigstrut & Universal group & Type\\
\hline\hline
\bigstrut $\Gamma_{E_7}^8$&$\ZZ_2^8$&$(126,0,0,0,0,0,1)$\\
\hline
\bigstrut  $\Gamma_{E_7}^7$&$\ZZ_2^7$&$(66,30,0,0,0,0,1)$\\
\hline
\bigstrut  $\Gamma_{E_7}^6$&$\ZZ_2^6$&$(0,48,2,6,0,0,1)$\\
\hline
\bigstrut  $\Gamma_{E_7}^5$&$\ZZ_2^5$&$(0,0,0,28,0,0,3)$\\
\hline
\end{tabular}
\end{center}
\end{theorem}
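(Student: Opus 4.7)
The strategy parallels the proof of Theorem~\ref{th:E8}, adapted to the odd-dimensional regular quadratic form $q$ on $\oR=R/2R$. Recall from Section~\ref{subse_quadratic} that in type $E_7$ we have $\dim\oR=7$, the polarization $b_q$ has a $1$-dimensional radical $\rad b_q=\FF_2 a$, and the nonisotropic vectors of $\oR$ are precisely $a$ together with the $63$ images $\bar\alpha$ for $\alpha\in\Phi^+$. By Proposition~\ref{pr:CartanDieudonne}, the effective action of the Weyl group on $\oR$ is the full orthogonal group $\Ort(q)$, since the kernel $\{\pm I\}$ acts trivially.

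First I would classify the complementary lattices. By Proposition~\ref{pr:Phi_cap_E}, $\Gamma$ is special and pure if and only if $\oE:=E/2R$ is a totally isotropic subspace of $(\oR,q)$. Since $a$ is nonisotropic, $a\notin\oE$, and the Witt index of $q$ equals $m=3$, so $0\le \dim\oE\le 3$. Witt's extension theorem shows that any two totally isotropic subspaces of the same dimension are conjugate under $\Ort(q)$, yielding exactly four equivalence classes of special pure gradings, one for each value of $\dim\oE\in\{0,1,2,3\}$. The universal group $G\simeq R/E\times\FF_2$ then has $\FF_2$-dimension $8-\dim\oE$, matching the table, and distinctness of the four classes is immediate from the distinct universal groups.

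Next, for each value of $\dim\oE$, I would compute the type from \eqref{eq:homogeneousGamma}: the Cartan subalgebra $\cH$ contributes one component of dimension $7$, and each nontrivial coset $r+E$ in $R/E$ contributes two components, each of dimension $|\Phi^+\cap(r+E)|$. The key combinatorial tool is that for $v\in\oR$, the linear form $u\mapsto b_q(v,u)$ on $\oE$ governs the distribution of $q$-values over the coset $v+\oE$: cosets inside $\oE^\perp$ are \emph{pure} (all elements share a common $q$-value), whereas cosets outside $\oE^\perp$ split evenly between isotropic and nonisotropic elements. Combined with the fact that the unique nonisotropic vector of $\oR$ which is not a root-image is $a$, the cosets fall into four families: the identity coset $E$, accounting for $\cH$; the distinguished coset $a+\oE\subseteq\oE^\perp$, which carries $2^{\dim\oE}-1$ positive roots owing to the ``lost'' element $a$; the remaining nonisotropic cosets of $\oE^\perp/\oE$, each carrying $2^{\dim\oE}$ positive roots; and the mixed cosets outside $\oE^\perp$, each carrying $2^{\dim\oE-1}$ positive roots.

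The main obstacle, although routine, is the bookkeeping for $\dim\oE\ge 1$: the asymmetric entries of the type come precisely from the short coset $a+\oE$. For instance, in case $\dim\oE=2$ this accounts for $n_3=2$, while $n_4=6$ comes from the three other nonisotropic cosets in $\oE^\perp/\oE$ and $n_2=48$ from the $24$ mixed cosets; in case $\dim\oE=3$ the coset $a+\oE$ exhausts $\oE^\perp\setminus\oE$, producing two additional Cartan subalgebras of dimension $7$ alongside $\cH$. A verification that $\sum_i i\cdot n_i=133=\dim E_7$ in each case confirms the tabulated types.
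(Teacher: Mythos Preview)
Your approach mirrors the paper's, and the combinatorial analysis of the cosets (pure versus mixed, the distinguished coset $a+\oE$, the resulting types) is correct. However, there is one genuine gap at the very start.

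You write: ``By Proposition~\ref{pr:Phi_cap_E}, $\Gamma$ is special and pure if and only if $\oE$ is a totally isotropic subspace of $(\oR,q)$.'' This equivalence holds for $E_6$ and $E_8$, where the nonisotropic vectors are \emph{exactly} the root-images $\bar\alpha$, but it fails for $E_7$. Here the nonisotropic vectors are $\{\bar\alpha\mid\alpha\in\Phi^+\}\cup\{a\}$, and Proposition~\ref{pr:Phi_cap_E} only tells you that $\oE$ contains no $\bar\alpha$; it does \emph{not} tell you that $a\notin\oE$. Your sentence ``Since $a$ is nonisotropic, $a\notin\oE$'' is then circular: you are deducing $a\notin\oE$ from the totally isotropic claim you have not yet established.

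Concretely, the subspace $\oE=\FF_2 a=\rad b_q$ satisfies $\Phi\cap E=\emptyset$ (any larger subspace containing $a$ would pick up a root-image, but this one does not), so it passes the test of Proposition~\ref{pr:Phi_cap_E}. What rules it out is the universal group condition $E=E^\circ$ in \eqref{eq:Ecirc}: since $q(\bar\alpha+a)=q(\bar\alpha)+q(a)+b_q(\bar\alpha,a)=1+1+0=0$ for every root $\alpha$, no difference $\bar\alpha-\bar\beta$ of distinct positive roots equals $a$, and hence the right-hand side of \eqref{eq:Ecirc} reduces to $2R\subsetneq E$. In other words, the grading obtained from this choice of $E$ is not graded by its universal group---it is just $\Gamma_{E_7}^8$ viewed over the quotient $\oR/\rad b_q\times\FF_2$. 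The paper handles this point explicitly before launching into the case analysis; you need to as well, or you have not shown that the four totally isotropic classes exhaust the special pure gradings.
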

\begin{proof}
The quadratic form $q\colon\oR\rightarrow \FF_2$ in \eqref{eq:q} is regular, but odd-dimensional. 
The radical of $b_q$ is one-dimensional: $\rad b_q=\FF_2a$, with $q(a)=1$. Moreover,
the nonisotropic vectors of $\oR$ are precisely the $63$ vectors $\bar\alpha\bydef \alpha+2R$ for 
$\alpha\in\Phi^+$ together with $a$. Any subspace of $\oR$ containing strictly $\rad b_q$ contains
other nonisotropic vectors besides $a$, so it is not of the form $\oE=E/2R$ with $E$ the complementary
lattice of a special pure grading. Also, the lattice $F$ with $\oF=\rad b_q$ cannot be the 
complementary lattice of a special pure grading, graded by its universal group, because
for any two different positive roots $\alpha,\beta\in\Phi^+$, $\bar\alpha-\bar\beta\neq a$, as
otherwise we would have $1=q(\bar\beta)=q(\bar\alpha+a)=q(\bar\alpha)+q(a)=1+1=0$, and then
\eqref{eq:Ecirc} is not fulfilled. Therefore, the complementary lattice $E$ of a special pure grading
satisfies that $\oE$ is a totally isotropic subspace of $\oR$ and, in particular, it does not contain
$\rad b_q$.
 
As a consequence, we get 
the following possibilities, according to \eqref{eq:homogeneousGamma}:
\begin{description}
\item[$\dim\oE=0$] As for $E_8$, the universal group is isomorphic to $\ZZ_2^8$ and the type
is $(126,0,0,0,0,0,1)$.

\item[$\dim\oE=1$] Here the quadratic space $\oE^\perp/\oE$ is regular of dimension $5$, so it contains
$16$ nonisotropic elements, one of them being $a+\oE$. For each $\alpha\in\Phi^+$ with 
$\bar\alpha\in\oE^\perp$, the coset $\bar\alpha+\oE$ consists of two nonisotropic vectors, so either
$\bar\alpha+\oE=\{\bar\alpha,\bar\beta\}$ for another positive root $\beta$, or 
$\bar\alpha+\oE=\{\bar\alpha,a\}$, which happens only for $\bar\alpha=a+u$, with $\oE=\FF_2u$. Hence
there are $15$ pairs of positive roots that come together in \eqref{eq:homogeneousGamma}. The other
$35=63-30$ positive roots give rise to one-dimensional homogeneous components and the type is
$(66,30,0,0,0,0,1)$.

\smallskip

\item[$\dim\oE=2$] In this case the quadratic space $\oE^\perp/\oE$ is regular of dimension $3$, so
it contains $4$ nonisotropic elements, one of them being $a+\oE$. For each $\alpha\in\Phi^+$ with 
$\bar\alpha\in\oE^\perp$, the coset $\bar\alpha+\oE$ consists of four nonisotropic vectors, which come
from four positive roots unless $\bar\alpha+\oE=a+\oE$ that contains three nonisotropic vectors
obtained from positive roots. For the remaining $48=63-15$ positive roots $\alpha$ with 
$\bar\alpha\not\in \oE^\perp$, the coset $\bar\alpha+\oE$ contains two nonisotropic vectors. The type
is then $(0,48,2,6,0,0,1)$.

\smallskip

\item[$\dim\oE=3$] Here the quadratic space $\oE^\perp/\oE$ is regular of dimension $1$. The
nonisotropic vectors of $\oE^\perp$ form the left coset $a+\oE$, that contains eight nonisotropic vectors of $\oR$:
the element $a$ itself, and seven elements of the form $\bar\alpha$ with $\alpha\in\Phi^+$.  Pick one
of them, so that $a+\oE=\bar\alpha+\oE$, then
the homogeneous components $\cL_{(\alpha+E,\bar 0)}$, 
$\cL_{(\alpha+E,\bar 1)}$, and $\cL_{(E,\bar 1)}$ are seven-dimensional Cartan subalgebras.

For the remaining $56=63-7$ positive roots $\alpha$ with 
$\bar\alpha\not\in \oE^\perp$, the coset $\bar\alpha+\oE$ contains four nonisotropic vectors. The type
is then $(0,0,0,28,0,0,3)$. \qedhere
\end{description}
\end{proof}

\begin{theorem}\label{th:E6}
Up to equivalence, there are exactly three special pure gradings on a simple Lie algebra of type $E_6$
over an algebraically closed field of characteristic $0$. 
Their universal groups and types are given in the next table:
\begin{center}
\begin{tabular}{ccc}
\bigstrut & Universal group & Type\\
\hline\hline
\bigstrut  $\Gamma_{E_6}^7$&$\ZZ_2^7$&$(72,0,0,0,0,1)$\\
\hline
\bigstrut $\Gamma_{E_6}^6$&$\ZZ_2^6$&$(32,20,0,0,0,1)$\\
\hline
\bigstrut $\Gamma_{E_6}^5$&$\ZZ_2^5$&$(0,24,0,6,0,1)$\\
\hline
\end{tabular}
\end{center}
\end{theorem}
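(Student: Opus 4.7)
The plan is to imitate the arguments used for Theorems~\ref{th:E8} and \ref{th:E7}. For type $E_6$, the quadratic form $q\colon\oR\to\FF_2$ defined in \eqref{eq:q} is regular of dimension $6$ with $\Arf(q)=1$, so by the description in Section~\ref{subse_quadratic} its Witt index is $2$. The nonisotropic vectors of $\oR$ are exactly the $36$ images $\bar\alpha$ of the positive roots, so by Proposition~\ref{pr:Phi_cap_E} the complementary lattice $E$ satisfies that $\oE=E/2R$ is totally isotropic, whence $\dim\oE\in\{0,1,2\}$.

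Witt's Extension Theorem then yields that totally isotropic subspaces of a given dimension are conjugate under $\Ort(\oR,q)$, and Proposition~\ref{pr:CartanDieudonne} identifies $\Ort(\oR,q)$ with the image of $\Aut(\Phi)$ in $\GL(\oR)$. Hence the three values of $\dim\oE$ yield at most three equivalence classes of special pure gradings, and since the types computed below depend only on $\dim\oE$ and are pairwise distinct, they are exactly three.

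In each case, the type is read off from \eqref{eq:homogeneousGamma}: $\cL_{(r+E,\bar 0)}$ (and $\cL_{(r+E,\bar 1)}$ when $r\notin E$) has dimension equal to the number of positive roots $\alpha$ with $\bar\alpha\in r+\oE$, i.e., to the number of nonisotropic vectors in the coset $r+\oE$, while $\cL_{(E,\bar 1)}=\cH$ contributes the unique $6$-dimensional component. The case $\dim\oE=0$ is immediate and yields $72$ one-dimensional components besides $\cH$. For $\dim\oE\in\{1,2\}$, the key observation is that the quadratic form induced on $\oE^\perp/\oE$ has Arf invariant equal to $\Arf(q)=1$, which follows by splitting off hyperbolic planes containing a basis of $\oE$ and using the additivity of the Arf invariant recalled in Section~\ref{subse_quadratic}. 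The numbers of nonisotropic classes in $\oE^\perp/\oE$ are then $10$ in dimension $4$ (for $\dim\oE=1$) and $3$ in dimension $2$ (for $\dim\oE=2$); a standard parity count for cosets with $\bar\alpha\notin\oE^\perp$ (using that the linear map $u\mapsto b_q(\bar\alpha,u)$ on $\oE$ is nonzero and hence exactly half of the shifts produce nonisotropic vectors) fills in the remaining cosets and yields the tabulated types.

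The whole argument is routine once the framework of Theorems~\ref{th:E8} and \ref{th:E7} is in place; the one point requiring some care, and the main bookkeeping step, is the determination of the Arf invariant of the induced form on $\oE^\perp/\oE$, but this poses no real obstacle. A final sanity check that $72+6=32+40+6=48+24+6=78=\dim\cL$ in the three cases confirms the computation.
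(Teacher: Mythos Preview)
Your proposal is correct and follows essentially the same approach as the paper's own proof: both reduce to the case analysis on $\dim\oE\in\{0,1,2\}$ via Proposition~\ref{pr:Phi_cap_E} and the Witt index, use conjugacy of totally isotropic subspaces under $\Ort(q)$ (together with Proposition~\ref{pr:CartanDieudonne}) to bound the number of classes, and then count nonisotropic vectors in cosets of $\oE$ by computing the Arf invariant of the induced form on $\oE^\perp/\oE$. Your write-up is slightly more explicit in naming Witt's Extension Theorem and the parity argument for cosets outside $\oE^\perp$, but the substance is identical.
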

\begin{proof}
The quadratic form $q\colon \oR\rightarrow \FF_2$ in \eqref{eq:q} is regular with Arf invariant $1$, and the
nonisotropic vectors are precisely the $36$ vectors $\bar\alpha$ for $\alpha\in\Phi^+$. Therefore, 
because of Proposition~\ref{pr:Phi_cap_E},
the complementary lattice $E$ satisfies that $\oE=E/2R$ is a totally isotropic subspace of $\bar R$, and 
hence it has dimension at most $2$, as the Arf invariant is not trivial. The following possibilities appear:

\begin{description}
\item[$\dim\oE=0$] As for $E_8$ and $E_7$, the universal group is isomorphic to $\ZZ_2^7$ 
and the type is $(72,0,0,0,0,1)$.

\smallskip

\item[$\dim\oE=1$] Here the quadratic four-dimensional space $\oE^\perp/\oE$ is regular and its Arf
invariant is $1$, so it contains
$10$ nonisotropic elements, of the form $\bar\alpha+\oE$, with $\alpha\in\Phi^+$ 
and $\bar\alpha\in\oE^\perp$. Each such left coset contains two nonisotropic vectors.
 The remaining
$16=36-20$ positive roots give rise to one-dimensional homogeneous components and the type is
$(32,20,0,0,0,1)$.

\smallskip

\item[$\dim\oE=2$] In this case the quadratic two-dimensional space $\oE^\perp/\oE$ is regular and
 its Arf invariant is $1$, so it contains
$3$ nonisotropic elements, of the form $\bar\alpha+\oE$, with $\alpha\in\Phi^+$ 
and $\bar\alpha\in\oE^\perp$. Each such left coset contains four nonisotropic vectors.
 The remaining
$24=36-12$ positive roots $\alpha$ satisfy that the left coset $\bar\alpha+\oE$ contains two 
nonisotropic vectors, and hence give rise to two-dimensional homogeneous components. The type is
$(0,24,0,6,0,1)$. \qedhere
\end{description}
\end{proof}

\bigskip

%%%%%%%%%%%%%%%%%%%%%%%%%%%%%

\section{Weyl groups of the special pure gradings 
on simple Lie algebras of types 
\texorpdfstring{$E_6$}{E6}, \texorpdfstring{$E_7$}{E7}, 
or \texorpdfstring{$E_8$}{E8}}\label{se:WeylSpecialPureE}

This section is devoted to computing the Weyl group of each special pure grading on the 
simple Lie algebras of types $E_6$, $E_7$, $E_8$ 
and to describing its action on the corresponding universal group.

Let $\Gamma$ be a special pure grading on a simple Lie algebra $\cL$ of type $E_6$, $E_7$, or $E_8$, 
with universal group
$G$. Let $s\in G$ be an element such that $\cL_s$ contains a Cartan subalgebra $\cH$. 
As the first result of this section, we are going to describe the structure of the group 
$W_s(\Gamma)$ in Theorem~\ref{th:WsGamma}.

Let $\Phi$ be the
root system relative to $\cH$, let $R=\ZZ\Phi$ be the root lattice, and consider the quotient $\oR=R/2R$. 
Let $\pi\colon R\rightarrow \oG$ be the
projection in \eqref{eq:pi} and let $q\colon\oR\rightarrow \FF_2$ be
the quadratic form in \eqref{eq:q}. 
Let $E$ be the complementary lattice
of $\Gamma$ and $\oE=E/2R$. Denote by $\opi\colon\oR\rightarrow \oG$ the projection induced from $\pi$ 
(recall that $G$ is an elementary $2$-group) and
let $H$ be the subgroup of $G$ such that $\oH\bydef H/\langle s\rangle=\opi(\oE^\perp)$. 

The quadratic form $q$ restricts to a quadratic form  $q_{\oE}\colon\oE^\perp\rightarrow \FF_2$. 
As $q$ is regular and $\oE$ is totally isotropic (see Section \ref{se:SpecialpureE}), $\oE$ is the radical of  
$q_{\oE}$. 
Hence $q_{\oE}$ is transferred by means of $\opi$ to a regular quadratic form 
$q_{H,s}\colon\oH\rightarrow \FF_2$  (where we regard $\oH$
as a vector space over $\FF_2$).

There appears a natural flag $\cF$ of subgroups of $G$:
\[
\cF:\quad 1\leq \langle s\rangle \leq H\leq G,
\]
and we may consider the stabilizer of this flag in $\Aut(G)$, denoted by $\Stab_{\Aut(G)}(\cF)$.
Any element $\varphi\in \Stab_{\Aut(G)}(\cF)$ induces an automorphism of $\oH=H/\langle s\rangle$, 
denoted by $\varphi_{H,s}$.

\begin{theorem}\label{th:WsG_flag}
Let $\Gamma$ be a special pure grading on a simple Lie algebra $\cL$ of type $E_6$, $E_7$, or $E_8$,
over an algebraically closed field of characteristic $0$, with 
universal group
$G$. With the notations preceding the theorem, we have
\[
W_s(\Gamma)=\{\varphi\in\Stab_{\Aut(G)}(\cF)\mid \varphi_{H,s}\in\Ort(q_{H,s})\}.
\]
\end{theorem}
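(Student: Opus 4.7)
The plan is to invoke Theorem~\ref{th:WsGamma}, which identifies $W_s(\Gamma)$ with $\tilde\upsilon^{-1}(\im\rho')$ inside the semidirect product~\eqref{eq:GLRE_HomREF2}, and to translate the condition ``upper-left block lies in $\im\rho'$'' into the stabilization of $\cF$ together with the isometry condition on $\oH$. First I would make the identifications explicit: via~\eqref{eq:GRE2}, $G\simeq(R/E)\times\FF_2$ with $s\leftrightarrow(0,1)$, and composing $\pi_E^{-1}$ with $\opi$ identifies $R/E$ with $\oR/\oE$. Under this identification, $\oH=\opi(\oE^\perp)$ corresponds to $\oE^\perp/\oE$, hence $H$ corresponds to $(\oE^\perp/\oE)\times\FF_2$, and $q_{H,s}$ is the regular form on $\oE^\perp/\oE$ induced by $q\vert_{\oE^\perp}$. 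Any $\varphi\in\Stab_{\Aut(G)}(s)$ is then a matrix $\matr{B & 0 \\ \xi & 1}$ as in~\eqref{eq:GLRE_HomREF2}; it lies in $\Stab_{\Aut(G)}(\cF)$ precisely when $B$ preserves $\oE^\perp/\oE$, in which case $\varphi_{H,s}=B\vert_{\oE^\perp/\oE}$.

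Next I would compute $\im\rho'$ directly. By Proposition~\ref{pr:CartanDieudonne}, $\rho(\Aut(\Phi))=\Ort(q)$, so $\im\rho'$ is the image of the natural map $\Stab_{\Ort(q)}(\oE)\to\GL(\oR/\oE)$. Any element of $\Stab_{\Ort(q)}(\oE)$ automatically preserves $\oE^\perp$ (by orthogonality) and descends to an isometry of $q_{H,s}$ on $\oE^\perp/\oE$; combined with Lemma~\ref{le:WsGamma}, which places every $\xi\in\Hom(R/E,\FF_2)$ in $W_s(\Gamma)$, this establishes the ``$\subseteq$'' inclusion of the theorem.

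The ``$\supseteq$'' direction is the crux and amounts to a lifting statement: any $B\in\GL(\oR/\oE)$ that preserves $\oE^\perp/\oE$ and restricts there to an element of $\Ort(q_{H,s})$ must arise from some $\tilde A\in\Stab_{\Ort(q)}(\oE)$. I would prove this via a Witt decomposition $\oR=\oE\oplus W\oplus C$ adapted to $\oE$, where $\oE\oplus C$ is an orthogonal sum of hyperbolic planes pairing $\oE$ with an isotropic complement $C$, and $W=(\oE\oplus C)^\perp$ is isometric to $\oE^\perp/\oE$. Writing $\tilde A$ as an upper block-triangular matrix in this decomposition with diagonal blocks $A_\oE\in\GL(\oE)$, $A_W\in\Ort(W)$, $A_C\in\GL(C)$ and off-diagonal blocks $X_{\oE W},X_{\oE C},X_{WC}$, the identity $q(\tilde A v)=q(v)$ forces $A_\oE=(A_C^\top)^{-1}$ under the hyperbolic pairing, leaves $A_W,A_C,X_{WC},X_{\oE W}$ free, and determines $X_{\oE C}$ from a single linear equation involving $X_{WC}$ and $q\vert_W$. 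This yields surjectivity.

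The main obstacle is precisely this lifting, i.e., the surjectivity of the Levi projection from the parabolic subgroup $\Stab_{\Ort(q)}(\oE)\subset\Ort(q)$ onto $\GL(\oE)\times\Ort(q_{H,s})$ together with full flexibility in the unipotent radical. In characteristic $2$ one has to work with $q$ rather than with its polarization $b_q$, since the latter does not determine the former, but the Witt decomposition recalled in Section~\ref{subse_quadratic} reduces the verification to elementary block computations, after which the theorem follows by combining the description of $\im\rho'$ with Theorem~\ref{th:WsGamma} and the matrix form~\eqref{eq:GLRE_HomREF2}.
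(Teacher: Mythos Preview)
Your proposal is correct and follows essentially the same route as the paper: both reduce via Theorem~\ref{th:WsGamma} and Proposition~\ref{pr:CartanDieudonne} to computing the image of $\Stab_{\Ort(q)}(\oE)\to\GL(\oR/\oE)$, then carry out a block-matrix analysis relative to a Witt decomposition $\oR=\oE\oplus\oW\oplus\oE'$ (your $\oE\oplus W\oplus C$) to show that every $B$ preserving $\oE^\perp/\oE$ and inducing an isometry there lifts to $\Ort(q)$.

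One small inaccuracy: in your block analysis you say $X_{\oE W}$ is free and $X_{\oE C}$ is determined. In fact the isometry condition yields \emph{two} constraints beyond $A_{\oE}=(A_C^\top)^{-1}$: one tying $X_{\oE W}$ to $X_{WC}$ via $b_q$ (so $X_{\oE W}$ is determined by $X_{WC}$ once $A_C,A_W$ are fixed, at least when $b_q$ is nondegenerate), and another relating $X_{\oE C}$ to $q\circ X_{WC}$ (which only constrains, not determines, $X_{\oE C}$). This does not affect the argument, since the projection to $\GL(\oR/\oE)$ sees only $A_W,A_C,X_{WC}$, and for any choice of these the remaining blocks can be filled in.
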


\begin{proof}
Since $\oE$ is totally isotropic, we can choose a totally isotropic subspace $\oE'$ of $\oR$ such that 
$\oE\cap\oE'=0$ and $b_q\vert_{\oE\oplus \oE'}$ is nondegenerate. 
To do this, it suffices to use induction on $\dim \oE$ and the fact that any isotropic vector is contained 
in a hyperbolic plane 
(see, e.g., the proof of \cite[Lemma 8.10]{EKM}).
Letting $\oW$ be the orthogonal complement to $\oE\oplus\oE'$ relative to $b_q$,
we get $\oR=\oE\oplus \oE'\oplus \oW$.

Identify $R/E\simeq \oR/\oE$ with $\oE'\oplus \oW$, and $G$ with $(\oE'\oplus \oW)\times \FF_2$ by
means of \eqref{eq:GRE2}. In this way, $H$ corresponds to $\oW\times \FF_2$.
Then we must prove that $W_s(\Gamma)$, seen as a group of
transformations of $(\oE'\oplus \oW)\times \FF_2$, consists of the linear maps
\begin{equation}\label{eq:WsGamma}
 \left(
 \begin{array}{c|c}
 \begin{matrix}f& 0\\ \alpha& g \end{matrix} &\begin{matrix} 0\\ 0\end{matrix}\\
\hline\\[-8pt]
\xi &1
\end{array}
\right),
\end{equation}
with $f\in\GL(\oE')$, $g\in\Ort(q\vert_{\oW})$,  $\alpha\in\Hom(\oE',\oW)$, and 
$\xi\in\Hom(\oE'\oplus\oW,\FF_2)$.

Because of Proposition~\ref{pr:CartanDieudonne}, the homomorphism 
$\rho'\colon\Stab_{\Aut(\Phi)}(\oE)\rightarrow \GL(R/E)$ in \eqref{eq:rho'} induces a homomorphism 
\[
\begin{split} \bar\rho\colon\Stab_{\Ort(q)}(\oE)&\longrightarrow \GL(R/E)\\
         \mu&\mapsto\  \bar\mu,
\end{split}
\]
where $\bar\mu(r+E)=\mu(r+2R)+\oE\in \oR/\oE\simeq R/E$ for any $r\in R$. Moreover, we have 
$\im\rho'=\im \bar\rho$.

Relative to the decomposition $\oR=\oE\oplus\oE'\oplus \oW$, the group 
$\Stab_{\Ort(q)}(\oE)$ consists of matrices of the form
\[
A=\begin{pmatrix}h & \beta & \gamma\\
  0&f&0\\
  0&\alpha&g \end{pmatrix}
\]
for $h\in \GL(\oE)$, $f\in \GL(\oE')$,  $g\in \Ort(q\vert_{\oW})$, and linear maps (over $\FF_2$) 
$\alpha\colon\oE'\rightarrow \oW$, $\beta\colon\oE'\rightarrow \oE$, $\gamma\colon \oW\rightarrow \oE$.
Then $A$ belongs to $\Ort(q)$ if and only if equations~\eqref{eq:fh}, \eqref{eq:alpha_gamma}, 
\eqref{eq:alpha_beta_f} hold.

Actually, we have:
\begin{itemize}
\item 
For any $u\in \oE$ and $v\in \oE'$,
\begin{equation}\label{eq:fh}
b_q\bigl(h(u),f(v)\bigr)=b_q\bigl(h(u),f(v)+\beta(v)\bigr)=b_q(u,v),
\end{equation}
so that any $f\in\GL(\oE')$ determines $h\in\GL(\oE)$ such that \eqref{eq:fh} holds, and 
conversely. 

\item 
For any $v\in\oE'$ and $w\in\oW$,
\[
\begin{split}
0&=b_q\bigl(\beta(v)+f(v)+\alpha(v),\gamma(w)+g(w)\bigr)\\
  &=b_q\bigl(f(v),\gamma(w)\bigr)+b_q\bigl(\alpha(v),g(w)\bigr),
\end{split}
\]
that is,
\begin{equation}\label{eq:alpha_gamma}
b_q\bigl(f(v),\gamma(w)\bigr)=b_q\bigl(\alpha(v),g(w)\bigr).
\end{equation}
Therefore, once $f$ and $g$ are fixed, any $\alpha$ determines $\gamma$ such that
\eqref{eq:alpha_gamma} holds. (And  if 
$\rad b_q=0$, then any $\gamma$ determines $\alpha$ too.)

\item
For any $v\in\oE'$,
\[
0=q(v)=q\bigl(\beta(v)+f(v)+\alpha(v)\bigr)=q\bigl(\alpha(v)\bigr)+b_q\bigl(f(v),\beta(v)\bigr),
\]
that is,
\begin{equation}\label{eq:alpha_beta_f}
q\bigl(\alpha(v)\bigr)=b_q\bigl(f(v),\beta(v)\bigr).
\end{equation}
For any $\alpha\in\Hom(\oE',\oW)$ and $f\in\GL(\oE')$, there is a linear map 
$\beta\colon\oE'\rightarrow\oE$ satisfying this equation.
\end{itemize}
The image under $\bar\rho$ of the matrix $A$ above is, under our identification 
$R/E\simeq \oE'\oplus \oW$,
the matrix
\[
 \begin{pmatrix}f& 0\\ \alpha&g \end{pmatrix}.
\]
For any $f\in\GL(\oE')$, $g\in\Ort(q\vert_{\oW})$, and $\alpha\in\Hom_{\FF_2}(\oE',\oW)$, there 
exist $h$, $\beta$, $\gamma$ satisfying equations~\eqref{eq:fh}, \eqref{eq:alpha_gamma}, 
\eqref{eq:alpha_beta_f},
and hence the result follows from Theorem~\ref{th:WsGamma}.
\end{proof}

\begin{corollary}[of the proof]
If $\cL$ in Theorem~\ref{th:WsG_flag} has type $E_6$ or $E_8$, then $W_s(\Gamma)$ is isomorphic to 
the affine orthogonal group of the restriction of $q$ to $\oE^{\perp}$.
\end{corollary}

\begin{proof}
Note that $\oE^\perp=\oE\oplus \oW$. Moreover, for $E_6$ and $E_8$, $\rad b_q$ is trivial, so the orthogonal group 
$\Ort(q\vert_{\oE^\perp})$ is isomorphic to the image of $\bar\rho$ by means of the map
\[
\begin{split}
\Theta\colon \Ort(q\vert_{\oE^\perp})&\longrightarrow \im\bar\rho\\
  \begin{pmatrix}h& \gamma\\ 0&g \end{pmatrix}&\mapsto\ \begin{pmatrix}f& 0\\ \alpha &g \end{pmatrix},
\end{split}
\]
for $h\in\GL(\oE)$, $g\in\Ort(q\vert_{\oW})$ and $\gamma\in\Hom(\oW,\oE)$, where $f\in\GL(\oE')$
is determined by \eqref{eq:fh} and $\alpha$ by \eqref{eq:alpha_gamma}.

On the other hand, $\oE^\perp$ is naturally isomorphic to $\Hom(\oR/\oE,\FF_2)$ by means of $b_q$. 
Denote this isomorphism by $\theta$. 

As we have seen, Theorem~\ref{th:WsGamma} shows that $W_s(\Gamma)$ is isomorphic to the semidirect product 
$\Hom(R/E,\FF_2)\rtimes \im\bar\rho\leq \Hom(R/E,\FF_2)\rtimes \GL(R/E)$, 
where the action of $\GL(R/E)$ on $\Hom(R/E,\FF_2)$ is 
the natural one: $f.\xi=\xi\circ f^{-1}$. We claim that our isomorphisms $\Theta$ and $\theta$ 
are compatible with the action, i.e., 
$\theta(Ax)=\Theta(A).\theta(x)$ for all $A\in \Ort(q\vert_{\oE^\perp})$ and $x\in \oE^\perp$.
Using the identification $R/E\simeq \oE'\oplus \oW$, this is equivalent to the equation 
$b_q(A(u+w),\Theta(A)(v'+w'))=b_q(u+w,v'+w')$ for all $u\in\oE$, $v'\in\oE'$, $w,w'\in\oW$, and 
$A=\left(\begin{smallmatrix}h& \gamma\\ 0&g\end{smallmatrix}\right)\in \Ort(q\vert_{\oE^\perp})$, 
which is easy to check:
\begin{equation*}%\label{eq:bqTheta}
\begin{split}
b_q\bigl(A(u+w),\Theta(A)(v'+w')\bigr)
  &=b_q\bigl(h(u)+\gamma(w)+g(w),f(v')+\alpha(v')+g(w')\bigr)\\
  &=b_q\bigl(h(u)+\gamma(w),f(v')\bigr)+b_q\bigl(g(w),\alpha(v')+g(w')\bigr)\\
  &=b_q(u,v')+b_q(w,w')=b_q(u+w,v'+w'),
\end{split}
\end{equation*}
where we have used \eqref{eq:fh} and \eqref{eq:alpha_gamma}. 

We conclude that $W_s(\Gamma)$ is isomorphic to the semidirect product 
$\oE^\perp\rtimes \Ort(q\vert_{\oE^\perp})$, 
which is the affine orthogonal group of the restriction of $q$ to $\oE^\perp$.
\end{proof}

%%%%%%%%%%%%%%%%%%%%

\begin{remark}\label{re:O(q_G)}
In case the complementary lattice $E$ is trivial: $E=2R$, which is
the case in Example~\ref{ex_finas}, one has $\oR=\oW$ in the proof of
Theorem~\ref{th:WsG_flag}, and hence $W_s(\Gamma)$ is the orthogonal group of the
degenerate quadratic form $q_G$ 
defined on $G$ (a $2$-elementary group, and hence a vector space over $\FF_2$), by means of
$q_G(g)\bydef q\bigl(\upsilon(g)\bigr)$, with $\upsilon\colon G\rightarrow \oR$ in \eqref{eq:upsilon}.
\end{remark}

\smallskip

According to Theorems~\ref{th:E8}, \ref{th:E7} and \ref{th:E6}, the special pure gradings on a simple
Lie algebra of type $E_6$, $E_7$, or $E_8$, contain a unique homogeneous component equal to a
 Cartan subalgebra, with
the exceptions of the special pure gradings on $E_8$ with universal groups isomorphic to 
$\ZZ_2^6$ and $\ZZ_2^5$, and the special pure grading on $E_7$ with universal  group 
isomorphic to $\ZZ_2^5$. 

The Weyl group of the special pure grading on $E_8$ with universal 
 group $\ZZ_2^5$ is the group of automorphisms of the universal  group, hence 
isomorphic to  $\GL_5(\FF_2)$ (see \cite[6.2 Theorem]{Hesselink}). This grading is an example
of Jordan grading (see \cite{Alek,Tho}). 

For the remaining cases, $W_s(\Gamma)=W(\Gamma)$ by
the uniqueness of the homogeneous component being equal to a Cartan subalgebra, so 
Theorem~\ref{th:WsG_flag} determines the whole Weyl group.

\smallskip

Therefore, we must deal with the special pure grading on $E_8$ with universal  group isomorphic
to $\ZZ_2^6$, and with the special pure grading on $E_7$ with universal  group isomorphic
to $\ZZ_2^5$. The proofs of Theorems~\ref{th:E8} and \ref{th:E7} show that, in both cases, there
are exactly three homogeneous components that are Cartan subalgebras. The subgroup spanned by
the degrees of these homogeneous components is isomorphic to $\ZZ_2^2$.

\begin{proposition}\label{pr:E_3Cartan}
Let $\cL$ be a simple Lie algebra of type $E_7$ or $E_8$
over an algebraically closed field of characteristic $0$. Let $\Gamma:\cL=\bigoplus_{g\in G}\cL_g$ be
the unique, up to equivalence, special pure grading with universal group $G$ and with exactly three
 homogeneous components: $\cL_s$, $\cL_{s'}$ and $\cL_{s''}$, being 
Cartan subalgebras. Let $W(\Gamma)$ be its Weyl group, and let $W'(\Gamma)$ be the pointwise 
stabilizer in $W(\Gamma)$ of $\{s,s',s''\}$:
\[
W'(\Gamma)=\{w\in W(\Gamma)\mid w(s)=s,\, w(s')=s',\, w(s'')=s''\}.
\] 
Then there is a short exact
sequence
\[
1\rightarrow W'(\Gamma)\hookrightarrow W(\Gamma)\rightarrow S_3\rightarrow 1,
\]
where $S_3$ denotes the symmetric group of degree $3$, and the homomorphism 
$W(\Gamma)\rightarrow S_3$ is  induced from the action of $W(\Gamma)$
on the elements $s,s',s''\in G$.
\end{proposition}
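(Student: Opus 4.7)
The homomorphism $W(\Gamma)\to S_3$ is well-defined because, by Proposition~\ref{pr:Phi_cap_E} and Equation~\eqref{eq:homogeneousGamma}, the set $\{s,s',s''\}\subset G$ is intrinsically characterized as those indices $g$ for which $\cL_g$ is a Cartan subalgebra of $\cL$. Any $\varphi\in\Aut(\Gamma)$ permutes homogeneous components while preserving the Cartan property, and hence permutes $\{s,s',s''\}$; the kernel of the induced map is $W'(\Gamma)$ by definition. Only the surjectivity requires proof.

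My plan is to exhibit two distinct transpositions in the image of $W(\Gamma)\to S_3$, which will necessarily generate $S_3$. The key ingredient is an invariant reading of Lemma~\ref{le:WsGamma}. Under the identification \eqref{eq:GRE2}, the matrices $\left(\begin{smallmatrix} I_l & 0 \\ \xi & 1 \end{smallmatrix}\right)$ with $\xi\in\Hom(R/E,\FF_2)$ correspond precisely to the transvections $h\mapsto h+\eta(h)\,s$ where $\eta\in\Hom(G,\FF_2)$ is obtained by extending $\xi$ trivially in the $\FF_2$-factor (so $\eta(s)=0$ automatically); conversely, every $\eta\in\Hom(G,\FF_2)$ with $\eta(s)=0$ arises in this way. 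Thus Lemma~\ref{le:WsGamma} asserts that every such transvection lies in $W_s(\Gamma)$. Since its proof uses only the hypothesis that $\cL_s$ equals a Cartan subalgebra---a condition that holds symmetrically for $s'$ and $s''$---running the analogous construction with $\cL_{s'}$ (respectively $\cL_{s''}$) in place of $\cL_s$ shows that every transvection $h\mapsto h+\eta(h)\,s'$ with $\eta(s')=0$ belongs to $W_{s'}(\Gamma)\le W(\Gamma)$, and similarly for $s''$.

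Using this, I construct the two transpositions. For the first, pick $\eta\in\Hom(G,\FF_2)$ with $\eta(s)=0$ and $\eta(s')=1$; since $s''=s+s'$ we get $\eta(s'')=1$, so the transvection $h\mapsto h+\eta(h)\,s$ in $W_s(\Gamma)$ fixes $s$, sends $s'\mapsto s'+s=s''$, and sends $s''\mapsto s''+s=s'$, realizing the transposition $(s'\ s'')$. For the second, pick $\eta'\in\Hom(G,\FF_2)$ with $\eta'(s')=0$ and $\eta'(s)=1$; then $\eta'(s'')=1$, and the transvection $h\mapsto h+\eta'(h)\,s'$ in $W_{s'}(\Gamma)$ fixes $s'$, sends $s\mapsto s+s'=s''$ and $s''\mapsto s''+s'=s$, realizing $(s\ s'')$. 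The two transpositions $(s'\ s'')$ and $(s\ s'')$ generate $S_3$, and surjectivity is proved.

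The main subtlety---and where I expect the most care will be needed---is justifying the symmetric reapplication of Lemma~\ref{le:WsGamma} to each of the three Cartan indices. The original proof is cast in terms of the specific identification \eqref{eq:GRE2} built from the Cartan $\cH=\cL_s$; one must verify that, when the entire setup is run with $\cH=\cL_{s'}$ (with a possibly different root system, complementary lattice, and splitting of $G$) and the result is translated back to the original $G$, the elements produced in $W_{s'}(\Gamma)$ are precisely the transvections of the claimed invariant form. This amounts to routine bookkeeping once one notes that the invariant statement depends only on the symmetric datum ``$\cL_g$ is a Cartan subalgebra of $\cL$'', but it is the only nontrivial step in the argument.
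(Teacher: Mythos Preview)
Your proof is correct and follows essentially the same approach as the paper's: produce one transposition fixing $s$ using the elements from Lemma~\ref{le:WsGamma}, then invoke the symmetry of the three Cartan components to obtain a second transposition by re-running the construction with $\cH=\cL_{s'}$. Your invariant phrasing of Lemma~\ref{le:WsGamma} in terms of transvections $h\mapsto h+\eta(h)\,s$ is a clean way to make the symmetric re-application transparent, but the underlying argument and the ``main subtlety'' you flag are exactly what the paper does (and leaves implicit) as well.
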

\begin{proof}
In the two cases we are dealing with, we can identify $G$ with $R/E\times \FF_2$ as in \eqref{eq:GRE2} 
with $\oE=E/2R$ a
totally isotropic subspace of $\oR=R/2R$. Fix a decomposition $\oR=\oE\oplus \oE'\oplus \oW$ as in
the proof of Theorem~\ref{th:WsG_flag} and identify $R/E$ with $\oE'\oplus\oW$. 
For $E_8$, $\oW$ is a two-dimensional space and the Arf invariant
of $q\vert_{\oW}$ is trivial, so there is a unique vector $a\in\oW$ with $q(a)=1$. For $E_7$, 
$\oW=\rad b_q=\FF_2 a$, with $q(a)=1$. The three homogeneous components that are Cartan
subalgebras are  $\cL_{(0,1)}$, $\cL_{(a,0)}$, and $\cL_{(a,1)}$ (see \eqref{eq:homogeneousGamma}).
Write $s=(0,1)$, $s'=(a,0)$ and $s''=s+s'=(a,1)$. Equation~\eqref{eq:WsGamma} shows that
$W_s(\Gamma)$, as a group of transformations of $(\oE'\oplus \oW)\times\FF_2$, consists of
the linear maps
\begin{equation}\label{eq:WsGamma2}
 \left(
 \begin{array}{c|c}
 \begin{matrix}f& 0\\ \alpha&g \end{matrix} &\begin{matrix} 0\\ 0\end{matrix}\\
\hline\\[-8pt]
\begin{matrix} \epsilon& \omega\end{matrix} &1
\end{array}
\right),
\end{equation}
with $g\in\Ort(q\vert_{\oW})$, $f\in\GL(\oE')$, $\alpha\in\Hom(\oE',\oW)$, 
$\epsilon\in\Hom(\oE',\FF_2)$, and $\omega\in\Hom(\oW,\FF_2)$. Note that $\Ort(q\vert_{\oW})$ is 
cyclic of order $2$ and fixes $a$ for $E_8$, while it is trivial for $E_7$. Therefore, $W'(\Gamma)$
consists of the linear maps in \eqref{eq:WsGamma2} with $\omega(a)=0$.

The linear maps of the form
\[
 \left(
 \begin{array}{c|c}
 \begin{matrix}\id& 0\\ 0&\id \end{matrix} &\begin{matrix} 0\\ 0\end{matrix}\\
\hline\\[-8pt]
\begin{matrix} 0&\omega\end{matrix} &1
\end{array}
\right),
\]
 with $\omega(a)=1$ have order $2$ and permute the labels $s'$ and $s''$, while fixing $s$.
 
However, the three Cartan subalgebras $\cL_s$, $\cL_{s'}$ and $\cL_{s''}$ play the same role, so we
could have started with the Cartan subalgebra $\cH=\cL_{s'}$ to conclude that there are elements
in $W(\Gamma)$ permuting $\cL_s$ and $\cL_{s''}$. We conclude that the natural homomorphism
$W(\Gamma)\rightarrow S_3$ is surjective, and its kernel is, by definition $W'(\Gamma)$.
\end{proof}

\begin{corollary}\label{co:E_3Cartan}
Let $\cL$ be a simple Lie algebra of type $E_7$ or $E_8$
over an algebraically closed field of characteristic $0$. Let $\Gamma:\cL=\bigoplus_{g\in G}\cL_g$ be
the unique, up to equivalence, special pure grading with universal group $G$ and with exactly 
three homogeneous components being 
Cartan subalgebras. Then the order of the Weyl group is given in the following table:
\begin{center}
\begin{tabular}{cc}
\bigstrut \quad Type of $\cL$\quad\null & $\left|W(\Gamma)\right|$\\
\hline\hline
\bigstrut$E_7$&$2^{10}\times 3^2\times 7$\\
\hline
\bigstrut$E_8$&$2^{15}\times 3^2\times 7$\\
\hline
\end{tabular}
\end{center}
\end{corollary}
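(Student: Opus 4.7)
The plan is to derive the order of $W(\Gamma)$ by combining the short exact sequence from Proposition~\ref{pr:E_3Cartan} with a direct count of the elements of $W'(\Gamma)$ using the explicit matrix description given in the proof of that proposition. Since $|W(\Gamma)| = 6\cdot |W'(\Gamma)|$, it suffices to compute $|W'(\Gamma)|$ in each case.

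First, I would record the dimensions of the relevant subspaces. From the proofs of Theorems~\ref{th:E8} and \ref{th:E7}, for type $E_8$ with $G\cong\ZZ_2^6$ we have $\dim\oE=\dim\oE'=3$ and $\dim\oW=2$, where $q\vert_{\oW}$ is a regular hyperbolic form of trivial Arf invariant; for type $E_7$ with $G\cong\ZZ_2^5$ we have $\dim\oE=\dim\oE'=3$ and $\oW=\rad b_q=\FF_2 a$, so $\dim\oW=1$ and $\Ort(q\vert_{\oW})$ is trivial.

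Next, I would count $|W'(\Gamma)|$ using the parametrization in \eqref{eq:WsGamma2} with the additional constraint $\omega(a)=0$. The five independent parameters are $f\in\GL(\oE')$, $g\in\Ort(q\vert_{\oW})$, $\alpha\in\Hom(\oE',\oW)$, $\epsilon\in\Hom(\oE',\FF_2)$, and $\omega\in\Hom(\oW,\FF_2)$ with $\omega(a)=0$. Their orders are:
\begin{itemize}
\item $|\GL(\oE')|=|\GL_3(\FF_2)|=(2^3-1)(2^3-2)(2^3-4)=168=2^3\cdot 3\cdot 7$;
\item $|\Hom(\oE',\FF_2)|=2^{3}=8$ and $|\Hom(\oE',\oW)|=2^{3\dim\oW}$;
\item $|\Ort(q\vert_{\oW})|=2$ for $E_8$ (the hyperbolic reflection together with the identity) and $1$ for $E_7$;
\item the number of $\omega\in\Hom(\oW,\FF_2)$ with $\omega(a)=0$ is $2^{\dim\oW-1}$, i.e.\ $2$ for $E_8$ and $1$ for $E_7$.
\end{itemize}
Multiplying these, for $E_8$ I obtain $|W'(\Gamma)|=168\cdot 2\cdot 2^{6}\cdot 2^{3}\cdot 2=2^{14}\cdot 3\cdot 7$, and for $E_7$ I obtain $|W'(\Gamma)|=168\cdot 1\cdot 2^{3}\cdot 2^{3}\cdot 1=2^{9}\cdot 3\cdot 7$.

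Finally, multiplying by $|S_3|=6=2\cdot 3$ gives $|W(\Gamma)|=2^{15}\cdot 3^{2}\cdot 7$ in type $E_8$ and $|W(\Gamma)|=2^{10}\cdot 3^{2}\cdot 7$ in type $E_7$, as claimed. There is no real obstacle here: everything reduces to plugging the correct dimensions of $\oE'$ and $\oW$ into the parametrization from the proof of Proposition~\ref{pr:E_3Cartan}; the only point requiring a moment of care is the structure of $\Ort(q\vert_{\oW})$ in the two cases (cyclic of order two for $E_8$, trivial for $E_7$) and the effect of the single linear constraint $\omega(a)=0$ on the number of admissible $\omega$.
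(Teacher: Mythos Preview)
Your proposal is correct and takes essentially the same approach as the paper: both use the matrix parametrization from Proposition~\ref{pr:E_3Cartan} and count the independent parameters. The only cosmetic difference is that the paper counts $|W_s(\Gamma)|$ directly (allowing all $\omega\in\Hom(\oW,\FF_2)$) and then multiplies by $[W(\Gamma):W_s(\Gamma)]=3$, whereas you impose the extra constraint $\omega(a)=0$ to count $|W'(\Gamma)|$ and multiply by $|S_3|=6$; the arithmetic is identical.
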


\begin{proof}
By Proposition~\ref{pr:E_3Cartan}, the order of $W(\Gamma)$ is $3\times\left| W_s(\Gamma)\right|$.
But \eqref{eq:WsGamma} gives
\[
\begin{split}
\left| W_s(\Gamma)\right|&=\left|\Ort(q\vert_{\oW})\right|\times \left|\GL(\oE')\right|\times
\left|\Hom(\oE',\oW)\right|\times \left|\Hom(\oE'\oplus\oW,\FF_2)\right|\\
 &=\begin{cases}
 1\times(7\times 6\times 4)\times 2^3\times 2^4=2^{10}\times 3\times 7, &\text{for $E_7$,}\\
 2\times(7\times 6\times 4)\times 2^6\times 2^5=2^{15}\times 3\times 7, &\text{for $E_8$,}
 \end{cases}
\end{split}
\]
 as required.
\end{proof}

Our next goal is to show that the short exact sequence in 
Proposition~\ref{pr:E_3Cartan} splits, and to 
give a simpler description of the Weyl groups. To do this, let us consider nice realizations of these
gradings following ideas from \cite{CunhaElduque} and the references therein.

For $E_8$, let $H$ be the extended Hamming $[8,4,4]$ binary code, which is the
four dimensional subspace of $\FF_2^8$ that consists of the following words:
\[
\begin{split}
&\bcero=(0,0,0,0,0,0,0,0),\ \buno=(1,1,1,1,1,1,1,1), \\
&(1,1,1,1,0,0,0,0),\ (0,0,0,0,1,1,1,1),\\
&(1,1,0,0,1,1,0,0),\ (1,1,0,0,0,0,1,1),\ (0,0,1,1,1,1,0,0),\ (0,0,1,1,0,0,1,1),\\
&(1,0,1,0,1,0,1,0),\ (1,0,1,0,0,1,0,1),\ (0,1,0,1,1,0,1,0),\ (0,1,0,1,0,1,0,1),\\
&(1,0,0,1,1,0,0,1),\ (1,0,0,1,0,1,1,0),\ (0,1,1,0,1,0,0,1),\ (0,1,1,0,0,1,1,0).
\end{split}
\]
There is an $H$-grading $\underline\Gamma$ of the simple Lie algebra $\cL$ of
type $E_8$ with 
\[
\cL_{\bcero}=\frsl(V_1)\oplus\cdots\oplus \frsl(V_8),\qquad\cL_{\buno}=0,
\]
where $V_i$ is a two-dimensional vector space for any $i=1,\ldots,8$, and for any word 
$\bc\neq \bcero,\buno$, we have
\[
\cL_{\bc}=V_{c_1}\otimes V_{c_2}\otimes V_{c_3}\otimes V_{c_4}
\]
as a module for $\cL_{\bcero}$, with $1\leq c_1<c_2<c_3< c_4\leq 8$ being the slots of $\bc$
that contain $1$. For example, we have
\[
\cL_{(1,0,0,1,1,0,0,1)}=V_1\otimes V_4\otimes V_5\otimes V_8.
\]
The Lie bracket in $\cL$ is shown in \cite[Section 3.3]{EldMagic}.

There is a group homomorphism
\begin{equation}\label{eq:Psi}
\Psi\colon\SL(V_1)\times \cdots\times \SL(V_8)\longrightarrow \Aut(\cL)
\end{equation}
such that $\Psi\bigl((f_1,\ldots,f_8)\bigr)$ acts on $\frsl(V_i)\leq\cL_{\bcero}$ by $g\mapsto f_igf_i^{-1}$,
and on the component $V_{\bc}=V_{c_1}\otimes V_{c_2}\otimes V_{c_3}\otimes V_{c_4}$ as 
$f_{c_1}\otimes f_{c_2}\otimes f_{c_3}\otimes f_{c_4}$.

Fix bases on the $V_i$'s and consider the following automorphisms
\begin{equation}\label{eq:sigma_tau}
\begin{split}
\sigma&=\Psi\left(\Bigl(\begin{pmatrix} 0&1\\ -1&0\end{pmatrix},\cdots,
         \begin{pmatrix} 0&1\\ -1&0\end{pmatrix}\Bigr)\right),\\
 \tau&=\Psi\left(\Bigl(\begin{pmatrix} \bi&0\\ 0&-\bi\end{pmatrix},\cdots,
         \begin{pmatrix} \bi&0\\ 0&-\bi\end{pmatrix}\Bigr)\right),
\end{split}
\end{equation}
with $\bi^2=-1$.
From the conditions
\[
\begin{pmatrix} 0&1\\ -1&0\end{pmatrix}^2=-\id=\begin{pmatrix} \bi&0\\ 0&-\bi\end{pmatrix}^2,
\qquad
\begin{pmatrix} 0&1\\ -1&0\end{pmatrix}\begin{pmatrix} \bi&0\\ 0&-\bi\end{pmatrix}  
=- \begin{pmatrix} \bi&0\\ 0&-\bi\end{pmatrix} \begin{pmatrix} 0&1\\ -1&0\end{pmatrix},
\]
we get 
\[
\sigma^2=\tau^2=\id,\qquad \sigma\tau=\tau\sigma,
\]
because we have $\Psi\bigl((-\id,\ldots,-\id)\bigr)=\id$.

Consider the grading by $H\times\FF_2^2\simeq \ZZ_2^6$:
\begin{equation}\label{eq:E8_HF22}
\Gamma:\cL=\bigoplus_{(\bc,i,j)\in H\times\FF_2^2}\cL_{(\bc,i,j)}
\end{equation}
obtained by refining the previous $H$-grading $\underline\Gamma$ by means of $\sigma$ and $\tau$:
\[
\begin{split}
\cL_{(\bc,0,0)}&=\{x\in\cL_{\bc}\mid \sigma(x)=x=\tau(x)\},\\
\cL_{(\bc,1,0)}&=\{x\in\cL_{\bc}\mid \sigma(x)=-x=-\tau(x)\},\\
\cL_{(\bc,0,1)}&=\{x\in\cL_{\bc}\mid \sigma(x)=x=-\tau(x)\},\\
\cL_{(\bc,1,1)}&=\{x\in\cL_{\bc}\mid \sigma(x)=-x=\tau(x)\}.
\end{split}
\]

\begin{lemma}\label{le:HF22}
Up to equivalence, $\Gamma$ is the special pure grading on $E_8$ with universal  group
 $\ZZ_2^6$.
\end{lemma}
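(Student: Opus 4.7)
The plan is to show that $\Gamma$ is a special pure grading on $\cL$ whose type matches that of $\Gamma_{E_8}^6$ in Theorem~\ref{th:E8}; the uniqueness up to equivalence provided by that theorem will then yield the claim, and force the grading group $H\times\FF_2^2$ to coincide with the universal group $\ZZ_2^6$.

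First I would check that $\Gamma$ is a well-defined $(H\times\FF_2^2)$-grading. The automorphisms $\sigma$ and $\tau$ are images under the homomorphism $\Psi$ in \eqref{eq:Psi} of tuples of central elements in each $\SL(V_i)$, hence they preserve every homogeneous component $\cL_{\bc}$ of the $H$-grading $\underline\Gamma$. Together with the relations $\sigma^2=\tau^2=\id$ and $\sigma\tau=\tau\sigma$, this ensures that taking joint eigenspaces under $\langle\sigma,\tau\rangle$ refines $\underline\Gamma$ into a grading by $H\times\FF_2^2$ compatible with the bracket. To verify purity and speciality I would restrict attention to $\cL_{\bcero}$: each factor $\frsl(V_i)\subseteq\cL_{\bcero}$ is acted on by $\sigma$ and $\tau$ via conjugation by $\begin{pmatrix}0&1\\-1&0\end{pmatrix}$ and $\begin{pmatrix}\bi&0\\0&-\bi\end{pmatrix}$, which reproduces exactly the $\ZZ_2^2$-grading $\Gamma_1$ of Example~\ref{ex_sl1}. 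In particular its neutral component vanishes, so $\cL_{(\bcero,0,0)}=0$, showing that $\Gamma$ is special; and the diagonal Cartan subalgebra $\frh_i\subseteq\frsl(V_i)$ sits in the $\sigma=-\id$, $\tau=\id$ eigenspace, so $\cH\bydef\bigoplus_{i=1}^8\frh_i$ is a Cartan subalgebra of $\cL$ lying in $\cL_{(\bcero,1,0)}$, proving that $\Gamma$ is pure with $s=(\bcero,1,0)$.

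To compute the type, I would inspect each $H$-component separately. The codeword $\bcero$ contributes three $8$-dimensional Cartan subalgebras $\cL_{(\bcero,1,0)}$, $\cL_{(\bcero,0,1)}$, $\cL_{(\bcero,1,1)}$, and $\bc=\buno$ contributes nothing since $\cL_{\buno}=0$. For each of the remaining $14$ codewords $\bc$, the $16$-dimensional module $\cL_{\bc}\cong V_{c_1}\otimes V_{c_2}\otimes V_{c_3}\otimes V_{c_4}$ splits under $\langle\sigma,\tau\rangle$ into four joint eigenspaces: on each $V_i$ the pair $\sigma_i,\tau_i$ anti-commutes with $\sigma_i^2=\tau_i^2=-\id$, so their $4$-fold tensor products satisfy $\sigma^2=\tau^2=\id$ and $\sigma\tau=\tau\sigma$, and a direct count (choose a $\tau_i$-eigenbasis of $V_i$, read off the $\tau$-eigenvalue from the parity of labels, then use that $\sigma$ swaps the two labels to split each $\tau$-eigenspace evenly) shows that each joint eigenspace has dimension $4$. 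This gives $14\cdot 4=56$ components of dimension $4$, so the type of $\Gamma$ is $(0,0,0,56,0,0,0,3)$, matching $\Gamma_{E_8}^6$ in Theorem~\ref{th:E8}; the equivalence follows.

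The only non-formal step is the tensor-product dimension count sketched above, and that is really the main obstacle; everything else is bookkeeping plus appealing to the uniqueness of the type in Theorem~\ref{th:E8}.
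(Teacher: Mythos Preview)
Your approach is essentially the same as the paper's: verify that $\Gamma$ is special and pure by decomposing $\cL_{\bcero}=\bigoplus_i\frsl(V_i)$ into three Cartan subalgebras via the $\ZZ_2^2$-action, then check that each $16$-dimensional component $\cL_{\bc}$ with $\bc\neq\bcero,\buno$ splits into four $4$-dimensional pieces, and invoke the uniqueness in Theorem~\ref{th:E8}. The paper states the splitting of $\cL_{\bc}$ without the explicit tensor argument you sketch, but your count is correct.

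One slip to fix: you write that $\sigma$ and $\tau$ are images of ``tuples of central elements in each $\SL(V_i)$''. They are not---the matrices $\left(\begin{smallmatrix}0&1\\-1&0\end{smallmatrix}\right)$ and $\left(\begin{smallmatrix}\bi&0\\0&-\bi\end{smallmatrix}\right)$ are far from central in $\SL_2$. The correct (and simpler) reason $\sigma,\tau$ preserve every $\cL_{\bc}$ is that \emph{every} element in the image of $\Psi$ does so by construction: $\Psi$ is defined so that its image acts on $\cL_{\bcero}$ by conjugation and on each $\cL_{\bc}$ by the tensor action, hence $\im\Psi\subseteq\Stab(\underline\Gamma)$. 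With this correction your argument goes through.
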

\begin{proof}
It is enough to note that $\frsl_2(V_i)$ is the direct sum of the three Cartan subalgebras spanned
by the linear maps with coordinate matrices in our fixed basis 
$\left(\begin{smallmatrix}0&1\\ -1&0\end{smallmatrix}\right)$,
$\left(\begin{smallmatrix}\bi&0\\ 0&-\bi\end{smallmatrix}\right)$, and
$\left(\begin{smallmatrix}0&\bi\\ \bi&0\end{smallmatrix}\right)$. It follows that $\cL_{(\bcero,0,0)}$ 
is trivial, so that $\Gamma$ is special, and $\cL_{(\bcero,1,0)}$, $\cL_{(\bcero,0,1)}$, and
$\cL_{(\bcero,1,1)}$ are Cartan subalgebras whose sum is 
$\cL_{\bcero}=\frsl(V_1)\oplus\cdots\oplus\frsl(V_8)$. In particular, $\Gamma$ is pure.

Moreover, for $\bc\neq \bcero,\buno$ in $H$, $\cL_{\bc}$ splits into the direct sum of four homogeneous
components of $\Gamma$, each of dimension $4$. Hence, $\Gamma$ has exactly three
homogeneous components that are Cartan subalgebras.
\end{proof}

Now, we are in a position to prove that the short exact sequence in Proposition~\ref{pr:E_3Cartan} splits
for type $E_8$,
and to provide a nice description of the Weyl group.

\begin{theorem}\label{th:E8_3Cartan}
Let $\cL$ be a simple Lie algebra of type $E_8$
over an algebraically closed field of characteristic $0$. Let $\Gamma:\cL=\bigoplus_{g\in G}\cL_g$ be
the unique, up to equivalence, special pure grading with universal group $G$ and with exactly three
 homogeneous components: $\cL_s$, $\cL_{s'}$ and $\cL_{s''}$, being 
Cartan subalgebras. Let $W(\Gamma)$ be its Weyl group, and let $W'(\Gamma)$ be the subgroup
$W'(\Gamma)=\{w\in W(\Gamma)\mid w(s)=s,\, w(s')=s',\, w(s'')=s''\}$. Then the short exact
sequence
\[
1\rightarrow W'(\Gamma)\hookrightarrow W(\Gamma)\rightarrow S_3\rightarrow 1,
\]
in Proposition~\ref{pr:E_3Cartan} splits. 

Moreover, consider the flag
\[
\cF:\quad 1\leq S\leq K\leq G,
\]
where $S=\langle s,s',s''\rangle=\langle g\in G\mid \cL_g\ \text{is a Cartan subalgebra}\rangle$, and 
$K=\langle g\in G\mid \cL_g=0\rangle$. Then the Weyl group is the stabilizer of $\cF$:
\[
W(\Gamma)=\Stab_{\Aut(G)}(\cF).
\]
\end{theorem}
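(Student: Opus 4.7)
The plan is to use the explicit Hamming-code realization of $\Gamma$ from Lemma~\ref{le:HF22} to build the splitting, and then to identify $W(\Gamma)$ with $\Stab_{\Aut(G)}(\mathcal F)$ by an order comparison. Both assertions of the theorem follow from these two ingredients.

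For the splitting, the three Cartan homogeneous components $\cL_s,\cL_{s'},\cL_{s''}$ are precisely the three nontrivial common eigenspaces of the commuting involutions $\sigma,\tau$ from \eqref{eq:sigma_tau} acting on $\cL_{\bcero}=\bigoplus_{i=1}^{8}\frsl(V_i)$; inside each factor $\frsl(V_i)$ they correspond to the three imaginary matrices $e_1=\bigl(\begin{smallmatrix}\bi&0\\0&-\bi\end{smallmatrix}\bigr)$, $e_2=\bigl(\begin{smallmatrix}0&1\\-1&0\end{smallmatrix}\bigr)$, $e_3=\bigl(\begin{smallmatrix}0&\bi\\\bi&0\end{smallmatrix}\bigr)$, which satisfy the quaternion relations. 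A small quaternion computation produces elements $\rho,\delta\in\SL_2(\FF)$ whose conjugation action on $\{e_1,e_2,e_3\}$ is a $3$-cycle and a transposition, respectively. Their diagonal images $\Psi\bigl((\rho,\ldots,\rho)\bigr)$ and $\Psi\bigl((\delta,\ldots,\delta)\bigr)$ under the map in \eqref{eq:Psi} then conjugate the ordered pair $(\sigma,\tau)$ to another ordered pair in $\langle\sigma,\tau\rangle\setminus\{\id\}$, so they preserve the coarser $H$-grading $\underline\Gamma$ and permute the joint $(\sigma,\tau)$-eigenspaces. Hence they lie in $\Aut(\Gamma)$, and their classes in $W(\Gamma)$ act on $\{s,s',s''\}$ as a $3$-cycle and a transposition. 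Using $\rho^3=-I=\delta^2$ in $\SL_2(\FF)$ together with $\Psi\bigl((-I,\ldots,-I)\bigr)=\id$, these two classes satisfy the defining relations of $S_3$, producing the desired splitting of the sequence in Proposition~\ref{pr:E_3Cartan}.

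For the equality $W(\Gamma)=\Stab_{\Aut(G)}(\mathcal F)$, the inclusion $\subseteq$ is automatic: any element of $W(\Gamma)$ permutes the Cartan homogeneous components (hence preserves $S$) and permutes the zero components (hence preserves the subgroup $K$ they generate). For the reverse inclusion I would compare orders. Corollary~\ref{co:E_3Cartan} gives $|W(\Gamma)|=2^{15}\cdot 3^2\cdot 7$. On the other hand, identifying $G\cong\FF_2^6$ and recording that $\dim S=2$ and $\dim K=3$, the Gaussian binomial count gives $\binom{6}{2}_2\binom{4}{1}_2=651\cdot 15=9765$ flags of this type, so $|\Stab_{\Aut(G)}(\mathcal F)|=|\GL_6(\FF_2)|/9765$, and a short arithmetic check shows this also equals $2^{15}\cdot 3^2\cdot 7$. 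Equality of orders then upgrades the inclusion to an equality.

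The main obstacle I anticipate is organisational: keeping track of the interplay between the $\SL_2$-level conjugation on the $e_i$'s and the induced permutation of the three Cartan components at the level of $W(\Gamma)$, and ensuring that the classes $[\Psi(\rho)]$ and $[\Psi(\delta)]$ truly generate an $S_3$ inside $W(\Gamma)$ rather than accidentally collapsing or meeting $W'(\Gamma)$ nontrivially. Once the correspondence is pinned down, this verification is a short computation, and the order comparison for the flag stabilizer is purely combinatorial.
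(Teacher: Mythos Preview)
Your approach is correct, and the order comparison via Gaussian binomials for $|\Stab_{\Aut(G)}(\cF)|$ is a perfectly valid alternative to the paper's direct block count. Both arguments rest on the same two pillars: the Hamming-code realization of Lemma~\ref{le:HF22} and the order of $W(\Gamma)$ from Corollary~\ref{co:E_3Cartan}.

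The main difference is the order of operations. The paper first establishes $W(\Gamma)=\Stab_{\Aut(G)}(\cF)$ (by the inclusion plus order count) and then reads off the splitting for free: in the block description $\left(\begin{smallmatrix} f & 0 \\ \mu & g\end{smallmatrix}\right)$ with $f\in\Stab_{\GL(H)}(\buno)$, $g\in\GL_2(2)$, $\mu\in\Hom(H,\FF_2^2)$, the subgroup $\{f=\id,\ \mu=0\}\cong\GL_2(2)\cong S_3$ is an obvious complement of $W'(\Gamma)$. You instead construct the splitting first via explicit $\SL_2$-elements $\rho,\delta$, and then do the flag equality independently. This works, but note that your justification ``$\rho^3=-I=\delta^2$'' only gives the orders of the generators; the dihedral relation $\delta\rho\delta^{-1}=\rho^{\pm 1}$ is still needed. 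The cleanest way to close this is to observe that your diagonal automorphisms $\Psi((\rho,\ldots,\rho))$ and $\Psi((\delta,\ldots,\delta))$ lie in $\Stab(\underline\Gamma)$ and conjugate the pair $(\sigma,\tau)$ \emph{uniformly}, so their images in $W(\Gamma)\hookrightarrow\Aut(H\times\FF_2^2)$ have $\mu=0$ and $f=\id$; hence they already sit inside the $\GL_2(2)\cong S_3$ block, and the relation is automatic there. In other words, your explicit construction in fact produces exactly the same $S_3$ complement that the paper obtains abstractly---so the two routes converge, with the paper's ordering being slightly more economical.
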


\begin{proof}
We may identify $G$ with $H\times\FF_2^2$ as in
\eqref{eq:E8_HF22}. Then the subgroups $S$ and $K$ equal $\FF_2^2$ and 
$\langle\buno\rangle\times \FF_2^2$ respectively, because the only elements $g$ in $G$ with 
$\cL_g=0$ are $(\buno,i,j)$, for $i,j\in\FF_2^2$.

The elements of the Weyl group clearly preserve the subgroups $S$ and $K$. Therefore, 
the Weyl group, considered as a
subgroup of $\Aut(H\times\FF_2^2)$, consists of transformations that stabilize the flag $\cF$ or, in other
words, transformations of the form:
\begin{equation}\label{eq:Weyl_E8_3Cartan}
\left(
 \begin{array}{c|c}
f &0\\
\hline\\[-10pt]
\mu &g
\end{array}
\right), 
\end{equation}
with $f\in\Stab_{\GL(H)}(\buno)$, $g\in\GL_2(2)$, and $\mu\in \Hom(H,\FF_2^2)$.

But an easy computation gives:
\[
\begin{split}
\left|\Stab_{\GL(H)}(\buno)\right|&=2^3\times\left| \GL_3(2)\right|=2^3\times 7\times 6\times 4
             =2^6\times 3\times 7,\\
\left|\Hom(H,\FF_2^2) \right|&=2^8,\\
\left|\GL_2(2) \right|&=3\times 2,
\end{split}
\]
so the order of the subgroup in \eqref{eq:Weyl_E8_3Cartan} is 
$2^{15}\times 3^2\times 7$, which coincides with
the order of $W(\Gamma)$ by Corollary~\ref{co:E_3Cartan}. Therefore, $W(\Gamma)$ is the group
of all transformations in \eqref{eq:Weyl_E8_3Cartan}.

Moreover, $W'(\Gamma)$ is the subgroup of $W(\Gamma)$ consisting of the transformations that fix
$(\bcero,i,j)$ for any $i,j\in\FF_2^2$, and hence of the transformations in \eqref{eq:Weyl_E8_3Cartan}
with $g=\id$, which is complemented by the subgroup of transformations in \eqref{eq:Weyl_E8_3Cartan}
with $f=\id$, $\mu=0$. This shows that the short exact sequence in Proposition~\ref{pr:E_3Cartan}
splits.
\end{proof}

For $E_7$ the situation is simpler. Let $\cL$ be the simple Lie algebra of type $E_7$, and now let
$C$ be the simplex $[7,4,3]$ binary linear code, which is the three dimensional subspace
of  $\FF_2^7$ consisting of the words: 
\[
\begin{split}
&\bcero=(0,0,0,0,0,0,0),\ (1,1,0,0,1,1,0), \\
&(0,1,1,0,0,1,1),\ (1,0,1,0,1,0,1),\\
&(1,1,1,1,0,0,0),\ (0,0,1,1,1,1,0),\\
&(1,0,0,1,0,1,1),\ (0,1,0,1,1,0,1).
\end{split}
\]
There is a grading $\underline\Gamma$ by  $C\simeq\ZZ_2^3$ with
\[
\cL_{\bcero}=\frsl(V_1)\oplus\cdots\oplus\frsl(V_7),
\]
and
\[
\cL_{\bc}=V_{c_1}\otimes V_{c_2}\otimes V_{c_3}\otimes V_{c_4}
\]
as for $E_8$. Again, this is refined to a grading $\Gamma$ by $C\times\FF_2^2\simeq \ZZ_2^5$ using
the natural group homomorphism 
$\Psi\colon\SL(V_1)\times \cdots\times \SL(V_7)\longrightarrow \Aut(\cL)$ 
as in
\eqref{eq:Psi} and the automorphisms $\sigma,\tau$ as in \eqref{eq:sigma_tau}, but with only seven
components. The same arguments as for $E_8$ prove the following result.

\begin{lemma}\label{le:CF22}
Up to equivalence, $\Gamma$ is the special pure grading on $E_7$ with universal  group
 $\ZZ_2^5$.
\end{lemma}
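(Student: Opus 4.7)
The plan is to follow essentially the same strategy as in Lemma~\ref{le:HF22}, with the one structural difference that the simplex code $C$ does not contain the all-ones word, so every nonzero component of the $C$-grading $\underline\Gamma$ on $\cL$ is of the form $V_{c_1}\otimes V_{c_2}\otimes V_{c_3}\otimes V_{c_4}$ of dimension $16$, and there is no analogue of the zero component at $\buno$ that appeared for $E_8$.

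First, I would check that $\Gamma$ is special and pure. The automorphisms $\sigma$ and $\tau$ in \eqref{eq:sigma_tau} (now built from seven copies of the matrices
$\left(\begin{smallmatrix}0&1\\-1&0\end{smallmatrix}\right)$ and
$\left(\begin{smallmatrix}\bi&0\\0&-\bi\end{smallmatrix}\right)$) still commute and square to the identity, since $\Psi\bigl((-\id,\ldots,-\id)\bigr)=\id$ factors through $\Aut(\cL)$. On each summand $\frsl(V_i)\subseteq\cL_{\bcero}$, the simultaneous eigenspace decomposition under $\sigma$ and $\tau$ produces the three one-dimensional Cartan subalgebras spanned, respectively, by $\left(\begin{smallmatrix}0&1\\-1&0\end{smallmatrix}\right)$, $\left(\begin{smallmatrix}\bi&0\\0&-\bi\end{smallmatrix}\right)$, $\left(\begin{smallmatrix}0&\bi\\\bi&0\end{smallmatrix}\right)$, while the $(+1,+1)$-eigenspace is zero. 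Summing over $i=1,\ldots,7$ gives $\cL_{(\bcero,0,0)}=0$ and three homogeneous components $\cL_{(\bcero,1,0)}$, $\cL_{(\bcero,0,1)}$, $\cL_{(\bcero,1,1)}$, each of dimension $7$ and consisting of pairwise commuting semisimple elements. Since the rank of $E_7$ is $7$ and these sit inside the reductive subalgebra $\cL_{\bcero}$, each of them is a Cartan subalgebra of $\cL$, so $\Gamma$ is both special and pure.

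Next, for each nonzero $\bc\in C$, the component $\cL_{\bc}=V_{c_1}\otimes V_{c_2}\otimes V_{c_3}\otimes V_{c_4}$ of dimension $16$ decomposes under the commuting involutions $\sigma,\tau$ into four four-dimensional homogeneous components indexed by $\FF_2^2$, exactly as in the $E_8$ case. Counting gives $7\times 4=28$ components of dimension $4$ together with the three seven-dimensional Cartan components, so the type of $\Gamma$ is $(0,0,0,28,0,0,3)$, with grading group $C\times\FF_2^2\simeq\ZZ_2^5$.

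Finally, I would invoke Theorem~\ref{th:E7}: up to equivalence there is a unique special pure grading on $E_7$ of this type, namely $\Gamma_{E_7}^5$, whose universal group is $\ZZ_2^5$. Since $\Gamma$ has grading group of order $2^5$ matching the universal one, it coincides with its universal realization and is equivalent to $\Gamma_{E_7}^5$. The only subtle point—and the place where care is needed—is confirming that the eigenspace computation on a single $\frsl(V_i)$ really forces the $(+1,+1)$-component to vanish and the other three to be one-dimensional; but this is a direct check on $2\times 2$ matrices identical to the one implicit in Lemma~\ref{le:HF22}.
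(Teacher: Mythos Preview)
Your proof is correct and follows essentially the same approach as the paper, which simply states that ``the same arguments as for $E_8$ prove the following result'' and relies on the eigenspace decomposition of each $\frsl(V_i)$ under $\sigma$ and $\tau$ together with the uniqueness in Theorem~\ref{th:E7}. Your final sentence about the grading group order matching the universal one is slightly superfluous---having computed the type $(0,0,0,28,0,0,3)$, Theorem~\ref{th:E7} already identifies $\Gamma$ with $\Gamma_{E_7}^5$ up to equivalence, and the universal group comes along automatically.
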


As in the case of $E_8$, we get that the short exact sequence in 
Proposition~\ref{pr:E_3Cartan} splits, and the Weyl group gets a nice description.

\begin{theorem}\label{th:E7_3Cartan}
Let $\cL$ be a simple Lie algebra of type $E_7$
over an algebraically closed field of characteristic $0$. Let $\Gamma:\cL=\bigoplus_{g\in G}\cL_g$ be
the unique, up to equivalence, special pure grading with universal group $G$ and with exactly three
 homogeneous components: $\cL_s$, $\cL_{s'}$ and $\cL_{s''}$, being 
Cartan subalgebras. Let $W(\Gamma)$ be its Weyl group,
and let $W'(\Gamma)$ be the pointwise stabilizer in $W(\Gamma)$ of 
$\{s,s',s''\}$:
\[
W'(\Gamma)=\{w\in W(\Gamma)\mid w(s)=s,\, w(s')=s',\, w(s'')=s''\}. 
\]
Then the short exact
sequence
\[
1\rightarrow W'(\Gamma)\hookrightarrow W(\Gamma)\rightarrow S_3\rightarrow 1,
\]
in Proposition~\ref{pr:E_3Cartan} splits. 

Moreover, consider the flag
\[
\cF:\quad 1\leq S\leq G,
\]
where $S=\langle s,s',s''\rangle=\langle g\in G\mid \cL_g\ \text{is a Cartan subalgebra}\rangle$. Then
the Weyl group is the stabilizer of $\cF$:
\[
W(\Gamma)=\Stab_{\Aut(G)}(\cF).
\]
\end{theorem}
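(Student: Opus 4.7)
The plan is to mirror the argument for Theorem~\ref{th:E8_3Cartan}, with the simplification that for $E_7$ there is no intermediate subgroup $K$: the type $(0,0,0,28,0,0,3)$ from the proof of Theorem~\ref{th:E7} shows that every nontrivial element of $G$ has a nonzero homogeneous component, so the subgroup $\langle g\in G\mid \cL_g=0\rangle$ is trivial and drops out of the flag. By Lemma~\ref{le:CF22}, I may identify $G$ with $C\times \FF_2^2$ and $\Gamma$ with the concrete $(C\times \FF_2^2)$-grading defined just before that lemma; under this identification the three homogeneous Cartan subalgebras are $\cL_{(\bcero,i,j)}$ for $(i,j)\ne(0,0)$, and $S=\langle s,s',s''\rangle=0\times\FF_2^2$. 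Since any element of $W(\Gamma)$ must permute the set of homogeneous Cartan subalgebras of $\cL$, it must stabilize $S$, which gives $W(\Gamma)\subseteq \Stab_{\Aut(G)}(\cF)$.

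The reverse inclusion I would establish by a counting argument, exactly as in the $E_8$ case. Relative to the decomposition $G=C\oplus \FF_2^2$, every element of $\Stab_{\Aut(G)}(\cF)$ has block form
\[
\left(\begin{array}{c|c} f & 0 \\ \hline \mu & g \end{array}\right)
\]
with $f\in\GL(C)$, $g\in\GL_2(\FF_2)$, and $\mu\in\Hom(C,\FF_2^2)$, so
\[
|\Stab_{\Aut(G)}(\cF)|=|\GL_3(\FF_2)|\cdot 2^{6}\cdot |\GL_2(\FF_2)|=168\cdot 64\cdot 6=2^{10}\cdot 3^2\cdot 7,
\]
which by Corollary~\ref{co:E_3Cartan} coincides with $|W(\Gamma)|$. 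Hence the inclusion is an equality, and $W(\Gamma)=\Stab_{\Aut(G)}(\cF)$.

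For the splitting of the short exact sequence in Proposition~\ref{pr:E_3Cartan}, I would take the subgroup of $\Stab_{\Aut(G)}(\cF)$ consisting of transformations with $f=\id$ and $\mu=0$: this is isomorphic to $\GL_2(\FF_2)\simeq S_3$, and by construction of the map $W(\Gamma)\to S_3$ (induced by the action on $\{s,s',s''\}\subseteq S=0\times\FF_2^2$) it projects isomorphically onto $S_3$, thereby providing a complement to $W'(\Gamma)$. The only genuine obstacle I anticipate is proving that every automorphism of $G$ stabilizing $\cF$ is actually realized in $W(\Gamma)$; the order-matching above bypasses this via cardinality comparison, but if one preferred an explicit lift, one could assemble it from the homomorphism $\Psi$ of \eqref{eq:Psi}, the permutations of $V_1,\ldots,V_7$ arising from $\Aut(C)\leq S_7$, and the automorphisms $\sigma,\tau$ of \eqref{eq:sigma_tau}. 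The counting route is considerably shorter, and the only point to double-check is that the three labels $s,s',s''$ really are permuted transitively by $W(\Gamma)$, which follows from Proposition~\ref{pr:E_3Cartan}.
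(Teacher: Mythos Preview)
Your proposal is correct and follows essentially the same approach as the paper: identify $G$ with $C\times\FF_2^2$ via Lemma~\ref{le:CF22}, observe that $W(\Gamma)$ must preserve $S=0\times\FF_2^2$ and hence lies in $\Stab_{\Aut(G)}(\cF)$, then match orders using Corollary~\ref{co:E_3Cartan} to get equality, and finally exhibit the complement $\{f=\id,\ \mu=0\}\simeq\GL_2(\FF_2)\simeq S_3$ to split the sequence. Your additional remarks about the absence of the intermediate subgroup $K$ and the optional explicit-lift alternative are accurate but not needed for the argument.
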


\begin{proof}
Identify $G$ with $C\times\FF_2^2$. Then the subgroup $S$ equals $\FF_2^2$, and is preserved
by  $W(\Gamma)$,  so $W(\Gamma)$ preserves the flag $\cF$, and hence
consists of transformations of the form
\begin{equation}\label{eq:Weyl_E7_3Cartan}
\left(
 \begin{array}{c|c}
f &0\\
\hline\\[-10pt]
\mu &g
\end{array}
\right), 
\end{equation}
with $f\in\GL(C)$, $g\in\GL_2(2)$, and $\mu\in \Hom(C,\FF_2^2)$. Again, easy computations give:
\[
\begin{split}
\left|\GL(C)\right|&=\left|\GL_3(2)\right|= 7\times 6\times 4=2^3\times 3\times 7,\\
\left|\Hom(C,\FF_2^2) \right|&=2^6,\\
\left|\GL_2(2) \right|&=3\times 2,
\end{split}
\]
so the order of the subgroup in \eqref{eq:Weyl_E7_3Cartan} is $2^{10}\times 3^2\times 7$, 
which coincides with  the order of $W(\Gamma)$ by 
Corollary~\ref{co:E_3Cartan}. 
Therefore, $W(\Gamma)$ is the group of all transformations in \eqref{eq:Weyl_E7_3Cartan}, and the result follows.
\end{proof}

\begin{remark}
By duality, the quadratic form $q_{H,s}$ of Theorem~\ref{th:WsG_flag} corresponds to the regular 
quadratic form induced by the restriction of $\hat{q}$ of Remark~\ref{re:involution_types} to the 
subgroup $\Diag(\Gamma)\cap\cT_2$ of $\cT_2$. As a result, the Weyl groups $W(\Gamma)$ for 
$\cL$ of types $E_6$ and $E_8$ could be computed using, respectively, Propositions 6.9 and 8.14 in \cite{Yu}.
\end{remark}

\bigskip

%%%%%%%%%%%%%%%%%%%%%%%%%%%%%%%%%%%%%%

\section{Classification of special pure gradings up to isomorphism}\label{se:isomorfismo}

Recall that any special grading 
$\Gamma: \cL=\bigoplus_{g\in G}\cL_g$ on a semisimple Lie algebra
over an algebraically closed field of characteristic $0$ is almost fine
(\cite[Proposition 3.8]{EK_almostfine}). If $U$ is the universal 
group of $\Gamma$, and we denote by $\Delta$  the grading $\Gamma$ when considered as a grading
over $U$, then there is a group homomorphism $\alpha\colon U\rightarrow G$ such that $\Gamma$ is the
coarsening ${}^\alpha\!\Delta$. This homomorphism $\alpha$ is bijective on the supports.

If now $\Gamma': \cL=\bigoplus_{g\in G}\cL'_g$ is another special $G$-grading equivalent to $\Gamma$,
then there is a group homomorphism $\beta\colon U\rightarrow G$, with $\Gamma'={}^\beta\!\Delta$. 
Then the gradings $\Gamma$ and $\Gamma'$ are isomorphic if and only if there is an element
$\omega$ in the Weyl group $W(\Delta)$ such that $\alpha=\beta\circ\omega$ 
(\cite[Theorem 4.3]{EK_almostfine}).

\smallskip

The goal of this section is the classification of the special pure gradings on the simple Lie algebras
of types $E_6$, $E_7$, and $E_8$, up to isomorphism.

This will be achieved by defining suitable invariants for these gradings 
(Definition~\ref{df:inv}).

\smallskip

Let $\Gamma: \cL=\bigoplus_{g\in G}\cL_g$ be a special pure grading on the simple Lie algebra $\cL$, of
type $E_6$, $E_7$, or $E_8$, over an algebraically closed field of characteristic $0$, but now we 
do not assume that $G$
is the universal group of the grading. Pick an element $s$ in the support of $\Gamma$ such that
$\cL_s$ is a Cartan subalgebra. Denote by $T$ the subgroup of $G$ generated by the support, which is
a $2$-group, as it is a quotient of the universal group, and pick a character $\chi$ of $T$ with 
$\chi(s)=-1$. Then $T=\ker\chi\times\langle s\rangle$.
Write $\oT=T/\langle s\rangle$, which is isomorphic to $\ker\chi$.

As in \eqref{eq:pi}, there is a surjective group homomorphism
\[
\pi\colon R\rightarrow \oT.
\]
The \emph{complementary lattice} $E\bydef \ker\pi$ contains $2R$. The grading $\Gamma$ being special,
we have $E\cap\Phi=\emptyset$ as in Proposition~\ref{pr:Phi_cap_E}. As in \eqref{eq:GRE2} we conclude
that $T$ is isomorphic to $R/E\times\FF_2$.

Write, as usual, $\oR=R/2R$ and
$\oE=E/2R$, and consider the induced linear map $\opi\colon \oR\rightarrow \oT$, and the 
quadratic form $q\colon\oR\rightarrow \FF_2$ in \eqref{eq:q}.

\begin{lemma}\label{le:E7exception}
Unless $\cL$ is of type $E_7$ and $\Gamma$ is equivalent to the special pure grading 
$\Gamma_{E_7}^8$ (Theorem~\ref{th:E7}), the subgroup $T$ is the universal group of $\Gamma$.
\end{lemma}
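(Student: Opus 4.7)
My plan is to show that the natural surjection from the universal group $U$ of $\Gamma$ onto $T$ is an isomorphism in every non-exceptional case. Since $T$ is generated by the image of the support, this surjection is automatic; it is an isomorphism exactly when its kernel $K\leq U$ is trivial. For $T=U/K$ to still carry $\Gamma$ with the same homogeneous components, distinct support elements cannot be merged, which is equivalent to $K\cap(S-S)=\{0\}$, where $S$ denotes the support of $\Gamma$ viewed as graded by $U$. Because $U$ is a $2$-elementary group, $S-S=S+S$, so it suffices to show $S+S=U$.

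Write $U\simeq R/E^{\mathrm{u}}\times\FF_2$ with $E^{\mathrm{u}}$ the universal complementary lattice and $\oE^{\mathrm{u}}=E^{\mathrm{u}}/2R$. By \eqref{eq:homogeneousGamma},
\[
S=\{(\bar\alpha+\oE^{\mathrm{u}},\,i):\alpha\in\Phi^+,\,i\in\FF_2\}\cup\{(0,1)\}.
\]
A direct computation gives $S+S=\{(v,k)\in U:v\in\Sigma+\oE^{\mathrm{u}}\}$, where $\Sigma\bydef\{\bar\alpha+\bar\beta:\alpha,\beta\in\Phi^+\}\subseteq\oR$; this uses that $\bar{\Phi^+}\subseteq\Sigma$, which holds because every $\bar\alpha$ can be written as $\bar\beta+\bar\gamma$ with $\beta\neq\gamma$ positive roots (for a non-simple root take $\alpha=\beta+\gamma\in\Phi^+$, while for a simple $\alpha$ one may take $\beta=\alpha+\alpha_j$ and $\gamma=\alpha_j$ for a simple root $\alpha_j$ adjacent to $\alpha$ in the Dynkin diagram).

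The heart of the argument is to identify $\Sigma$: I claim $\Sigma=\oR$ for $\cL$ of type $E_6$ or $E_8$, and $\Sigma=\oR\setminus\{a\}$ for $\cL$ of type $E_7$. Since $\Sigma$ is stable under the image of the Weyl group in $\GL(\oR)$, which is $\Ort(q)$ by Proposition~\ref{pr:CartanDieudonne}, $\Sigma$ is a union of $\Ort(q)$-orbits. These orbits on $\oR$ are $\{0\}$, the orbit of nonisotropic root-images $\{\bar\alpha:\alpha\in\Phi^+\}$ (by Weyl transitivity on roots), the orbit of all nonzero isotropic vectors (by the classical transitivity of $\Ort(q)$ on nonzero isotropic vectors of a regular quadratic form of positive Witt index; for $E_7$ this uses the isomorphism $\Ort(q)\cong\SP_6(\FF_2)$ so that the symplectic group acts transitively on nonzero vectors of a complement to the radical), and, only for $E_7$, the singleton $\{a\}$ fixed by $\Ort(q)$. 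Each non-radical orbit meets $\Sigma$ (via $0=\bar\alpha+\bar\alpha$, via $\bar\gamma=\bar\alpha+\bar\beta$ whenever $\gamma=\alpha+\beta$ is a sum of two distinct positive roots, and via the sum of two orthogonal positive roots, which always exist in these types), while $a\notin\Sigma$ in $E_7$ by the observation in the proof of Theorem~\ref{th:E7} that $\bar\alpha-\bar\beta\neq a$ for distinct positive roots.

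To finish, I would check that $\Sigma+\oE^{\mathrm{u}}=\oR$ in every non-exceptional case. This is immediate for $E_6$ and $E_8$ since $\Sigma=\oR$. For $E_7$ with $\dim\oE^{\mathrm{u}}\ge 1$, any nonzero $u\in\oE^{\mathrm{u}}$ satisfies $u\neq a$ (as $\oE^{\mathrm{u}}$ is totally isotropic while $a$ is nonisotropic), hence $a+u\in\oR\setminus\{a\}=\Sigma$, so the coset $a+\oE^{\mathrm{u}}$ is still represented in the image. Only when $\cL$ is of type $E_7$ and $\oE^{\mathrm{u}}=0$, i.e.~$\Gamma\simeq\Gamma_{E_7}^8$, does $a$ remain isolated, producing a nontrivial $K=\langle(a,0)\rangle$ and hence $T\subsetneq U$. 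The main obstacle is the orbit identification for $\Sigma$, and in particular the transitivity of $\Ort(q)$ on nonzero isotropic vectors, where the radical of $b_q$ enters essentially in the $E_7$ case.
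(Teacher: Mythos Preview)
Your proof is correct for the lemma as stated, and it takes a genuinely different route from the paper's argument. The paper works with the complementary lattice $E$ associated to $T$ (not to the universal group), observes that $E\cap\Phi=\emptyset$ forces $\oE$ to be totally isotropic except when $\cL$ is of type $E_7$ and $\oE=\rad b_q$, and then appeals (somewhat implicitly) to the characterization $E=E^\circ$ from \eqref{eq:Ecirc} to conclude that $R/E\times\FF_2$ is already the universal group. Your argument instead works directly inside $U$, shows that the kernel $K$ of the canonical surjection $U\to T$ must avoid $S+S$, and then computes $S+S$ via the $\Ort(q)$-orbit description of the set $\Sigma=\{\bar\alpha+\bar\beta:\alpha,\beta\in\Phi^+\}$. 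Both proofs ultimately rest on the same combinatorial fact---every nonzero isotropic vector (and, for $E_7$, every vector other than $a$) is a sum $\bar\alpha+\bar\beta$---but you make this explicit, whereas the paper leaves it to the reader. Your approach has the advantage of being self-contained and not depending on the universal-group interpretation of $E^\circ$.

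One small overstatement: in the final sentence you assert that the exceptional case \emph{produces} a nontrivial $K=\langle(a,0)\rangle$ and hence $T\subsetneq U$. In fact $K$ depends on the ambient group $G$: your computation only shows $K\subseteq\{0,(a,0),(a,1)\}$, so $K$ may be trivial, $\langle(a,0)\rangle$, or $\langle(a,1)\rangle$ depending on how $\Gamma$ sits in $G$. This does not affect the lemma, which is a one-way implication, but you should not claim $T\subsetneq U$ unconditionally in that case. Also, your justification of transitivity on nonzero isotropic vectors for $E_7$ is slightly imprecise: the point is that if $\sigma\in\Ort(q)$ satisfies $\sigma(x)\equiv y\pmod{\rad b_q}$, then $\sigma(x)\in\{y,y+a\}$, and $q(y+a)=1\neq 0=q(\sigma(x))$ forces $\sigma(x)=y$.
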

\begin{proof}
From $E\cap\Phi=\emptyset$ we conclude, using the arguments previous to 
Proposition~\ref{pr:CartanDieudonne}, that either $\oE$ is a totally isotropic subspace of $\oR$, or
$\cL$ is of type $E_7$ and $\oE=\rad b_q$.

In the first case, $T$ is the universal group, as so is $R/E\times\FF_2$. In the second case,
$\Gamma$ is equivalent to $\Gamma_{E_7}^8$ in Theorem~\ref{th:E7}, and its
 universal group is $\oR\times\FF_2$, while $T$ is isomorphic to its quotient 
$\oR/\rad(b_q)\times\FF_2$.
\end{proof}

\begin{definition}\label{df:inv}
Let $\Gamma: \cL=\bigoplus_{g\in G}\cL_g$ be a special pure grading on the simple Lie algebra $\cL$ of
type $E_6$, $E_7$, or $E_8$, over an algebraically closed field of characteristic $0$. Define the sequence 
$\Inv(\Gamma)$
as follows:
\setlength{\leftmargini}{25pt}
\begin{romanenumerate}
\item 
If $\Gamma$ is equivalent to $\Gamma_{E_8}^5$ in Theorem~\ref{th:E8}, then 
\[
\Inv(\Gamma)=(T),
\]
with $T$ the subgroup of $G$ generated by the support of $\Gamma$.

\item
If $\Gamma$ is equivalent to $\Gamma_{E_8}^6$ in Theorem~\ref{th:E8}, then 
\[
\Inv(\Gamma)=(T,K,S),
\]
with $T=\langle\supp\Gamma\rangle$ as before, 
$S=\langle t\in T\mid \cL_t\ \text{is a Cartan subalgebra of $\cL$}\rangle$, and
$K=\langle t\in T\mid \cL_t=0\rangle$.

\item
If $\Gamma$ is equivalent to $\Gamma_{E_7}^5$ in Theorem~\ref{th:E7}, then
\[
\Inv(\Gamma)=(T,S),
\]
with $T=\langle\supp\Gamma\rangle$ and 
$S=\langle t\in T\mid \cL_t\ \text{is a Cartan subalgebra of $\cL$}\rangle$.

\item
If $\Gamma$ is equivalent to $\Gamma_{E_7}^8$ in Theorem~\ref{th:E7}, but the subgroup
$T=\langle\supp\Gamma\rangle$ is not the universal group (in other words, $T$ is isomorphic to
$\ZZ_2^7$), then
\[
\Inv(\Gamma)=(T,S,b_{T,S}),
\]
with $T=\langle\supp\Gamma\rangle$,
 $S=\langle t\in T\mid \cL_t\ \text{is a Cartan subalgebra of $\cL$}\rangle\,(\simeq \ZZ_2)$, 
$\oT=T/S$,
and $b_{T,S}\colon \oT\times\oT\rightarrow \FF_2$ the nondegenerate alternating bilinear form
defined as follows: the polar form $b_q$ of the regular quadratic form $q\colon\oR\rightarrow \FF_2$
induces a nondegenerate alternating bilinear form 
$\bar b_{q}\colon \oR/\rad(b_q)\times \oR/\rad(b_q)\rightarrow \FF_2$, and $b_{T,S}$ is the induced
form on $\oT$:
\[
b_{T,S}\bigl(\opi(x),\opi(y)\bigr)\bydef \bar b_q(x+\rad b_q,y+\rad b_q)= b_q(x,y),
\]
for any $x,y\in\oR$.

\item
In the remaining cases, so that $\Gamma$ is equivalent to either $\Gamma_{E_8}^r$, $r=7,8,9$,
in Theorem~\ref{th:E8}, or
$\Gamma_{E_7}^r$, $r=6,7$, in Theorem~\ref{th:E7}, or $\Gamma_{E_7}^8$ with 
$\langle\supp\Gamma\rangle\simeq \ZZ_2^8$, or $\Gamma_{E_6}^r$, $r=5,6,7$, in Theorem~\ref{th:E6},
then
\[
\Inv(\Gamma)=(T,H,S,q_{H,S}),
\]
with $T=\langle\supp\Gamma\rangle$, 
$S=\langle t\in T\mid \cL_t\ \text{is a Cartan subalgebra of $\cL$}\rangle$ $(\simeq \ZZ_2)$, $\oT=T/S$,
$H$ the subgroup of $T$ such that $\oH=H/S=\opi(\oE^\perp)$, where $E$ is the complementary lattice,
and $q_{H,S}\colon \oH\rightarrow \FF_2$ the regular quadratic form transferred from 
$q\vert_{\oE^\perp}$ by means of $\opi$:
$
q_{H,S}\bigl(\opi(x)\bigr)=q(x)
$,
for any $x\in \oE^\perp$.
\end{romanenumerate}
\end{definition}

The classification of special pure gradings up to isomorphism is now easy:

\begin{theorem}\label{th:iso}
Let $\Gamma: \cL=\bigoplus_{g\in G}\cL_g$ and $\Gamma': \cL=\bigoplus_{g\in G}\cL'_g$ be two 
special pure gradings on the simple Lie algebra $\cL$ of
type $E_6$, $E_7$, or $E_8$, over an algebraically closed field of characteristic $0$. 
Then $\Gamma$ is isomorphic to 
$\Gamma'$ if and only if $\Inv(\Gamma)=\Inv(\Gamma')$.
\end{theorem}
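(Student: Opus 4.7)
The plan is to argue both directions of the equivalence, with the \emph{only if} direction being a verification that $\Inv(\Gamma)$ depends only on the $G$-graded isomorphism class of $\Gamma$, and the \emph{if} direction using the orbit criterion of \cite[Theorem~4.3]{EK_almostfine} together with the descriptions of the Weyl groups obtained in Section~\ref{se:WeylSpecialPureE}.

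For the \emph{only if} direction, a graded isomorphism $\varphi\in\Aut(\cL)$ from $\Gamma$ to $\Gamma'$ preserves the support, the set of indices of Cartan-subalgebra components, and the set of indices of zero components, so it carries $T$, $S$ and $K$ to the corresponding subgroups for $\Gamma'$. The forms $b_{T,S}$ and $q_{H,S}$ are induced, via the projection $\opi$, from the canonical quadratic form $q\colon\oR\to\FF_2$ of Section~\ref{subse_quadratic}, which is well-defined up to $\Aut(\cL)$-conjugation of Cartan subalgebras. Combined with Proposition~\ref{pr:Phi_cap_E} and the description of the homogeneous components in~\eqref{eq:homogeneousGamma}, this shows that the complementary lattice $E$, the subgroup $H$, and the form itself are recoverable from the $G$-graded structure alone, hence preserved by $\varphi$.

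For the \emph{if} direction, the equality of the first component $T$ of $\Inv$, together with the extra data carried by the invariant when present, places $\Gamma$ and $\Gamma'$ in the same equivalence class, by Theorems~\ref{th:E8}, \ref{th:E7}, \ref{th:E6}. Let $\Delta$ be the representative $U$-grading in that class, with $U$ its universal group, and write $\Gamma={}^\alpha\Delta$ and $\Gamma'={}^\beta\Delta$ for homomorphisms $\alpha,\beta\colon U\to G$. By \cite[Theorem~4.3]{EK_almostfine} it suffices to produce $\omega\in W(\Delta)$ with $\alpha=\beta\circ\omega$, and I would handle each case of Definition~\ref{df:inv} in turn: in (i) one has $W(\Delta)=\Aut(U)\simeq\GL_5(\FF_2)$ by \cite[6.2~Theorem]{Hesselink}, so any $\omega\in\Aut(U)$ mapping $\ker\alpha$ to $\ker\beta$ and descending to the correct isomorphism on the quotient is admissible; in (ii) and (iii), Theorems~\ref{th:E8_3Cartan} and \ref{th:E7_3Cartan} identify $W(\Delta)$ as the stabilizer in $\Aut(U)$ of the flag formed by the subgroups generated by Cartan-subalgebra (and zero-component) labels, and the agreement of $S$ and $K$ provides the required $\omega$; in (iv) and (v), Theorem~\ref{th:WsG_flag} identifies $W(\Delta)$ as the subgroup of $\Stab_{\Aut(U)}(\cF)$ preserving the appropriate regular form on the subquotient, and the equality of $q_{H,S}$ (or $b_{T,S}$) yields $\omega$ via Witt's extension theorem applied inside $U$.

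The main obstacle is case (iv): by Lemma~\ref{le:E7exception}, the universal group $U\simeq\ZZ_2^8$ strictly contains $T\simeq\ZZ_2^7$, so both $\alpha$ and $\beta$ have a one-dimensional kernel inside $U$. These kernels correspond to the canonical line $\rad b_q$, which is fixed by $W(\Delta)$ because the latter preserves $q$; the induced nondegenerate alternating form on $\oR/\rad b_q$ pushes forward along $\opi$ to $b_{T,S}$, and matching $b_{T,S}$ for $\Gamma$ and $\Gamma'$ by an element of $W(\Delta)$ is precisely the content of Witt's theorem for the induced symplectic form, combined with the freedom to choose the affine component predicted by Theorem~\ref{th:WsG_flag}.
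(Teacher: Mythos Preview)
Your overall strategy is the same as the paper's, and cases (i), (ii), (iii), (v) are handled correctly. In case~(v) you do not actually need Witt: since $\alpha,\beta\colon U\to T$ are isomorphisms (Lemma~\ref{le:E7exception}) carrying the canonical flag and the form $q_{H_U,s_U}$ on $U$ to the same data $(T,H,S,q_{H,S})$, the composite $\beta^{-1}\circ\alpha$ lies in $W_s(\Delta)=W(\Delta)$ directly by Theorem~\ref{th:WsG_flag}. This is exactly the ``remaining cases are easy'' line in the paper.

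There is, however, a genuine gap in case~(iv). You invoke Witt's extension theorem for the symplectic form $b_{T,S}$, but that only produces an element of $\SP(\oR/\rad b_q)$; what you need is an element $\varphi\in\Ort(\oR,q)$, because by Remark~\ref{re:O(q_G)} the Weyl group $W(\Delta)$ for $\Delta=\Gamma_{E_7}^8$ consists of block matrices whose upper-left corner lies in $\Ort(q)$, not merely $\SP(b_q)$. The key fact---used explicitly in the paper's proof and recalled in Section~\ref{subse_quadratic}---is that for an odd-dimensional regular quadratic space $(V,Q)$ over a perfect field of characteristic~$2$, the projection $\Ort(V,Q)\to\SP(W,B_Q\vert_W)$ onto a complement $W$ of $\rad B_Q$ is an \emph{isomorphism}. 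This is what lifts the symplectic map $\overline\beta^{-1}\circ\overline\alpha$ (on $\oR/\rad b_q$) to the required $\varphi\in\Ort(\oR,q)$, after which the affine freedom in Theorem~\ref{th:WsG_flag} fixes the $\FF_2$-component. A related imprecision: the kernels of $\alpha$ and $\beta$ in $U=\oR\times\FF_2$ need not literally be $\rad b_q\times\{0\}$---they could equally be $\{0,(a,1)\}$---and it is only after passing to $\oR=U/\langle s_U\rangle$ that both kernels become $\rad b_q$; the affine component absorbs this ambiguity, but your phrasing obscures the point.
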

\begin{proof} 
It is clear that if $\Gamma$ and $\Gamma'$ are isomorphic, then $\Inv(\Gamma)=\Inv(\Gamma')$.

For the converse, 
let us start with a simple case. Assume that $\Gamma$ and $\Gamma'$ are equivalent to 
$\Gamma_{E_8}^6$ and $\Inv(\Gamma)=\Inv(\Gamma')=(T,K,S)$. As in Theorem~\ref{th:E8_3Cartan}, the
universal group of $\Gamma_{E_8}^6$ is $U=H\times\FF_2^2$, and let
$\alpha,\beta\colon U\rightarrow G$ be the corresponding group homomorphisms
so that $\Gamma={}^\alpha(\Gamma_{E_8}^6)$ and 
$\Gamma'={}^\beta(\Gamma_{E_8}^6)$. Then $\alpha$ and $\beta$ are 
isomorphisms when considered as maps 
$U\rightarrow T=\langle\supp\Gamma\rangle=\langle\supp\Gamma'\rangle$.
Then the composition $\beta^{-1}\circ\alpha\colon U\rightarrow U$ makes sense and it is an 
automorphism that preserves
the flag in Theorem~\ref{th:E8_3Cartan}. Therefore, $\beta^{-1}\circ\alpha$ belongs to the Weyl group
$W(\Gamma_{E_8}^6)$. It follows that $\alpha=\beta\circ\omega$ for an
element $\omega\in W(\Gamma_{E_8}^6)$, proving that $\Gamma$ and $\Gamma'$ are isomorphic.

\smallskip

Consider now the most problematic case, in which $\Gamma$ and $\Gamma'$ are equivalent to 
$\Gamma_{E_7}^8$, with $\Inv(\Gamma)=\Inv(\Gamma')=(T,S, b_{T,S})$ and
$T\simeq \ZZ_2^7$. The universal group of $\Gamma_{E_7}^8$ is $\oR\times\FF_2$ and let 
$\alpha,\beta\colon\oR\times\FF_2\rightarrow G$ be the corresponding group homomorphisms. These induce
surjective  group homomorphisms $\overline{\alpha},\overline{\beta}\colon \oR\rightarrow \oT=T/S$.

Take a complementary space $\oW$ of $\rad(b_q)$: $\oR=\rad(b_q)\oplus\oW$. Then 
$\overline{\alpha}$ and $\overline{\beta}$ give isomorphisms $\oW\rightarrow \oT$ and for any
$w_1,w_2\in \oW$, 
$b_q(w_1,w_2)=b_{T,S}\bigl(\overline{\alpha}(w_1),\overline{\alpha}(w_2)\bigr)
=b_{T,S}\bigl(\overline{\beta}(w_1),\overline{\beta}(w_2)\bigr)$, so that 
$\overline{\beta}^{-1}\overline{\alpha}$ lies in the symplectic group of $b_q\vert_{\oW}$.

But if $V$ is an odd dimensional vector space over $\FF_2$ endowed with a 
regular quadratic form $Q$, $W$ is a complementary subspace of $\rad(b_Q)$ and
 $A\in\SP(W,b_Q\vert_W)$, then there is a unique element $B\in\Ort(V,Q)$, such that 
$B(w)-A(w)\in\rad(b_Q)$ for any $w\in W$. (Actually, we have $\SP(W,b_Q\vert_W)\simeq\Ort(V,Q)$.)
Indeed, it is enough to define, with $\rad(b_Q)=\FF_2 a$, $B(a)=a$ and $B(w)=A(w)+\lambda(w)a$,
for a linear form $\lambda$. Then $B$ is an isometry if and only if 
$Q(w)=Q\bigl(B(w)\bigr)=Q\bigl(A(w)+\lambda(w)a\bigr)=Q\bigl(A(w)\bigr)+\lambda(w)^2=
Q\bigl(A(w)\bigr)+\lambda(w)$ for any $w\in W$, if and only if $\lambda(w)=Q\bigl(A(w)\bigr)+q(w)$
for any $w\in W$ (this is linear because $A\in\SP(W,b_Q\vert_W)$!).

Hence there is  $\varphi\in\Ort(\oR,q)$ such that 
$\varphi(x)+\overline{\beta}^{-1}\overline{\alpha}(x)\in \rad(b_q)$, and then $\alpha=\beta\circ\omega$
for an element $\omega\in W(\Gamma_{E_7}^8)$ of the form 
\[
\left(\begin{array}{c|c} \varphi &0\\
\hline\\[-10pt] \epsilon&1\end{array}\right)
\] 
as in \eqref{eq:WsGamma2}, and thus $\Gamma$ and $\Gamma'$ are isomorphic. 

\smallskip

The remaining cases are easy because, as for $\Gamma_{E_8}^6$, the homomorphisms 
$\alpha,\beta$ are bijections from the
universal group of the almost fine grading to the invariant $T$ of $\Gamma$ and $\Gamma'$.
\end{proof}

%-------------------------------------------

\bigskip

\end{document}